\def\secondcircle{(0:3.5cm) circle (1.9cm)}
\def\thirdcircle{(-5:6.5cm) circle (0.7cm)}
\def\fourcircle{(0:4.5cm) circle (0.5cm)}
\begin{document}

\title{Fuzzy implication functions constructed from general overlap functions and fuzzy negations
}


\author{Jocivania Pinheiro \and Benjamin Bedregal \and Regivan H.N. Santiago \and Helida Santos$^\ast$ \and Gra\c{c}aliz P. Dimuro \and Humberto Bustince}


\institute{J. Pinheiro \at
             Departamento de Ci\^encias Naturais, Matem\'atica e Estat\'istica,  Universidade Federal Rural do Semi-\'Arido, Mossor\'o-RN, Brazil \\
              \email{vaniamat@ufersa.edu.br}           
           \and
          B. Bedregal, R. Santiago \at
         Departamento de Inform\'atica e Matem\'atica Aplicada, Universidade Federal do Rio Grande
         do Norte, Natal-RN, Brazil \\
          \email{ \{bedregal,regivan\}@dimap.ufrn.br}       \and 
         H. Santos$^\ast$, G. Dimuro \at
          Centro de Ci\^encias Computacionais,  Universidade Federal do Rio Grande, Rio Grande-RS, Brazil \\
          \email{\{helida,gracalizdimuro\}@furg.br} 
         \and
          H. Bustince \at
         Departamento de Estad\'istica, Inform\'atica y Matem\'aticas, Institute of Smart Cities, Universidad Publica de Navarra, Pamplona-NA, Spain \\
         \email{bustince@unavarra.es} }

\maketitle

\begin{abstract}
		
Fuzzy implication functions have been widely investigated, both in theoretical and practical fields.  The aim of this work is to continue previous works related to fuzzy implications constructed by means of  non necessarily associative  aggregation functions. In order to obtain a more general and flexible context, we extend the class of implications derived by fuzzy negations and  t-norms, replacing the latter by general overlap functions. We also investigate their properties, characterization and intersections with other classes of fuzzy implication functions.
\keywords{Implication functions \and Aggregation functions \and General overlap functions \and Overlap functions \and Grouping functions}
\end{abstract}

\section{Introduction}
\label{intro}
It is undeniable the importance of constructing implication functions from classes of aggregation functions that extend the classic Boolean disjunction to the unit interval, given the numerous studies found in the literature. These functions have been investigated in many ways, including studies with a more theoretical point of view and the ones dealing with practical applications, as seen in \cite{Bac13,BJ08,BBBP13,BJ2015,Mas07}, or in fields like approximate reasoning, decision making, expert systems, image processing, and fuzzy mathematical morphology found in \cite{Bloch09,BFSBM13,CBS18,jay08,Pradera16,RBB13,SdB13,Yager04}. There exists a wide range of methods to construct fuzzy implication functions as there are many ways of obtaining aggregation functions. In \cite{Bedregal07}, it was introduced  a family of fuzzy implications constructed from a t-norm $T$ and a fuzzy negation $N$. Recently, \cite{PBSS17,Vania-Nafips18,PBSS18,Vania-Fuzz18}  revisited this class of implication functions, calling them as $(T,N)$-implications. In those works some properties were investigated, including the definition of fuzzy subsethood measures by means of these fuzzy implication functions.

Initially, the investigations mostly used t-norms and t-conorms as in \cite{DP84}, however  operators  different from them can be used to construct implication-like functions, namely, uninorms or semi-uninorms by \cite{LIU12,Xie2012209}, pseudo-t-norms in \cite{Liu2011783,WANG06}, (dual) copulas, quasi- (semi-) copulas given by \cite{carbonell10,Dolati13}, overlap and grouping functions studied in \cite{Dimuro2015,DBS14,Dimuro2017-ql,d-impl}.

Regarding the studies related to fuzzy implications constructed from weaker operators, we highlight the ones considering non-necessarily associative aggregation operators in the definition of fuzzy implications, like
overlap and grouping functions. These functions were proposed by \cite{Bus10a,BPMHF12} within the scope of classification problems in which the separation of classes could be unclear. Later, it was observed that whenever there exist more than two classes, it was more suitable to deal with an aggregation function which accepted more than two inputs. Then, $n$-Dimensional overlap functions were introduced in \cite{Gomez201657}. Subsequently, \cite{DeMiguel2019} generalized that concept to deal with problems containing more than two classes defining the general overlap functions.

Note that, in classical logic, one can define the implication connective  in distinct ways, meaning that if the truth tables are equal, then the operators are equivalent, \cite{Mendelson15}. However, when one generalizes those equivalences to the unit interval $[0,1]$, different classes of fuzzy implication functions are obtained. For example, when we generalize the $\vee$ operator and replace it by the grouping function $G$, the $\wedge$ operator by the overlap function $O$ and $\neg$  by a fuzzy negation $N$, we can mention $(G,N)$-implication functions, by  \cite{DBS14}, which generalizes the material implication used in Kleene algebra that can be defined according to the tautology:
	\begin{equation}\label{eq-taut3a}
		p \rightarrow q \equiv \neg p \vee q.
	\end{equation}

Later, in \cite{Dimuro2015}, $R_{o}$-implication functions were proposed. They are implications given by means of overlap functions inspired on the generalization of Boolean implications resulted as the residuum of the conjunction of Heyting algebra considered in the intuitionistic logic and defined according to the identity: $A' \cup B = (A - B)'=\bigcup \{C \in X \colon(A \cap C) \subseteq B \}$, where $X$ is a universe set and $A,B \subseteq X$. Moreover, in \cite{Dimuro2017-ql}, the  implication functions defined in the quantum logic framework, were also generalized using the following tautology: $p \rightarrow q \equiv \neg p \vee (p \wedge q)$, known as $QL$-implication functions. And there is also the study on $D$-implication functions, given in \cite{d-impl} (also known as Dishkant implication), derived from the following generalization: $p \rightarrow q \equiv q \vee (\neg p \wedge \neg q)$.

The natural sequence of the study of fuzzy implication functions derived by overlap and grouping functions should consider the tautology: 
\begin{equation}\label{eq-taut3}
p \rightarrow q \equiv \neg (p \wedge \neg q)
\end{equation}
which was recently generalized by t-norms, and called $(T,N)$ -implication functions. However, a more general and flexible context may be obtained if one considers generalized overlap functions instead of the standard overlap functions. 

The aim of this paper is to introduce a new  family of fuzzy implications generalizing Eq. (\ref{eq-taut3}) to $[0,1]$, entitled by $(\mathcal{GO},N)$-implications, where $\mathcal{GO}$ is the set of general overlap functions and $N$ is a fuzzy negation. 
We study the properties satisfied by such implication functions, providing a characterization and studying the intersections between them and the other families of implication functions endowed with overlap and grouping functions.

The paper is organized as follows. Section 2 includes some definitions and concepts necessary for the development of our work. In Sections 3 and 4 the main contributions concerning $(\mathcal{GO},N)$-implication functions and intersections between families of fuzzy implications are provided. Finally, in Section 5 we address the final remarks and a brief outline on future works.
 
\section{Preliminary concepts}
\subsection{Fuzzy Negations}

Fuzzy negations have been widely investigated and the main notions and properties related to them can be seen in \cite{BJ08,Bedregal08,DBS14,Klir95,Tri79}.

\begin{definition} \label{Negacao} \cite{DBS14} 
	A mapping $N: [0,1] \rightarrow [0,1]$ is said to be a \textbf{fuzzy negation} if
	\begin{enumerate}[labelindent=\parindent, leftmargin=*,label=\normalfont{(N\arabic*)}]
		\item $N$ is antitonic, i.e. $ N(x) \leq N(y) $ if $y \leq x $;\label{N1}
		\item $ N(0) = 1 $ and $ N(1) = 0 $. \label{N2}\\
		
		A fuzzy negation $N$ is \textbf{strict} if
		\item $ N$ is continuous and \label{N3}
		\item $ N(x) < N(y) $ whenever $y < x $. \label{N4}\\
		
		A fuzzy negation $N$ is \textbf{strong} if
		\item $ N(N(x)) = x $, for each $ x \in [0,1]$. \label{N5}\\
		
		A fuzzy negation $N$ is \textbf{crisp} if
		\item $ N(x) \in \{0,1\} $, for all $ x \in [0,1]$. \label{N6}\\
		
		A fuzzy negation $N$ is \textbf{frontier} if it satisfies:
		\item $N(x) \in \{0,1\}$ if and only if $x=0$ or $x=1$.\label{N7}
		
	\end{enumerate}
	
\end{definition}

The standard (or Zadeh) negation is: $N_Z(x)\hspace{-0.05cm}=\hspace{-0.05cm} 1-x$.

\begin{remark}\label{rem-N-crisp}
	By \cite{Dimuro2017-ql}, a fuzzy negation $N: [0, 1] \rightarrow [0,1]$ is crisp if and only if there exists
	$\alpha \in \left[ 0,1 \right) $ such that $N = N_\alpha$ or there exists $\alpha \in \left( 0,1 \right] $ such that $N = N^\alpha$, where
	\begin{eqnarray*}
		N_{\alpha}(x) = \begin{cases}
			0, & \text{ if }  x > \alpha \\
			1, & \text{ if }  x \leq \alpha
		\end{cases}
	\text{ and }
		N^{\alpha}(x) = \begin{cases}
			0, & \text{ if }  x \geq \alpha \\
			1, & \text{ if } x < \alpha
		\end{cases}.
	\end{eqnarray*}	
\end{remark}

As examples of crisp fuzzy negations, we have  the smallest fuzzy negation $N_{\bot}$ and the greatest fuzzy negation $N_{\top}$, which are given by $N_{\bot} = N_0$ and $N_{\top} = N^{1}$, respectively.

In our subsequent developments, the notion of $N$-duality is going to play a very relevant role.

\begin{definition} \label{def_Ndual}
	Let $N$ be a fuzzy negation and $f:[0,1]^{n} \rightarrow [0,1]$ be any fusion function. The $\mathbf{N}$\textbf{-dual function} of $f$, for all $x_1, \ldots, x_n \in [0,1]$, is given by the expression:
	\begin{eqnarray}\label{eq-f_N}
	f_{N}(x_1, \ldots, x_n) = N(f(N(x_1), \ldots, N(x_n))).
	\end{eqnarray}
\end{definition}

\subsection{From Aggregation functions to General Overlap Functions}

\begin{definition} \cite{Beliakov:Aggregations}
	An $n$-ary \textbf{aggregation function} is a mapping $A: [0,1]^{n} \rightarrow [0,1]$ satisfying the following properties:
	\begin{enumerate}[labelindent=\parindent,leftmargin=*,label=\normalfont{(A\arabic*)}]
		\item $A(0, 0, \ldots, 0) = 0$ and $A(1, 1, \ldots, 1) = 1$;
		\item  For each $i \in \{1,\ldots, n \}$, if $x_i \leq y$, then
		\begin{equation*}
			A(x_1,\ldots, x_{n})\hspace{-0.5ex} \leq \hspace{-0.5ex} A(x_1, \ldots,x_{i-1}, y, x_{i+1},\ldots, x_{n}).
		\end{equation*}		
	\end{enumerate}
\end{definition}

\begin{proposition}\cite[Corollary 3.8]{RBB13} \label{AN_aggregation}
	Let $A: [0,1]^{n} \rightarrow [0,1]$ be an aggregation function and $N$ be a fuzzy negation. The $N$-dual function of $A$, $A_{N}: [0,1]^{n} \rightarrow [0,1]$, is also an aggregation function.
\end{proposition}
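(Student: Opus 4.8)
The plan is to verify directly that $A_N$ satisfies the two defining properties (A1) and (A2) of an $n$-ary aggregation function, relying only on the boundary behaviour \ref{N2} and the antitonicity \ref{N1} of the negation together with the corresponding properties of $A$. Throughout, $A_N$ is to be understood as the composition given in Definition \ref{def_Ndual}, so the whole argument is an unwinding of that formula.

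First I would check the boundary conditions. Evaluating the $N$-dual at the all-zeros tuple and using $N(0)=1$ from \ref{N2} gives $A_N(0,\ldots,0)=N(A(1,\ldots,1))$; the boundary condition (A1) for $A$ then yields $A(1,\ldots,1)=1$, and a second application of \ref{N2}, namely $N(1)=0$, produces $A_N(0,\ldots,0)=0$. The symmetric computation at the all-ones tuple gives $A_N(1,\ldots,1)=N(A(0,\ldots,0))=N(0)=1$. Thus (A1) holds for $A_N$.

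Next I would establish monotonicity. Fix a coordinate $i$ and suppose $x_i\le y$. By the antitonicity \ref{N1} of $N$ the order of the transformed $i$-th input is reversed, so $N(y)\le N(x_i)$, while the remaining arguments $N(x_j)$ are unchanged. Applying the monotonicity (A2) of $A$ in its $i$-th slot then gives
\[
A(N(x_1),\ldots,N(y),\ldots,N(x_n))\le A(N(x_1),\ldots,N(x_i),\ldots,N(x_n)).
\]
Applying $N$ to both sides and invoking \ref{N1} once more reverses the inequality again, which is exactly $A_N(x_1,\ldots,x_i,\ldots,x_n)\le A_N(x_1,\ldots,y,\ldots,x_n)$; hence $A_N$ is increasing in each argument and (A2) holds.

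Since the verification is a routine consequence of the definitions, I do not anticipate a genuine obstacle. The only point requiring care is bookkeeping the two order-reversals: antitonicity of $N$ is used once on the inner inputs and once on the outer composition, and it is precisely the composition of these two reversals that restores monotonicity. Because both properties are checked coordinatewise and $N$ acts pointwise, no hypothesis beyond \ref{N1} and \ref{N2} is needed; in particular strictness, strongness or any continuity of $N$ plays no role.
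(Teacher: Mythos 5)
Your verification is correct: the two boundary computations and the double order-reversal in the monotonicity step are exactly what is needed, and you rightly observe that only (N1) and (N2) are used. The paper itself does not prove this proposition — it imports it by citation from Corollary 3.8 of the referenced work — so there is no in-text argument to compare against; your direct unwinding of Definition \ref{def_Ndual} is the standard proof of this fact.
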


\begin{definition} \cite{KMP00}\label{def_t-norm}
	An aggregation  function $T:[0,1]^2\rightarrow[0,1]$ is a t-norm if it satisfies the following conditions, for all $x,y,z \in [0,1]$:
	\begin{enumerate} [labelindent=\parindent, leftmargin=*,label=\normalfont{(T\arabic*)}]
		\item  $T(x,y)=T(y,x)$;
		\item   $T(x,T(y,z))=T(T(x,y),z)$;
		\item   $T(x,1)=x$.
	\end{enumerate}\end{definition}
	
	\begin{definition} \label{Overlap}\cite{Bus10a}
		A binary function $O:[0,1]^2 \rightarrow [0,1]$ is said to be an \textbf{overlap function} if it satisfies the following conditions, for all $x, y, z \in [0,1]$:
		
		\begin{enumerate} [labelindent=\parindent, leftmargin=*,label=\normalfont{(O\arabic*)}]
			\item $O(x,y) = O(y,x)$;\label{O1}
			\item $O(x,y) = 0$ if and only if $x = 0$ or $y = 0$;\label{O2}
			\item $O(x,y) = 1$ if and only if $x = y = 1$;\label{O3}
			\item if $ x\leq y$ then $O(x,z) \leq O(y, z)$;\label{O4}
			\item $O$ is continuous;\label{O5}
			
		\end{enumerate}
		
	\end{definition}
	
	\begin{remark} \label{Elem.Neutro_Over1}
		Note that whenever an overlap function has a neutral element, then, by \ref{O3}, it is necessarily  $1$.
	\end{remark}
	For further properties and related concepts on overlap functions, see \cite{BDBB13,BPMHF12,Dimuro201439,Dimuro2015,additive-generators-FSS,Dimuro2017-ql,Jurio201369, QIAO2018107,QIAO20181,8118195,QIAO20181a,QIAO2018,QIAO201958}.

	
	\begin{definition} \label{Grouping}\cite{BPMHF12}
		A binary function $G:[0,1]^2 \rightarrow [0,1]$ is said to be a \textbf{grouping function} if it satisfies the following conditions, for all $x, y, z \in [0,1]$:
		\begin{enumerate} [labelindent=\parindent, leftmargin=*,label=\normalfont{(G\arabic*)}]
			\item $G(x,y) = G(y,x)$;\label{G1}
			\item $G(x,y) = 0$ if and only if $x = y = 0$;\label{G2}
			\item $G(x,y) = 1$ if and only if $x = 1$ or $ y = 1$;\label{G3}
			\item if $ x\leq y$ then $G(x,z) \leq G(y, z)$;\label{G4}
			\item $G$ is continuous;\label{G5}
			
		\end{enumerate}
		
	\end{definition}
	
	\begin{remark} \label{Elem.Neutro_group0}
		Note that whenever a grouping function has a neutral element, then, by \ref{G2}, this element is necessarily $0$.
	\end{remark}
	
	For all properties and related concepts on grouping functions, see also \cite{BDBB13,DimuroIPMU,DBS14, Dimuro2017-ql,Jurio201369,QIAO2018107,QIAO20181,QIAO20181a,QIAO201958}.

	\begin{theorem}\cite[Theorem 2]{BPMHF12} \label{OverGrou}
		Let $O$ be an overlap function, and let $N$ be a strict fuzzy negation. Then,
		\begin{eqnarray} \label{Group_of_Over}
		G(x,y)  = N(O(N(x), N(y)))
		\end{eqnarray}
		is a grouping function. Reciprocally, if $G$ is a grouping function, then
		\begin{eqnarray}\label{Over_of_Group}
		O(x,y)  = N(G(N(x), N(y)))
		\end{eqnarray}
		is an overlap function.
	\end{theorem}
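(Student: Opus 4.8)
The plan is to prove the first claim by verifying directly that $G(x,y) = N(O(N(x),N(y)))$ satisfies each of the axioms \ref{G1}--\ref{G5}, and then to obtain the converse by a symmetric argument. The facts about $N$ that I will use throughout are those guaranteed by strictness: since $N$ satisfies \ref{N1}--\ref{N4}, it is a continuous, strictly decreasing bijection of $[0,1]$ onto itself with $N(0)=1$ and $N(1)=0$. In particular, strict monotonicity upgrades the boundary values \ref{N2} to the equivalences $N(t)=1 \iff t=0$ and $N(t)=0 \iff t=1$, which is exactly what the boundary conditions of a grouping function will demand.

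First I would dispatch the three ``structural'' axioms. Commutativity \ref{G1} is immediate from \ref{O1}, since swapping $x$ and $y$ merely swaps the two arguments of $O$. Continuity \ref{G5} follows because $G$ is a composition of the continuous maps $N$ (by \ref{N3}) and $O$ (by \ref{O5}). For monotonicity \ref{G4}, suppose $x \leq y$; antitonicity \ref{N1} gives $N(y) \leq N(x)$, so \ref{O4} yields $O(N(y),N(z)) \leq O(N(x),N(z))$, and applying the antitonic $N$ once more reverses the inequality to $G(x,z) \leq G(y,z)$. Thus the two applications of $N$ cancel the order reversal and $G$ is nondecreasing.

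The substantive part is the pair of boundary conditions \ref{G2} and \ref{G3}, and this is where strictness of $N$ is essential. For \ref{G2}, using $N(t)=0 \iff t=1$ we get $G(x,y)=0 \iff O(N(x),N(y))=1$, which by \ref{O3} holds iff $N(x)=N(y)=1$, and by $N(s)=1\iff s=0$ this is equivalent to $x=y=0$. For \ref{G3}, using $N(t)=1\iff t=0$ we get $G(x,y)=1 \iff O(N(x),N(y))=0$, which by \ref{O2} holds iff $N(x)=0$ or $N(y)=0$, i.e. iff $x=1$ or $y=1$. I expect this step to be the main obstacle, but only in the sense that it is the one genuinely requiring \ref{N4}: for a non-strict negation the ``$\Leftarrow$'' implications would survive, yet the ``$\Rightarrow$'' directions could fail, so the equivalences in \ref{O2}/\ref{O3} would not transfer correctly.

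Finally, for the reciprocal statement the whole argument is symmetric under interchanging the roles of $O$ and $G$: the construction swaps \ref{O2} with \ref{G3} and \ref{O3} with \ref{G2}, while \ref{O1}, \ref{O4}, \ref{O5} play the same roles as \ref{G1}, \ref{G4}, \ref{G5}. Hence verifying that $O(x,y)=N(G(N(x),N(y)))$ is an overlap function amounts to repeating the three structural checks verbatim and re-running the boundary computation with $O$ and $G$ exchanged; note that no involutivity of $N$ is needed, since each direction uses only that $N$ is a strictly decreasing continuous bijection.
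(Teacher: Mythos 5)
Your proof is correct, and it is the standard direct verification: check \ref{G1}--\ref{G5} one by one, with the only nontrivial point being that strictness of $N$ turns the boundary values $N(0)=1$, $N(1)=0$ into the equivalences needed to transfer \ref{O2}/\ref{O3} into \ref{G3}/\ref{G2}. Note that the paper itself gives no proof of this statement --- it is quoted from the cited reference \cite[Theorem 2]{BPMHF12} --- so there is nothing to compare against beyond observing that your argument is the expected one; your closing remarks correctly identifying where \ref{N4} is essential and that involutivity of $N$ is not needed are accurate.
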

	
	In the following proposition we show that if an overlap function $O$ admits a neutral element, then the grouping function $G$ generated by $O$ admits no neutral element.
	\begin{proposition} \label{NeutralONoNeutralG}
		Let $N$ be a strict and non-strong fuzzy negation and $O$ be an overlap function. If $O$ has a neutral element, then the grouping function $G$ given by Eq. (\ref{Group_of_Over}) has no neutral element.
	\end{proposition}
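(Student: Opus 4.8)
The plan is to argue by contradiction. First I would recall that, since $N$ is strict, Theorem \ref{OverGrou} guarantees that $G(x,y)=N(O(N(x),N(y)))$ is indeed a grouping function; in particular Remark \ref{Elem.Neutro_group0} applies to it. So, assuming that $G$ admits a neutral element, that element must be $0$, which means we would have $G(x,0)=x$ for all $x\in[0,1]$.

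Next I would simply unfold this identity. Using \ref{N2} we have $N(0)=1$, hence
\begin{equation*}
G(x,0)=N\bigl(O(N(x),N(0))\bigr)=N\bigl(O(N(x),1)\bigr).
\end{equation*}
Because $O$ is assumed to possess a neutral element, Remark \ref{Elem.Neutro_Over1} forces that element to be $1$, so $O(N(x),1)=N(x)$. Substituting back gives $G(x,0)=N(N(x))$. Combining this with the supposed neutrality of $0$ yields $N(N(x))=x$ for every $x\in[0,1]$, which is precisely condition \ref{N5}; that is, $N$ would be strong.

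This contradicts the hypothesis that $N$ is non-strong, and therefore $G$ can have no neutral element. I do not expect any genuine obstacle here: the argument is a short chain of reductions whose only subtle points are (i) knowing in advance that a neutral element of $G$ must be $0$ and a neutral element of $O$ must be $1$ --- both supplied by the boundary conditions \ref{G2} and \ref{O3} through the two remarks --- and (ii) making sure that strictness of $N$ is what legitimizes treating $G$ as a grouping function, so that Remark \ref{Elem.Neutro_group0} is available. Non-strongness of $N$ enters only at the very last step, exactly where it is needed to break the forced involution.
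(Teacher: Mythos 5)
Your proof is correct and follows essentially the same route as the paper's: both reduce the claim to the computation $G(x,0)=N(O(N(x),1))=N(N(x))$ and then invoke non-strongness of $N$, the only difference being that you phrase it as a contradiction while the paper directly exhibits a witness $\tilde{x}$ with $N(N(\tilde{x}))\neq\tilde{x}$. Your version is in fact slightly more explicit about why $0$ is the only candidate neutral element of $G$, a point the paper leaves implicit.
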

	\begin{proof}
		Since $O$ has a neutral element, then $O(x,1) = x$, for all $x \in [0,1]$. However, as $N$ is a non-strong fuzzy negation, there is $\tilde{x} \in [0,1]$ such that $N(N(\tilde{x})) \neq \tilde{x}$, so:
		\begin{eqnarray*}
			G(\tilde{x},0) = N(O(N(\tilde{x}),1)) = N(N(\tilde{x})) \neq \tilde{x}.
		\end{eqnarray*}
		Therefore, $G$ has no neutral element.
	\end{proof}
	
	\begin{remark}  \label{grouping_of_G_O}
		There are many ways to define a grouping function from other operators. For example, given a grouping function $G$  and an overlap function $O$, we have that $G'(x,y)$  $= G(O(1,x),O(1,y))$ is a grouping function, directly from the properties of $G$ and $O$.
	\end{remark}
	Next, we present the concept of general overlap function.

	\begin{definition}\cite{DeMiguel2019} \label{def-gen-ov}
		A function $ \mathcal{GO}\colon  [0,1]^n$ $ \rightarrow [0,1]$ is said to be a general overlap function if it satisfies the following conditions, for all $\vv{x}= (x_1, \ldots, x_n) \in [0,1]^n$:
		\begin{description}
			\item[($\mathcal{GO}1$)] $\mathcal{GO}(x_1, \cdots, x_n) = \mathcal{GO}(x_{j_1}, \cdots, x_{j_n})$, where \\
			 $(x_{j_1}, \cdots, x_{j_n})$ is a permutation of $(x_1, \cdots, x_n)$;
			
			\item[($\mathcal{GO}2$)] If $\prod_{i=1}^n x_i = 0$ then $\mathcal{GO}(\vv{x})=0$;
			\item[($\mathcal{GO}3$)] If  $\prod_{i=1}^n x_i = 1$   then   $\mathcal{GO}(\vv{x})=1$;
			
			\item[($\mathcal{GO}4$)] $\mathcal{GO}$ is increasing;
			
			\item[($\mathcal{GO}5$)] $\mathcal{GO}$ is continuous.
		\end{description}
		
	\end{definition}

	Some examples of overlap functions and general overlap functions are given in Table \ref{tab-eg}, found in \cite{DeMiguel2019,Dimuro201439}. Observe that any overlap function is a bivariate general overlap function, but the converse does not hold.
	
	
	\subsection{Some new result on general overlap and grouping}
	
	\begin{proposition}
		Let $O$ be an overlap and $a\in (0,1)$. Then $O_a:[0,1]^2\rightarrow [0,1]$ defined, for all $x,y \in [0,1]$,  by
		\begin{eqnarray*} \label{eq-def-GOF-nao-over}
			O_a(x,y)  = \frac{\max(0,O(x,y)-O(\max(x,y),a))}{1-O(\max(x,y),a)}
		\end{eqnarray*}
		is a bivariate general overlap function which is not an overlap function.
	\end{proposition}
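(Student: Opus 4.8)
The plan is to verify the five defining conditions of a general overlap function (Definition \ref{def-gen-ov}) for $O_a$ and then exhibit a violation of an overlap axiom to conclude that $O_a$ is not an overlap function. First I would record that the denominator never vanishes: since $a<1$, condition \ref{O3} forces $O(\max(x,y),a)<1$ for every $(x,y)$, and in fact $O(\max(x,y),a)\le O(1,a)<1$ by \ref{O4}, so $O_a$ is well defined, and since the numerator never exceeds $1-O(\max(x,y),a)$ it takes values in $[0,1]$.

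Conditions ($\mathcal{GO}1$), ($\mathcal{GO}2$), ($\mathcal{GO}3$) and ($\mathcal{GO}5$) I would settle directly. Commutativity ($\mathcal{GO}1$) is immediate from \ref{O1} together with the symmetry of $\max$. For ($\mathcal{GO}2$), if $xy=0$ then say $x=0$, whence $O(x,y)=0$ by \ref{O2} while $O(\max(x,y),a)\ge 0$, so the truncation $\max(0,\cdot)$ returns $0$. For ($\mathcal{GO}3$), if $xy=1$ then $x=y=1$, so $O(x,y)=O(1,1)=1$ by \ref{O3} and the numerator equals the denominator $1-O(1,a)>0$, giving $O_a(1,1)=1$. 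Continuity ($\mathcal{GO}5$) follows because $O$ and $\max$ are continuous, the denominator is bounded away from $0$ on the compact square, and $t\mapsto\max(0,t)$ is continuous.

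The hard part will be monotonicity, ($\mathcal{GO}4$); this is the step I expect to require real work. By ($\mathcal{GO}1$) it suffices to fix $y$ and show $x\mapsto O_a(x,y)$ is non-decreasing. It is convenient to write $O_a(x,y)=\Psi\big(O(\max(x,y),a),\,O(x,y)\big)$, where $\Psi(p,q)=\frac{\max(0,q-p)}{1-p}$ for $p<1$, which for $q\ge p$ equals $1-\frac{1-q}{1-p}$; one checks that $\Psi$ is non-decreasing in $q$ and non-increasing in $p$. The argument then splits according to the position of the maximum, and the two regimes match at $x=y$ since there $\max(x,y)=x=y$. While $x\le y$ the maximum is the constant $y$, so the adverse slot $O(\max(x,y),a)=O(y,a)$ is frozen and only the favourable slot $O(x,y)$ increases, making $O_a$ non-decreasing. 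The delicate regime is $x\ge y$, where the maximum equals $x$ itself: raising $x$ increases the favourable slot $q=O(x,y)$ but simultaneously increases the adverse slot $p=O(x,a)$, and the crux is to show that the gain from $q$ dominates the loss from $p$. Controlling this competition between the growth of $O(x,y)$ and that of $O(x,a)$ — presumably by exploiting $a\le y$, so that $O(x,a)\le O(x,y)$, together with the continuity and monotonicity of $O$ — is the main obstacle of the proof, and the place where the particular algebraic form of $O_a$ must be used rather than just the abstract behaviour of $\Psi$.

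Finally, to see that $O_a$ is not an overlap function I would contradict \ref{O2}. Pick any $x_0\in(0,a]$ and set $x=y=x_0$. Then $\max(x,y)=x_0$ and, since $x_0\le a$, monotonicity \ref{O4} gives $O(x_0,x_0)\le O(x_0,a)$, so the numerator truncates to $0$ and $O_a(x_0,x_0)=0$ with $x_0\neq 0$. Hence $O_a$ vanishes at points off the coordinate axes and cannot satisfy \ref{O2}, confirming that $O_a$ is a general overlap function that is not an overlap function.
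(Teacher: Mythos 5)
Your treatment of well-definedness, ($\mathcal{GO}1$), ($\mathcal{GO}2$), ($\mathcal{GO}3$), ($\mathcal{GO}5$) and of the failure of (O2) is correct and in fact more explicit than the paper's, which declares these points clear and only writes out a continuity check along sequences tending to $a$; your observation that continuity follows at once because $O_a$ is a composition of continuous maps with denominator bounded away from zero is cleaner. The witness $x_0\in(0,a]$ with $O_a(x_0,x_0)=0$ is exactly the right refutation of (O2).

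The genuine gap is ($\mathcal{GO}4$), which you correctly isolate as the crux but do not prove, and the route you sketch cannot close it. In the regime $x\ge y$ the truncation is only active when $y\le a$ (where $O_a$ vanishes identically), so the real case is $a<y\le x_1\le x_2$, where $O_a(x,y)=1-\frac{1-O(x,y)}{1-O(x,a)}$; monotonicity in $x$ is then equivalent to $\frac{O(x_2,a)-O(x_1,a)}{1-O(x_1,a)}\le\frac{O(x_2,y)-O(x_1,y)}{1-O(x_1,y)}$, i.e.\ the relative growth of $O(\cdot,a)$ must never exceed that of $O(\cdot,y)$. Nothing in (O1)--(O5) guarantees this, and the inequality $O(x,a)\le O(x,y)$ you propose to exploit is far too weak. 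Concretely, take $a=0.2$ and an overlap function with $O(0.5,0.2)=0.1$, $O(0.9,0.2)=0.5$ and $O(0.5,0.5)=O(0.9,0.5)=0.6$ (such an $O$ exists by symmetric monotone bilinear interpolation of these grid values, all compatible with (O1)--(O5)); then $O_a(0.5,0.5)=5/9$ while $O_a(0.9,0.5)=1/5$, so $O_a$ is not increasing. Thus the step you flag as the main obstacle is not merely hard: it fails in the stated generality. The paper's own proof offers nothing here beyond the assertion that $O_a$ is ``clearly'' increasing, so your distrust of this step is well founded; repairing it would require extra hypotheses on $O$ (for instance, multiplicative forms such as $O(x,y)=\min(x,y)f(\max(x,y))$ or $x^py^p$ do satisfy the needed relative-growth inequality precisely because $a\le y$).
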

	\begin{proof} By \ref{O3}, $O(\max(x,y),a)\neq 1$ and therefore $O_a$ is well defined.
		Clearly, $O_a$ is  commutative, increasing, satisfies ($\mathcal{GO}$2) and ($\mathcal{GO}$3) but does not satisfy  \ref{O2}. In addition, let  $x_i\in [0,1]$ be a sequence such that $\lim_{i\rightarrow \infty} x_i =a$. So, for each $y\in [0,1]$, we have two situations: (i) if $y\leq a$, then  $\lim_{i\rightarrow \infty} O_a(x_i,y)=0=O_a(a,y)$ and (ii) if $y>a$ then, since $O$ is continuous and commutative,  $\lim_{i\rightarrow \infty} O_a(x_i,y)= \lim_{i\rightarrow \infty} \frac{\max(0,O(x_i,y)-O(y,a))}{1-O(y,a)}= 0=O_a(a,y).$ Therefore, $O_a$ is continuous.
	\end{proof}
	
	\begin{table*}[t]
		\centering
			\begin{tabular}{ll}
				\hline
				Overlap functions &   General overlap functions  \\
				\hline
				$O_{mM}(x,y)=\min\{x,y\}\hspace{-0.05cm}\max\{x^2\hspace{-0.1cm},y^2\hspace{-0.05cm}\}$ &  $\mathcal{GO}_{max}(x,y)=\max\{0,x^2+y^2-1\}$\\[0.6cm]
				
				$O_{DB}(x,y)=\left \{ \begin{array}{ll}
				\hspace{-0.1cm} \frac{2xy}{x+y}, & \hspace{-0.1cm} \mbox{ if } \; x+y \neq 0;\\
				0, & \hspace{-0.1cm} \mbox{if }\; x+y=0.
				\end{array}\right.$ &  $\mathcal{GO}_{T_L}(x,y)\hspace{-0.1cm}=\hspace{-0.1cm} (\min\{x,y\})^p \cdot\hspace{-0.05cm} \max\{0,x\hspace{-0.1cm}+\hspace{-0.1cm}y\hspace{-0.1cm}-\hspace{-0.1cm}1\},$ \mbox{for} $p>0$ \\[0.6cm]

				$O_P(x,y)=x^py^p, \mbox{ with } \; p>0.$  & $\mathcal{GO}_{PN}(\vv{x})= \prod\limits_{i=1}^n x_i \cdot \left( \left\{ \begin{array}{ll}
				0, \\ \qquad  \mbox{if } \sum\limits_{i=1}^n x_i \leq 1,\\
				\bigwedge(\vv{x})=\bigwedge(x_1,\ldots,x_n),  \\ \qquad \mbox{otherwise.}
				\end{array}\right.\hspace{-0.2cm} \right)$ \\[0.6cm]

				$O_V (x,y)= \left \{ \begin{array}{ll}
				\frac{1+(2x-1)^2(2y-1)^2}{2},\\   \;\; \mbox{ if } \;\; x,y \in [0.5,1],\\
				\min\{x,y\},  \\ \;\; \mbox{otherwise.}
				\end{array}\right.$ &  $\mathcal{GO}_{GN}(\vv{x})\hspace{-0.1cm}=\hspace{-0.1cm} \sqrt[n]{\prod\limits_{i=1}^n x_i} \cdot \left( \left\{ \begin{array}{ll}
				\hspace{-0.1cm}0, \\ \;  \mbox{ if } \sum\limits_{i=1}^n x_i \leq 1,\\
				\hspace{-0.1cm}\bigwedge(\vv{x})=\bigwedge(x_1,\ldots,x_n), \\  \; \mbox{ otherwise.}
				\end{array}\right.\hspace{-0.2cm} \right)$\\[0.6cm]

				$O_{min}(x,y)=\min\{x,y\}$ & \\[0.6cm]

				\hline
			\end{tabular}
			\caption{Examples of overlap functions $O$  and  general overlap functions $\mathcal{GO}$ }
				\label{tab-eg} 
	\end{table*}
	
\begin{proposition}\label{prop-dual}
	Consider a  strict negation $N$ and a bivariate general overlap function $\mathcal{GO}$. If $\mathcal{GO}$ satisfies the following conditions, for all $x,y \in [0,1]$:
	\begin{description}
		\item [($\mathcal{GO}2a$)]  If $\mathcal{GO}(x,y)=0$ then $xy = 0$;
		\item [($\mathcal{GO}3a$)]  If $\mathcal{GO}(x,y)=1$ then $xy = 1$,
	\end{description}  then
	\begin{eqnarray} \label{Group_of_GOver}
	G(x,y)  = N(\mathcal{GO}(N(x), N(y)))
	\end{eqnarray}
	is a grouping function. Reciprocally, if $G$ is a grouping function, then
	\begin{eqnarray}\label{Over_of_Group}
	\mathcal{GO}(x,y)  = N(G(N(x), N(y)))
	\end{eqnarray}
	is a general overlap function satisfying ($\mathcal{GO}$2a) and ($\mathcal{GO}$3a).
\end{proposition}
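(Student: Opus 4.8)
The plan is to prove both implications by directly checking the defining axioms, and the single recurring tool I would rely on is that a strict negation $N$ is a continuous, strictly decreasing bijection of $[0,1]$ with $N(0)=1$, $N(1)=0$; in particular $N(t)=0 \iff t=1$ and $N(t)=1 \iff t=0$. These two equivalences are what let me translate boundary behaviour of $\mathcal{GO}$ into boundary behaviour of $G$ and back.

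For the forward direction I would first dispose of the structural axioms, which cost nothing. Commutativity (G1) is inherited from the symmetry ($\mathcal{GO}$1) of $\mathcal{GO}$. Monotonicity (G4) follows by sandwiching the increasingness ($\mathcal{GO}$4) between two applications of the antitonic $N$: if $x\le y$ then $N(x)\ge N(y)$, so $\mathcal{GO}(N(x),N(z))\ge \mathcal{GO}(N(y),N(z))$, and applying $N$ once more reverses the inequality to give $G(x,z)\le G(y,z)$. Continuity (G5) is immediate since $G$ is a composition of the continuous maps $N$ and $\mathcal{GO}$.

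The substance lies in the boundary conditions (G2) and (G3), and this is exactly where the extra hypotheses ($\mathcal{GO}$2a) and ($\mathcal{GO}$3a) are consumed. For (G2) I would note that $G(x,y)=0 \iff \mathcal{GO}(N(x),N(y))=1$; then ($\mathcal{GO}$3a) forces $N(x)N(y)=1$, hence $N(x)=N(y)=1$, i.e. $x=y=0$, while the converse recovers $\mathcal{GO}(N(x),N(y))=1$ from $x=y=0$ using ($\mathcal{GO}$3). Symmetrically, $G(x,y)=1 \iff \mathcal{GO}(N(x),N(y))=0$, where ($\mathcal{GO}$2a) yields $N(x)N(y)=0$, hence $x=1$ or $y=1$, and ($\mathcal{GO}$2) supplies the reverse direction. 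The converse statement is proved by the identical template with roles interchanged: ($\mathcal{GO}$1), ($\mathcal{GO}$4), ($\mathcal{GO}$5) follow from (G1), (G4), (G5); ($\mathcal{GO}$2) and ($\mathcal{GO}$3) follow from (G3) and (G2) respectively; and the two supplementary conditions fall out as genuine biconditionals, since $\mathcal{GO}(x,y)=0 \iff G(N(x),N(y))=1 \iff x=0 \text{ or } y=0$ by (G3) gives ($\mathcal{GO}$2a), and $\mathcal{GO}(x,y)=1 \iff G(N(x),N(y))=0 \iff x=y=1$ by (G2) gives ($\mathcal{GO}$3a).

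The main point to get right — rather than a genuine obstacle — is recognizing \emph{why} the bare general-overlap axioms do not suffice and why ($\mathcal{GO}$2a),($\mathcal{GO}$3a) are precisely the missing ingredients. A general overlap function only guarantees the one-sided implications $\prod x_i=0 \Rightarrow \mathcal{GO}=0$ and $\prod x_i=1 \Rightarrow \mathcal{GO}=1$, whereas a grouping function requires the full biconditionals in (G2) and (G3). The added hypotheses restore exactly the reverse implications, pinning the zero-set and one-set of $\mathcal{GO}$ to the product characterization, which is what guarantees the $N$-dual lands inside the grouping class and not merely inside some weaker family.
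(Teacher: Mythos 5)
Your proof is correct, but it takes a different route from the paper. The paper's proof is a one-line reduction: it observes that ($\mathcal{GO}$2) together with ($\mathcal{GO}$2a) gives the full biconditional \ref{O2}, and ($\mathcal{GO}$3) together with ($\mathcal{GO}$3a) gives \ref{O3}, so a bivariate general overlap function satisfying the two supplementary conditions is already an overlap function in the sense of Definition \ref{Overlap}; the whole statement then follows from the known duality result for overlap and grouping functions under a strict negation (Theorem \ref{OverGrou}). You instead verify every grouping and general-overlap axiom from scratch, using that a strict $N$ is a decreasing bijection with $N(t)=0\iff t=1$ and $N(t)=1\iff t=0$. Each of your steps checks out, including the boundary conditions where ($\mathcal{GO}$2a) and ($\mathcal{GO}$3a) are consumed, and the converse where the two supplementary conditions are recovered from (G2) and (G3). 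What the paper's approach buys is brevity and the conceptual observation that the added hypotheses exactly close the gap between the general-overlap and overlap classes (which is also precisely your closing remark); what your approach buys is self-containedness, an explicit accounting of where each hypothesis is used, and an explicit verification that the function defined from $G$ in the converse direction satisfies ($\mathcal{GO}$2a) and ($\mathcal{GO}$3a), a point the paper's citation-based argument leaves implicit.
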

\begin{proof}  Since such general overlap function is also an overlap function, then the result follows from   Theorem \ref{OverGrou}.
\end{proof}

An element $a\in [0,1]$ is a neutral element of $\mathcal{GO}$ if for each $x\in [0,1]$, $\mathcal{GO}(x,\underbrace{a,\ldots,a}_{(n-1)-times})=x$.


\begin{proposition} \label{prop-NE-GO}
	Let $\mathcal{GO}$ be a bivariate general overlap function. 1 is a neutral element of  $\mathcal{GO}$ if and only if  $\mathcal{GO}$ satisfies ($\mathcal{GO}$3a) and has a neutral element.
\end{proposition}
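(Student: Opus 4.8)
The plan is to prove the two implications separately, handling the converse direction (sufficiency of ($\mathcal{GO}3a$) together with a neutral element) first, since it is the shorter of the two and isolates the role of ($\mathcal{GO}3a$).

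For the converse, I would assume that $\mathcal{GO}$ satisfies ($\mathcal{GO}3a$) and possesses some neutral element $a \in [0,1]$; in the bivariate case this means $\mathcal{GO}(x,a) = x$ for every $x \in [0,1]$. The key move is simply to evaluate the neutrality identity at $x = 1$, which gives $\mathcal{GO}(1,a) = 1$. Applying ($\mathcal{GO}3a$) to the pair $(1,a)$ then forces $1 \cdot a = 1$, hence $a = 1$. Thus the neutral element, whatever it was, must coincide with $1$, so $1$ is a neutral element of $\mathcal{GO}$.

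For the direct direction, I would assume that $1$ is a neutral element, i.e. $\mathcal{GO}(x,1) = x$ for all $x$. The existence of a neutral element is then immediate, so the only real work is to establish ($\mathcal{GO}3a$). Here I would take an arbitrary pair $(x,y)$ with $\mathcal{GO}(x,y) = 1$ and combine the monotonicity ($\mathcal{GO}4$) with neutrality: since $y \le 1$, we get $1 = \mathcal{GO}(x,y) \le \mathcal{GO}(x,1) = x$, whence $x = 1$; by the commutativity ($\mathcal{GO}1$) and the same argument with the roles of the arguments exchanged (using $x \le 1$), we obtain $y = 1$. Therefore $xy = 1$, which is precisely ($\mathcal{GO}3a$).

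I do not anticipate a genuine obstacle, as the whole argument reduces to plugging in the boundary value $1$ and combining it with monotonicity and symmetry. The only point that needs a little care is the direct direction, where ($\mathcal{GO}3a$) has to be \emph{derived} rather than assumed: one must squeeze each of $x$ and $y$ between its trivial upper bound $1$ and the value forced by $\mathcal{GO}(x,y) = 1$, and this is exactly the step where the increasingness ($\mathcal{GO}4$) and commutativity ($\mathcal{GO}1$) of $\mathcal{GO}$ come into play.
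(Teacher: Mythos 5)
Your proposal is correct and follows essentially the same route as the paper: the forward direction derives ($\mathcal{GO}$3a) by squeezing $x$ and $y$ via ($\mathcal{GO}$4), commutativity and neutrality of $1$, and the converse evaluates the neutrality identity at $1$ and applies ($\mathcal{GO}$3a) to force $a=1$. No gaps.
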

\begin{proof}
	If  $\mathcal{GO}(x,y)=1$ then, by ($\mathcal{GO}$4) and since 1 is a neutral element of  $\mathcal{GO}$, one has that $x=\mathcal{GO}(x,1)=1$  and $y=\mathcal{GO}(1,y)=1$, i.e. $xy=1$.
	Conversely, if a bivariate general overlap function $\mathcal{GO}$ satisfies ($\mathcal{GO}$3a) and has a neutral element $a$ then $a=1$. In fact, $\mathcal{GO}(a,1)=1$ and therefore, by ($\mathcal{GO}$3a), $a=1$.
\end{proof}

\begin{remark}
	Observe that the result stated by Proposition \ref{prop-NE-GO} does not mean that when a bivariate general overlap function has a neutral  element then it is equal to 1. In fact, for each $e\in (0,1]$, the function
	\[\mathcal{GO}(x,y)=\left \{ \begin{array}{ll}
	\min(x,y), & \mbox{ if }\max(x,y)\leq e \\
	\max(x,y), & \mbox{ if }\min(x,y)\geq e \\
	\frac{xy}{e}, & \mbox{ if }\min(x,y) <e <\max(x,y)
	\end{array}
	\right. \]
	is a general overlap function with $e$ as neutral element.
\end{remark}

\begin{remark}
	As it is well known, there exists a unique idempotent \textbf{t-norm}, namely, the minimum t-norm. On the other hand, there are uncountable idempotent overlap functions. For example, for every $p, q > 0$, the function
	\begin{eqnarray*}
		O(x, y) = \left ( \dfrac{x^{p}y^{q}+x^{q}y^{p}}{2} \right)^{\frac{1}{p+q}}
	\end{eqnarray*}
	is an idempotent general overlap function, see \cite{BDBB13}.
\end{remark}

\begin{proposition} \label{Ominimum}
	If $1$ is the neutral element of a general overlap function $\mathcal{GO}$ and $\mathcal{GO}$ is idempotent, then $\mathcal{GO}$ is the minimum.
\end{proposition}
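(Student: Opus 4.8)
The plan is to establish the identity $\mathcal{GO}(x,y)=\min(x,y)$ by a simple squeeze argument that relies on only three of the defining features of $\mathcal{GO}$, namely commutativity ($\mathcal{GO}$1) and increasingness ($\mathcal{GO}$4), together with the two standing hypotheses (idempotency and the fact that $1$ is a neutral element). In particular, neither continuity ($\mathcal{GO}$5) nor the boundary conditions ($\mathcal{GO}$2)--($\mathcal{GO}$3) will enter the argument.

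First I would reduce to a single ordering of the arguments. Since $\mathcal{GO}$ is commutative, it suffices to evaluate $\mathcal{GO}(x,y)$ under the assumption $x\le y$; the opposite case follows by swapping the two entries, and in both cases the target value $\min(x,y)$ equals the smaller argument. So I fix $x\le y$ and bound $\mathcal{GO}(x,y)$ from both sides by perturbing the \emph{second} coordinate while keeping $x$ fixed. For the upper bound I would use increasingness together with $y\le 1$ and the neutral-element property, giving $\mathcal{GO}(x,y)\le \mathcal{GO}(x,1)=x$. For the lower bound I would use increasingness together with $x\le y$ and idempotency, giving $\mathcal{GO}(x,y)\ge \mathcal{GO}(x,x)=x$. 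Combining the two inequalities yields $\mathcal{GO}(x,y)=x=\min(x,y)$, which is exactly the claim when $x\le y$; commutativity then closes the case $y\le x$, so that $\mathcal{GO}(x,y)=\min(x,y)$ holds on all of $[0,1]^2$.

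I do not expect any genuine obstacle here, since this is the classical proof that the minimum is the only idempotent t-norm, transplanted essentially verbatim to the setting of general overlap functions; it is the monotonicity of $\mathcal{GO}$ that makes a t-norm-style estimate available despite the weaker axiom set. The only point requiring a little care is the bookkeeping: one must move the same coordinate in both estimates, so that the neutral-element identity and the idempotency identity apply cleanly, and one must invoke commutativity at the end rather than assume a global order, so that the conclusion is valid for every pair $(x,y)$.

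Finally, I would note that the same reasoning extends without change to the $n$-ary case should it be intended: writing $m=\min_i x_i$ and using commutativity to place $m$ in the first slot, one squeezes $\mathcal{GO}(\vv{x})$ via $\mathcal{GO}(m,\ldots,m)\le \mathcal{GO}(\vv{x})\le \mathcal{GO}(m,1,\ldots,1)=m$, where the left inequality comes from idempotency and increasingness and the right one from increasingness and the neutral element $1$. This again forces $\mathcal{GO}(\vv{x})=\min(x_1,\ldots,x_n)$.
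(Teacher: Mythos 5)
Your proof is correct and is essentially the paper's own argument: a two-sided squeeze in which idempotency plus monotonicity give $\min \le \mathcal{GO}$ and the neutral element $1$ plus monotonicity give $\mathcal{GO} \le \min$. The paper states the argument directly in the $n$-ary form that you sketch in your closing remark, so there is no substantive difference.
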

\begin{proof}
	Given $x_1,\ldots,x_n\in [0,1]$. Then, since $\mathcal{GO}$ is idempotent and increasing in each variable, 
	\begin{eqnarray*}
		\min(x_1 \hspace{-1.0ex}&,&\hspace{-1.0ex} \ldots,x_n)=\\
		&=&\mathcal{GO}(\min(x_1,\ldots,x_n),\ldots,\min(x_1,\ldots,x_n))\\
		&\leq& \mathcal{GO}(x_1,\ldots,x_n).
	\end{eqnarray*}
	Conversely, for each $i=1,\ldots,n$, since 1 is the neutral element of $\mathcal{GO}$, we have that 
	\begin{eqnarray*}
		\mathcal{GO}(x_1,\ldots,x_n)\leq \mathcal{GO}(x_i,1,\ldots,1)=x_i	
	\end{eqnarray*}
	and therefore, $\mathcal{GO}(x_1,\ldots,x_n)\leq \min(x_1,\ldots,x_n)$.
\end{proof}


\subsection{Fuzzy implications derived from overlap and grouping functions}

One can find the definition of fuzzy implication functions in \cite{fodor1994,BJ08,Mas07a}, given as follows:

\begin{definition} \label{ImpFuzzy} 
	A function $I: [0,1]^{2} \rightarrow [0,1]$ is a \textbf{fuzzy implication} if, for all $x, y, z \in [0,1] $, the following properties are satisfied:
	\begin{enumerate} [labelindent=\parindent, leftmargin=*,label=\normalfont{(I\arabic*)}]
		\item If $ x\leq z$ then $I(x,y) \geq I(z,y)$;\label{I1}
		\item If $ y\leq z$ then $I(x,y) \leq I(x,z)$;\label{I2}
		\item  $I(0,y) = 1$;\label{I3}
		\item $I(x,1) = 1$;\label{I4}
		\item $I(1,0) = 0$.\label{I5}
		
	\end{enumerate}
\end{definition}

We denote by $\mathcal{FI}$ the set of all fuzzy implications.

\begin{definition}\cite[Def. 11]{PBSS18b}
	A fuzzy implication function is said to be crisp if  $I(x,y)\in \{0,1\}$,  for each $x,y\in [0,1]$.
\end{definition}

\begin{proposition}  \cite[Prop. 2]{PBSS18b}  Let $I : [0, 1]^2 \rightarrow [0, 1]$ be a fuzzy implication. Then $I$ is crisp if and only if one of the following conditions are satisfied, for all $x, y \in [0, 1]$:
	\begin{description}
		\item [(C1)] If there exists $\alpha \in (0, 1]$ and $\beta \in  [0, 1)$ such that $I = I_{\underline{\alpha}}^{\overline{\beta}}$,
		where
		\[I_{\underline{\alpha}}^{\overline{\beta}}(x, y) = 
		\begin{cases}
		0, & \mbox{ if} \; x \geq \alpha \; \mbox{and} \; y \leq \beta \\
		1, & \mbox{ otherwise}.
		\end{cases}  \]
		
		\item [(C2)] If there exists $\alpha \in [0, 1)$ and $\beta \in  (0, 1]$ such that $I = I_{\alpha}^{\beta}$,
		where
		\[I_{\alpha}^{\beta}(x, y) = 
		\begin{cases}
		0, & \mbox{if} \; x > \alpha \; \mbox{and} \; y < \beta \\
		1, & \mbox{otherwise}.
		\end{cases}  \]

		\item [(C3)] If there exists $\alpha, \beta  \in (0, 1]$  such that $I = I_{\underline{\alpha}}^{\beta}$,
		where
		\[I_{\underline{\alpha}}^{\beta}(x, y) = 
		\begin{cases}
		0, & \mbox{if} \; x \geq \alpha \; \mbox{and} \; y < \beta \\
		1, & \mbox{otherwise}.
		\end{cases} \]

		\item [(C4)] If there exists $\alpha, \beta  \in [0, 1)$  such that $I = I_{\alpha}^{\overline{\beta}}$,
		where
		\[I_{\alpha}^{\overline{\beta}}(x, y) = 
		\begin{cases}
		0, & \mbox{ if} \; x > \alpha \; \mbox{and} \; y \leq \beta \\
		1, & \mbox{ otherwise}.
		\end{cases} \]
	\end{description}
\end{proposition}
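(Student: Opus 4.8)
The plan is to prove the biconditional, with the \emph{if} direction being routine verification and the \emph{only if} direction carrying all the real content.

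For the \emph{if} direction, I would simply check that each of the four candidate functions $I_{\underline{\alpha}}^{\overline{\beta}}$, $I_{\alpha}^{\beta}$, $I_{\underline{\alpha}}^{\beta}$, $I_{\alpha}^{\overline{\beta}}$ is a crisp fuzzy implication. Crispness is immediate, since each is $\{0,1\}$-valued by definition. For (I1)--(I5) I would observe that in every case the zero region is a corner rectangle of the form $\{x \mathrel{R_1} \alpha\}\cap\{y \mathrel{R_2} \beta\}$ with $R_1\in\{>,\geq\}$ and $R_2\in\{<,\leq\}$; such a region is closed under increasing $x$ and under decreasing $y$, which yields (I1) and (I2) at once. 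Conditions (I3), (I4), (I5) then translate into constraints on the parameters: for C1, $I(0,y)=1$ forces $\alpha>0$ and $I(x,1)=1$ forces $\beta<1$, which is exactly why the admissible range is $\alpha\in(0,1]$, $\beta\in[0,1)$. I would run the analogous boundary computations for C2--C4, and these are precisely what pin down each stated parameter range.

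For the \emph{only if} direction, assume $I$ is crisp and set $Z=\{(x,y): I(x,y)=0\}$. First I would record the shape of $Z$ forced by monotonicity: by (I1), $I$ is nonincreasing in $x$, and by (I2), $I$ is nondecreasing in $y$, so $Z$ is down-right closed, i.e. $(x,y)\in Z$ together with $x'\geq x$ and $y'\leq y$ imply $(x',y')\in Z$. The boundary axioms give $(1,0)\in Z$ by (I5), $(\{0\}\times[0,1])\cap Z=\emptyset$ by (I3), and $([0,1]\times\{1\})\cap Z=\emptyset$ by (I4). I would then introduce the two threshold parameters $\alpha=\inf\{x: I(x,0)=0\}$ and $\beta=\sup\{y: I(1,y)=0\}$, both well defined by (I5) combined with (I3) and (I4).

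The heart of the argument is to show that $Z$ is exactly the corner rectangle determined by $\alpha$ and $\beta$. One inclusion is easy: if $(x,y)\in Z$, then by (I2) $I(x,0)\leq I(x,y)=0$ and by (I1) $I(1,y)\leq I(x,y)=0$, so $x$ lies in the $x$-threshold set and $y$ in the $y$-threshold set. I expect the reverse inclusion to be the main obstacle: from $I(x,0)=0$ and $I(1,y)=0$ one must deduce $I(x,y)=0$, whereas bare monotonicity gives only the trivial bound $I(x,y)\geq 0$. This is precisely the step at which crispness must be exploited in full (equivalently, one must argue that the $x$-threshold does not vary with $y$), and it is the delicate point where the precise hypotheses on $I$ have to be used with care rather than monotonicity alone. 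Once the rectangular shape of $Z$ is secured, I would finish with a four-way case split according to whether each defining inequality is strict or not, i.e. whether the infimum and supremum defining $\alpha$ and $\beta$ are attained; this reads off $R_1\in\{>,\geq\}$ and $R_2\in\{<,\leq\}$. Matching the four combinations to $I_{\underline{\alpha}}^{\overline{\beta}}$, $I_{\alpha}^{\beta}$, $I_{\underline{\alpha}}^{\beta}$, $I_{\alpha}^{\overline{\beta}}$, and reusing the boundary computations from the \emph{if} direction to fix the parameter ranges, then produces exactly C1--C4.
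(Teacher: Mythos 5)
The paper itself does not prove this proposition --- it is imported verbatim from \cite[Prop.~2]{PBSS18b} --- so there is no in-paper argument to compare yours against; I can only assess your plan on its own terms. Your \emph{if} direction is sound: crispness is immediate, the corner-rectangle shape of the zero region gives (I1)--(I2), and the boundary checks you describe are exactly what pins down the four parameter ranges.

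The \emph{only if} direction, however, has a genuine gap at precisely the step you flag and then leave open: deducing $I(x,y)=0$ from $I(x,0)=0$ and $I(1,y)=0$, i.e.\ showing the zero set is a product (corner rectangle). This does not follow from crispness together with (I1)--(I5), and in fact it is false for the hypotheses as stated. Consider
\[
I(x,y)=\begin{cases} 0, & \text{if } \bigl(x\geq \tfrac12 \text{ and } y\leq\tfrac14\bigr) \text{ or } \bigl(x\geq\tfrac34 \text{ and } y\leq\tfrac12\bigr),\\ 1, & \text{otherwise.}\end{cases}
\]
This is $\{0,1\}$-valued; its zero set is up-closed in $x$ and down-closed in $y$, so (I1) and (I2) hold; and (I3)--(I5) are immediate. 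Yet $(\tfrac12,0)$ and $(1,\tfrac12)$ lie in the zero set while $(\tfrac12,\tfrac12)$ does not, so the zero set is not of the form $\{x\mathrel{R_1}\alpha\}\cap\{y\mathrel{R_2}\beta\}$ and $I$ coincides with none of (C1)--(C4): a staircase-shaped zero region is perfectly compatible with being a crisp fuzzy implication. Consequently, either the proposition as quoted here is missing a hypothesis carried in the source (something forcing the $x$-threshold to be independent of $y$, such as an exchange- or contraposition-type condition), or the step you singled out cannot be closed by any argument. You were right to call it the delicate point; as the statement stands, it is not merely delicate but unprovable.
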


\begin{definition} \cite{BJ08}
	Let $I \in \mathcal{FI}$. The function $N_{I}: [0,1] \rightarrow [0,1]$ defined by
	\begin{equation} \label{NegNatural}
	N_{I}(x)= I(x, 0), \ \ x \in [0,1]
	\end{equation}
	is called \textbf{natural negation} of $I$ or negation induced by $I$.
	
\end{definition}

Observe that $N_I$ is in fact a fuzzy negation and, in case $I$ is crisp then $N_I$ is a crisp fuzzy negation. Other properties can be required for fuzzy implications. In the following, we present some that are considered in this paper:
\begin{definition}\label{prop.IF}
	A fuzzy implication function $I$ satisfies, for all $x,y,z \in [0,1]$,  the:
	
	\begin{quote}	
		\begin{itemize}
			\item[(NP)] \textbf{Left neutrality property} if and only if $\ \forall y \in [0,1] \colon \ I(1, y) = y$;
			
			\item[(IP)] \textbf{Identity principle} if and only if 
			$\forall x \in [0,1] \colon$ $I(x, x) = 1$;
			
			\item[(EP)] \textbf{Exchange principle} if and only if $\, \forall  x, y, z \in [0,1] \colon \ I(x, I(y, z)) = I(y, I(x, z))$;
			
			\item[(EP1)] \textbf{Exchange principle for} $\mathbf{1}$ if and only if $\, \forall  x, y, z$  $\in [0,1] \colon I(x, I(y, z)) = 1 \Rightarrow  I(y, I(x, z)) = 1$;
			
			\item[(IB)] \textbf{Iterative Boolean law} if and only if $ \forall  x, y \in [0,1]\colon$ $I(x, I(x,y)) = I(x, y)$;
			
			\item[(LOP)] \textbf{Left-ordering property}, if, for all $x, y \in [0,1] \colon$ $ I(x, y) = 1 $ whenever $x \leq y;$

			\item[(ROP)] \textbf{Right-ordering property}, if, for all $x, y \in [0,1] \colon$ $ I(x, y) \neq 1 $ whenever $x > y$.  

			\item[(CP)] \textbf{Law of contraposition} (or in other words, the contrapositive symmetry) with respect to fuzzy negation $N$, if and only if $\ \forall  x, y \in [0,1] \colon$
			
			$I(x, y) = I(N(y), N(x))$;
			
			\item[(L-CP)] \textbf{Law of left contraposition} with respect to fuzzy negation $N$ if and only if $\, \forall  x, y \in [0,1] \colon$
				\begin{eqnarray*} \label{LCP}
					I(N(x), y) = I(N(y), x);
				\end{eqnarray*} 
			
			\item[(R-CP)] \textbf{Law of right contraposition} with respect to fuzzy negation $N$ if and only if $\ \forall  x, y \in [0,1] \colon$
				\begin{eqnarray*}
					I(x, N(y)) = I(y, N(x)).
				\end{eqnarray*} 
		\end{itemize}	
	\end{quote}
	
\end{definition}

If $I$ satisfies the (left, right) contrapositive symmetry with respect to $N$, then we also denote this by \emph{CP(N)}, respectively, by $L{-}CP(N)$ and $R{-}CP(N)$.

It is well known that from binary operations on the unit interval [0,1], for instance, from t-norms, t-conorms ($N$-dual of t-norms) and fuzzy negations, it is possible to obtain families of fuzzy implications, \cite{BJ08}.  Nevertheless, we can also use overlap and grouping functions to obtain other families of implication functions, such as $(G,N),QL$ and $R_{O}$-implication functions, defined as follows.

\begin{definition}\label{def-impl-varias}
	Let $O$ be an overlap function, $G$ be a grouping function and $N$ be a fuzzy negation. Then, the functions $I_{G,N},I_{O,G,N_{\top}},I_O,I_G^D:[0, 1]^2 \rightarrow [0, 1]$ are called:
	\begin{enumerate}
		\item $\mathbf{(G,N)}$\textbf{-implication}, given by \cite{DBS14}, if
		\begin{eqnarray}\label{eq-GN-impl}
		I_{G,N}(x,y) = G(N(x),y);
		\end{eqnarray}
		\item $\mathbf{QL}$\textbf{-implication}, given by \cite{Dimuro2017-ql}, if
		\begin{eqnarray*}
			I_{O,G,N_{\top}} (x,y)        =  \left\{ \begin{array}{ll}
				G(0,O(1,y)) & \mbox{if} \; x =1\\
				1  & \mbox{if} \; x <1;
			\end{array} \right.
		\end{eqnarray*}
		\item A residual $\mathbf{R_O}$\textbf{-implication}, given by  \cite{Dimuro2015}, if
		\begin{eqnarray*}
			I_{O}(x,y) = \max\{ z \in [0,1] \mid O(x,z) \leq y \};
		\end{eqnarray*}
		\item $D$\textbf{-implication} derived from $G$, given by \cite{d-impl}, if
		\begin{eqnarray*}\label{eq-D-top}
			I_{G}^D (x,y)= \left \{ \begin{array}{ll}
				G(0,y) & \mbox{if} \; x=1, \\
				1 & \mbox{otherwise.}
			\end{array} \right.\end{eqnarray*}
	\end{enumerate}
\end{definition}

\section{$(\mathcal{GO},N)$-Implications}

In \cite{PBSS17,Vania-Nafips18,PBSS18}, a class of fuzzy implication was investigated, named  $(T,N)$-implications which were introduced in \cite{Bedregal07} and were constructed from the composition of a fuzzy negation and a t-norm. In those works various properties of $(T,N)$-implications were also discussed.

In this sense, we now study an analogous class of implication by replacing the t-norm by a bivariate general  overlap function. Thus, we provide a new class of implication function called $(\mathcal{GO},N)$-implications, defined as follows.

\begin{definition}
	A function $I:[0, 1]^2 \rightarrow [0, 1]$ is called a $(\mathcal{GO},N)$\textbf{-implication} if there exist a bivariate general  overlap function $\mathcal{GO}$ and a fuzzy negation $N$ such that
	\begin{equation}\label{NossaIMP_FOverlap}
	I(x,y) = N(\mathcal{GO}(x, N(y))),
	\end{equation}
	for all $x,y \in [0,1]$. If $N$ is strict, then $I$ is called strict $(\mathcal{GO},N)$-implication. Analogously, if $N$ is strong, $I$ is called strong $(\mathcal{GO},N)$-implication.
\end{definition}

From now on, if $I$ is a $(\mathcal{GO},N)$-implication function generated from $\mathcal{GO}$ and $N$, then we will denote that function by $I_{\mathcal{GO}}^{N}$.

\begin{example}
	We can construct some examples of $I_{\mathcal{GO}}^{N}$.
	
	\begin{enumerate}
		\item [(i)] Consider the general overlap function:
		
		$\mathcal{GO}_{max}(x,y)=\max\{0,x^2+y^2-1\}$ and the standard fuzzy negation $N_Z(x)=1-x$, then we have that:
		\begin{eqnarray*}
			I_{\mathcal{GO}_{max}}^{N_Z}(x,y)=\min\{1, 1-x^2-y^2+2y \}
		\end{eqnarray*}
		\item [(ii)] Consider the general overlap function:
		
		$\mathcal{GO}^{T_L}(x,y)= (\min\{x,y\})^p \ast \max\{0,x+y-1\}$, for $p=2$ and $N_Z(x)=1-x$, then we have that:
		\begin{eqnarray*}
			I_{\mathcal{GO}^{T_L}}^{N_Z}\hspace{-1.5ex}&(&\hspace{-1.0ex}x,y)= 1\hspace{-0.5ex}-\hspace{-0.5ex}(\min\{x^2, y^2\hspace{-0.5ex}-\hspace{-0.5ex}2y\hspace{-0.5ex}+\hspace{-0.5ex}1 \} \ast \max\{0,x\hspace{-0.5ex}-\hspace{-0.5ex}y\} )
		\end{eqnarray*}
		\item [(iii)] Consider the general overlap function $\mathcal{GO}_{max}$ and the crisp fuzzy negation $N_{\alpha}$, then we have that:	
		\begin{eqnarray*}
			I_{\mathcal{GO}_{max}}^{N_{\alpha}}(x,y)= \left \{ \begin{array}{ll}
				0, & \mbox{if } \; y \leq \alpha \mbox{ and } x^2 > \alpha. \\
				1, & \mbox{if } y > \alpha \mbox{, or } y \leq \alpha \mbox{ and } x^2 \leq \alpha.
			\end{array} \right.\end{eqnarray*}
		
		\item [(iv)] Consider the general overlap function $\mathcal{GO}^{T_L}$, for $p=2$ and the crisp fuzzy negation $N^{\alpha}$, then we have that:
		\begin{eqnarray*}
			I_{\mathcal{GO}^{T_L}}^{N^{\alpha}}(x,y)= \left \{ \begin{array}{ll}
				0, & \mbox{if } \; y < \alpha \mbox{ and } x^3 \geq \alpha.\\
				1, & \mbox{if } y \geq \alpha \mbox{, or } y < \alpha \mbox{ and } x^3 < \alpha.
			\end{array} \right.\end{eqnarray*}
	\end{enumerate}
	
\end{example}

\begin{proposition} \label{nossaIFOverlap}
	If $I$ is a $(\mathcal{GO},N)$-implication then $I \in \mathcal{FI}$.\end{proposition}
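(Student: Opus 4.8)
The plan is to verify directly that the formula $I(x,y)=N(\mathcal{GO}(x,N(y)))$ satisfies each of the five defining conditions \ref{I1}--\ref{I5} of Definition \ref{ImpFuzzy}, invoking only the monotonicity and boundary properties of the bivariate general overlap function $\mathcal{GO}$ together with the antitonicity and boundary values of $N$. The whole argument amounts to composing two monotone maps and then evaluating at the extreme arguments, so no deep machinery is required; in particular, the continuity axiom ($\mathcal{GO}5$) is never used for mere membership in $\mathcal{FI}$.

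For the two monotonicity conditions I would compose the monotone behaviour of the inner function with the antitonicity of $N$. For \ref{I1}, if $x\le z$ then ($\mathcal{GO}4$) gives $\mathcal{GO}(x,N(y))\le \mathcal{GO}(z,N(y))$, and applying \ref{N1} reverses the inequality to yield $I(x,y)\ge I(z,y)$. For \ref{I2}, if $y\le z$ then \ref{N1} gives $N(z)\le N(y)$, whence ($\mathcal{GO}4$) yields $\mathcal{GO}(x,N(z))\le \mathcal{GO}(x,N(y))$, and a final application of \ref{N1} gives $I(x,y)\le I(x,z)$. Each is just a short chain of monotone and antitone steps.

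For the three boundary conditions I would feed the extreme arguments into the formula and use ($\mathcal{GO}2$), ($\mathcal{GO}3$) and \ref{N2}. Condition \ref{I3}: since $0\cdot N(y)=0$, ($\mathcal{GO}2$) forces $\mathcal{GO}(0,N(y))=0$, so $I(0,y)=N(0)=1$. Condition \ref{I4}: since $N(1)=0$ by \ref{N2}, the product $x\cdot 0=0$ gives $\mathcal{GO}(x,0)=0$ by ($\mathcal{GO}2$), so $I(x,1)=N(0)=1$. Condition \ref{I5}: since $N(0)=1$ by \ref{N2}, the product equals $1$ and ($\mathcal{GO}3$) gives $\mathcal{GO}(1,1)=1$, so $I(1,0)=N(1)=0$.

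I do not anticipate a genuine obstacle, as every step is an immediate consequence of a single stated property. The only point deserving care is to invoke the boundary axioms ($\mathcal{GO}2$) and ($\mathcal{GO}3$) in the correct direction, namely that a zero (respectively unit) product of the inputs forces the output to equal $0$ (respectively $1$); these are precisely the ``if'' halves of the general overlap axioms and are exactly what is needed, whereas the sharper converses ($\mathcal{GO}2a$) and ($\mathcal{GO}3a$) from Proposition \ref{prop-dual} play no role in this statement.
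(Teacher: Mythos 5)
Your proposal is correct and follows essentially the same route as the paper's own proof: verify \ref{I1}--\ref{I2} by composing ($\mathcal{GO}$4) with the antitonicity \ref{N1} of $N$, and obtain \ref{I3}--\ref{I5} by evaluating at the boundary using ($\mathcal{GO}$2), ($\mathcal{GO}$3) and \ref{N2}. Your added observations that ($\mathcal{GO}$5) and the converse conditions ($\mathcal{GO}$2a), ($\mathcal{GO}$3a) are not needed are accurate but do not change the argument.
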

\begin{proof}
	Indeed, let $I$ be a $(\mathcal{GO},N)$-implication function generated by a general overlap function $\mathcal{GO}$ and a fuzzy negation $N$, then
	
	\noindent $\mathbf{\ref{I1}}$ Given $x, y \in [0,1]$ such that $ x \leq y$, by ($\mathcal{GO}$4), for all $z \in [0,1]$, it holds that $\mathcal{GO}(x,N(z)) \leq \mathcal{GO}(y,N(z))$. So, $N(\mathcal{GO}(y,N(z))) \leq N(\mathcal{GO}(x,N(z)))$, that is, $I_{\mathcal{GO}}^{N}(y,z) \leq I_{\mathcal{GO}}^{N}(x,z)$.
	
	\noindent $\mathbf{\ref{I2}}$ Analogous to $\mathbf{\ref{I1}}$.
	
	\noindent $\mathbf{\ref{I3}}$ For all $y \in [0,1]$, $I_{\mathcal{GO}}^{N}(0,y) = N(\mathcal{GO}(0,N(y))) \stackrel{(\mathcal{GO}2)}{=} N(0) = 1$.
	
	\noindent $\mathbf{\ref{I4}}$ For all $x \in [0,1]$, $I_{\mathcal{GO}}^{N}(x,1) = N(\mathcal{GO}(x,N(1))) = N(\mathcal{GO}(x,0)) \stackrel{(\mathcal{GO}2)}{=} N(0) = 1$.
	
	\noindent $\mathbf{\ref{I5}}$ $I_{\mathcal{GO}}^{N}(1,0) = N(\mathcal{GO}(1,N(0))) = N(\mathcal{GO}(1,1)) \stackrel{(\mathcal{GO}3)}{=} N(1) = 0$.
	
	\noindent Therefore, $I_{\mathcal{GO}}^{N}$ is a fuzzy implication function. 				
\end{proof}


\begin{proposition}
	Let $N$ be a strict fuzzy negation and $\mathcal{GO}$ be a general overlap function. If $\mathcal{GO}$ has no neutral element, then $I_{\mathcal{GO}}^{N} \neq I_{T}^{N}$ for all t-norm $T$.
\end{proposition}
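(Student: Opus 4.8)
The plan is to argue by contradiction and to isolate the one place where a t-norm and a neutral-element-free general overlap function must disagree, namely on the second-argument boundary $y=0$ (equivalently, through their induced natural negations). So I would suppose, towards a contradiction, that there exists a t-norm $T$ with $I_{\mathcal{GO}}^{N}=I_{T}^{N}$, where $I_{T}^{N}(x,y)=N(T(x,N(y)))$ is the $(T,N)$-implication built from the same negation $N$. Agreement everywhere forces agreement at $y=0$ in particular.

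First I would evaluate both sides at $y=0$, using $N(0)=1$ from \ref{N2}. Since every t-norm satisfies $T(x,1)=x$ by (T3), the t-norm side collapses to
\[
I_{T}^{N}(x,0)=N(T(x,N(0)))=N(T(x,1))=N(x),
\]
while the general-overlap side gives
\[
I_{\mathcal{GO}}^{N}(x,0)=N(\mathcal{GO}(x,N(0)))=N(\mathcal{GO}(x,1)).
\]
Equating them for every $x\in[0,1]$ produces $N(\mathcal{GO}(x,1))=N(x)$.

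Next I would cancel the outer $N$. Strictness supplies injectivity: by \ref{N4} a strict negation is strictly decreasing, hence one-to-one, so $N(\mathcal{GO}(x,1))=N(x)$ yields $\mathcal{GO}(x,1)=x$ for all $x\in[0,1]$. By commutativity ($\mathcal{GO}1$) this also gives $\mathcal{GO}(1,x)=x$, so $1$ is a neutral element of $\mathcal{GO}$, contradicting the hypothesis that $\mathcal{GO}$ has none. Hence no such $T$ can exist.

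I do not expect a genuine obstacle here; the content of the argument lies entirely in the choice of test point. Testing at $y=0$ is what exposes the obstruction, because $N(0)=1$ feeds the value $1$ into the second slot, precisely the input at which any t-norm is pinned to the identity but a neutral-element-free $\mathcal{GO}$ need not be. The only point requiring a moment's care is to read \emph{``$\mathcal{GO}$ has no neutral element''} as covering the value $1$, so that excluding $1$ as a neutral element is already a contradiction; this is immediate from the definition of neutral element recalled above.
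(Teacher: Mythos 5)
Your proof is correct and is essentially the argument in the paper: both reduce the comparison to inputs where the underlying conjunction receives the value $1$ (you via $y=0$, so $N(0)=1$; the paper by fixing the first argument at $1$), where $T(x,1)=x$ is forced but $\mathcal{GO}(\cdot,1)$ cannot be the identity since $\mathcal{GO}$ has no neutral element. Your contradiction variant is marginally leaner in that it only needs injectivity of the strict negation, whereas the paper also uses its surjectivity to exhibit an explicit point $(1,\tilde{x})$ of disagreement.
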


\begin{proof}
	By hypothesis, $\mathcal{GO}$ has no neutral element, so there is $\tilde{y} \in (0,1)$ such that $\mathcal{GO}(1, \tilde{y}) \neq \tilde{y}$. Since $N$ is strict, given $\tilde{y} \in (0,1)$, there is $\tilde{x} \in (0,1)$ such that $N(\tilde{x}) = \tilde{y}$. So,
	\begin{eqnarray*}
		\mathcal{GO}(1, N(\tilde{x})) \hspace{-0.5ex} \neq \hspace{-0.5ex} N(\tilde{x}) \hspace{-0.7ex}&\stackrel{N \ \text{strict}}{\Rightarrow}&\hspace{-0.7ex} N(\mathcal{GO}(1, N(\tilde{x}))) \neq N(N(x))\\
		&\Rightarrow& I_{\mathcal{GO}}^{N}(1, N(\tilde{x})) \neq N(N(\tilde{x})).
	\end{eqnarray*}
	On the other hand, for all t-norm $T$,
	\begin{eqnarray*}
		I_{T}^{N}(1, N(\tilde{x})) &=& N(T(1, N(\tilde{x}))) = N(N(\tilde{x}))\\
		& \neq& I_{\mathcal{GO}}^{N}(1, N(\tilde{x})).
	\end{eqnarray*}
	Therefore, $I_{\mathcal{GO}}^{N} \neq I_{T}^{N}$.
\end{proof}

\begin{example}
	Consider the general overlap function and the strict fuzzy negation given by $\mathcal{GO}_{max}(x,y)=\max\{0,x^2+y^2-1\}$ and $N(x)=1-x^2$, respectively, then we have that:
	\begin{eqnarray*}
		I_{\mathcal{GO}_{max}}^{N}(x,y)&=& N(\mathcal{GO}_{max}(x, N(y)))\\
		&=& 1 - \big(\max\{0,x^2+(1-y^2)^2-1\}\big)^2.
	\end{eqnarray*}
	Observe that $I_{\mathcal{GO}_{max}}^{N}(1,y) = 1 - \big(\max\{0,1+(1-y^2)^2-1\}\big)^2 = 1 - \big(1-y^2\big)^4 $ and for all t-norm $T$, $I_{T}^{N}(1,y) = N(T(1, N(y))) = N(N(y)) = 1- (1- y^2)^2$. Therefore, for all $y \in (0,1)$, $I_{\mathcal{GO}_{max}}^{N}(1,y) \neq I_{T}^{N}(1,y)$.
\end{example}

Observe that it is possible to recover the bivariate general overlap function from any $(\mathcal{GO},N)$-implication function which was constructed from such general overlap function and a strict fuzzy negation, as shown in the following proposition:

\begin{proposition}\label{prop-GO-from-GON-Imp}
	Let $\mathcal{GO}$ be a bivariate general overlap function and $N$ be a fuzzy negation. If $ N $ is strict, then
	$\mathcal{GO}(x,y) = N^{-1}(I_{\mathcal{GO}}^{N}(x, N^{-1}(y)))$,
	for all $x, y \in [0,1]$.
\end{proposition}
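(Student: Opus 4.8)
The plan is to reduce the claimed identity to a one-line substitution, relying only on the definition of the $(\mathcal{GO},N)$-implication, namely $I_{\mathcal{GO}}^{N}(x,y) = N(\mathcal{GO}(x,N(y)))$, together with the invertibility of the strict negation $N$. So the real content is to first make sense of $N^{-1}$ and then cancel $N$ against $N^{-1}$.

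First I would establish that $N^{-1}$ is well defined. Since $N$ is strict, it is continuous by \ref{N3} and strictly decreasing by \ref{N4}, with $N(0)=1$ and $N(1)=0$ by \ref{N2}. Strict monotonicity gives injectivity, while continuity together with these boundary values gives, via the intermediate value theorem, that $N$ maps $[0,1]$ onto $[0,1]$. Hence $N$ is a bijection of $[0,1]$ and admits a two-sided inverse $N^{-1}$ satisfying $N(N^{-1}(t)) = t = N^{-1}(N(t))$ for every $t \in [0,1]$.

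Next I would simply unfold the right-hand side. Fixing $x,y \in [0,1]$, the definition of the implication gives
\begin{eqnarray*}
I_{\mathcal{GO}}^{N}(x, N^{-1}(y)) = N\big(\mathcal{GO}(x, N(N^{-1}(y)))\big) = N\big(\mathcal{GO}(x,y)\big),
\end{eqnarray*}
where the second equality uses $N(N^{-1}(y)) = y$. Applying $N^{-1}$ to both sides and using $N^{-1}(N(t)) = t$ then yields
\begin{eqnarray*}
N^{-1}\big(I_{\mathcal{GO}}^{N}(x, N^{-1}(y))\big) = N^{-1}\big(N(\mathcal{GO}(x,y))\big) = \mathcal{GO}(x,y),
\end{eqnarray*}
which is exactly the asserted equality.

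I do not expect any genuine obstacle in this argument: the only step requiring a word of justification is the existence of the inverse $N^{-1}$, which is immediate from the strictness hypothesis, and everything else is the cancellation of $N$ with $N^{-1}$ on both sides of the defining equation. The strictness of $N$ is essential, since for a merely non-strict negation $N^{-1}$ need not exist and the recovery of $\mathcal{GO}$ from $I_{\mathcal{GO}}^{N}$ would fail.
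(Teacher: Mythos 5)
Your proof is correct and matches what the paper intends: the paper's own proof is simply ``Straightforward,'' and the argument it has in mind is precisely your substitution of $N^{-1}(y)$ into the defining equation $I_{\mathcal{GO}}^{N}(x,y)=N(\mathcal{GO}(x,N(y)))$ followed by cancellation of $N$ against $N^{-1}$. Your added justification that a strict negation is a bijection of $[0,1]$ (so that $N^{-1}$ exists as a two-sided inverse) is the only nontrivial point, and you handle it correctly.
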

\begin{proof}
	Straightforward.
\end{proof}

\begin{corollary}
	Let $\mathcal{GO}$ be a bivariate general overlap function and $N$ be a fuzzy negation. If $ N $ is strong, then $\mathcal{GO}(x,y) = N(I_{\mathcal{GO}}^{N}(x, N(y)))$, for all $x, y \in [0,1]$.
\end{corollary}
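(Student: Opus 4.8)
The plan is to deduce this directly from Proposition~\ref{prop-GO-from-GON-Imp} by exploiting the two defining features of a strong negation. First I would recall that every strong negation is strict: indeed, \ref{N5} forces $N$ to be injective (if $N(x)=N(y)$ then $x=N(N(x))=N(N(y))=y$), and together with the antitonicity \ref{N1} and the boundary conditions \ref{N2} this yields continuity and strict monotonicity, so \ref{N3} and \ref{N4} hold. Hence the hypothesis of Proposition~\ref{prop-GO-from-GON-Imp} is met and we may legitimately invoke its conclusion.

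Second, I would observe that a strong negation is an involution, so it coincides with its own inverse: from $N(N(x))=x$ for all $x\in[0,1]$ we obtain $N^{-1}=N$. Substituting $N^{-1}=N$ into the identity $\mathcal{GO}(x,y)=N^{-1}(I_{\mathcal{GO}}^{N}(x,N^{-1}(y)))$ supplied by the proposition then yields $\mathcal{GO}(x,y)=N(I_{\mathcal{GO}}^{N}(x,N(y)))$, exactly as claimed.

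Alternatively — and this is really the cleanest route — I would verify the identity by a one-line direct computation that never passes through the inverse. Unfolding the definition of $I_{\mathcal{GO}}^{N}$ twice,
\begin{equation*}
N(I_{\mathcal{GO}}^{N}(x,N(y))) = N\bigl(N(\mathcal{GO}(x,N(N(y))))\bigr),
\end{equation*}
and then applying \ref{N5} to both the outer composition $N\circ N$ and to the inner argument $N(N(y))=y$ collapses the right-hand side to $\mathcal{GO}(x,y)$.

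There is essentially no obstacle here: the content is entirely in the two elementary facts that a strong negation is strict (so that the parent proposition applies) and that it is its own inverse (so that $N^{-1}$ may be replaced by $N$). The only point requiring a moment's care is confirming strictness, to make the appeal to Proposition~\ref{prop-GO-from-GON-Imp} rigorous; once that is granted the statement is immediate, and the direct computation above even sidesteps the inverse altogether.
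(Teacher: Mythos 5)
Your proposal is correct and follows the route the paper intends: the corollary is stated as an immediate consequence of Proposition~\ref{prop-GO-from-GON-Imp}, obtained by noting that a strong negation is strict and satisfies $N^{-1}=N$, which is exactly your first argument (and your direct unfolding of $N(N(\mathcal{GO}(x,N(N(y)))))=\mathcal{GO}(x,y)$ is an equally valid shortcut). Nothing is missing.
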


\begin{proposition}\label{prop-neutral-e}
	Let $\mathcal{GO}$ and $N$ be a bivariate general overlap function and a fuzzy negation, respectively. Then,
	\begin{enumerate}
		\item [(i)] If $1$ is the neutral element of $\mathcal{GO}$, then $N_{I_{\mathcal{GO}}^{N}} = N$;
		\item [(ii)] If $N$ is strict and $N_{I_{\mathcal{GO}}^{N}} = N$, then $1$ is the neutral element of $\mathcal{GO}$.
	\end{enumerate}
	
\end{proposition}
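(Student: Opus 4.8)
The plan is to reduce both parts to a single computation of the natural negation $N_{I_{\mathcal{GO}}^{N}}$ and then exploit, in the converse direction, the injectivity that strictness grants to $N$. First I would unfold the definitions: by Eq.~(\ref{NegNatural}) and Eq.~(\ref{NossaIMP_FOverlap}),
\[
N_{I_{\mathcal{GO}}^{N}}(x) = I_{\mathcal{GO}}^{N}(x,0) = N(\mathcal{GO}(x, N(0))) = N(\mathcal{GO}(x,1)),
\]
where the last equality uses \ref{N2}, namely $N(0)=1$. Since $\mathcal{GO}$ is bivariate, the meaning of ``$1$ is the neutral element of $\mathcal{GO}$'' is simply $\mathcal{GO}(x,1)=x$ for every $x\in[0,1]$, so this identity is the hinge for both implications.

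For item (i), assuming $1$ is the neutral element, I would substitute $\mathcal{GO}(x,1)=x$ into the displayed expression to obtain $N_{I_{\mathcal{GO}}^{N}}(x)=N(x)$ for all $x$, which is exactly $N_{I_{\mathcal{GO}}^{N}}=N$. This direction requires no hypothesis on $N$ beyond it being a fuzzy negation.

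For item (ii), assuming $N$ is strict and $N_{I_{\mathcal{GO}}^{N}}=N$, the same computation gives $N(\mathcal{GO}(x,1))=N(x)$ for all $x\in[0,1]$. The decisive step is that a strict negation, being continuous and strictly decreasing by \ref{N3} and \ref{N4}, is in particular injective; cancelling $N$ on both sides then yields $\mathcal{GO}(x,1)=x$ for all $x$, i.e.\ $1$ is the neutral element of $\mathcal{GO}$.

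The argument is essentially a one-line computation in each direction, so there is no serious obstacle; the only point that genuinely uses the strictness hypothesis is the cancellation of $N$ in (ii), and it is worth emphasising that this is exactly where strictness is indispensable, since a non-injective (e.g.\ crisp) negation could identify $\mathcal{GO}(x,1)$ with $x$ without forcing them to be equal.
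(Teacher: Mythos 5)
Your proposal is correct and follows essentially the same route as the paper: the same one-line computation $N_{I_{\mathcal{GO}}^{N}}(x)=N(\mathcal{GO}(x,N(0)))=N(\mathcal{GO}(x,1))$ for part (i), and for part (ii) the paper cancels $N$ by applying $N^{-1}$, which is exactly the injectivity argument you give. Your closing remark about non-injective negations is also consonant with the example the paper gives immediately afterwards using $N_{\top}$.
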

\begin{proof}
	Indeed,
	\begin{enumerate}
		\item [(i)] $\forall x \hspace{-0.1cm}\in \hspace{-0.1cm} [0,1]$, $N_{I_{\mathcal{GO}}^{N}}(x)\hspace{-0.1cm} =\hspace{-0.1cm} I_{\mathcal{GO}}^{N}(x,0)\hspace{-0.1cm} =\hspace{-0.1cm} N(\mathcal{GO}(x, N(0)))\hspace{-0.1cm} = \hspace{-0.1cm} N(\mathcal{GO}(x,1))\hspace{-0.1cm} = \hspace{-0.1cm}N(x)$.
		
		\item [(ii)] Since $N$ is strict and $N_{I_{\mathcal{GO}}^{N}} = N$, for all $x \in [0,1]$, one has that: 
		\begin{eqnarray*}
			\mathcal{GO}(x,1)\hspace{-0.1cm} &=& \hspace{-0.1cm} N^{-1}(N(\mathcal{GO}(x,N(0)))) = N^{-1}(I_{\mathcal{GO}}^{N}(x,0))\\
			& =& \hspace{-0.1cm} N^{-1}(N(x))\hspace{-0.1cm} = \hspace{-0.1cm}x.
		\end{eqnarray*}
	\end{enumerate}	
\end{proof}

Note that the converse of Prop. \ref{prop-neutral-e}(i) is not always true, i.e. there are non-strict negations $N$ that satisfy $N_{I_{\mathcal{GO}}^{N}} = N$, but $\mathcal{GO}$ has no neutral element. See the following example:

\begin{example}
	Take the fuzzy negation $N_{\top}$ given by \[N_{\top}(x) = \begin{cases}
	0, & \text{ if }  x = 1 \\
	1, & \text{ if }  x \neq 1
	\end{cases}, \] and consider a bivariate  general overlap function $\mathcal{GO}$ that satisfies ($\mathcal{GO}$3a). Then, for all $x \in [0,1]$, one has that:
	\begin{eqnarray*}
		N_{I_{\mathcal{GO}}^{N_\top}}(x) &=& I_{\mathcal{GO}}^{N_\top}(x,0) = 
		N_\top(\mathcal{GO}(x,1))\\
		&=& \begin{cases}
			0, & \text{ if }  \mathcal{GO}(x,1) = 1 \\
			1, & \text{ if }  \mathcal{GO}(x,1) \neq 1
		\end{cases}\\
		&\stackrel{(\mathcal{GO}\text{3a})}{=}&  \begin{cases}
			0, & \text{ if }  x = 1 \\
			1, & \text{ if }  x \neq 1
		\end{cases}
		= N_\top(x).
	\end{eqnarray*}
	However,  $\mathcal{GO}$ does not   necessarily have  a neutral element.
\end{example}

\begin{proposition}
	Let $\mathcal{GO}$ be a bivariate general overlap function and $N$ be a fuzzy negation such that $x \leq N(N(x))$, for all $x \in [0,1]$. Then:
	\begin{enumerate}
		\item [(i)] If $\mathcal{GO}(1,y) \leq y$, then $y \leq I_{\mathcal{GO}}^{N}(x, y)$;
		\item [(ii)] If $N$ is strict and $y \leq I_{\mathcal{GO}}^{N}(x, y)$, then $\mathcal{GO}(1,y) \leq y$.
	\end{enumerate}
	
\end{proposition}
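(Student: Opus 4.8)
The plan is to unfold the definition $I_{\mathcal{GO}}^{N}(x,y)=N(\mathcal{GO}(x,N(y)))$ in both parts and then manipulate inequalities using only the monotonicity of $\mathcal{GO}$, the antitonicity of $N$ (property \ref{N1}), and the standing assumption $x\le N(N(x))$.

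For (i), I would argue by a single chain of inequalities. Fix $x,y\in[0,1]$. Since $x\le 1$, property $(\mathcal{GO}4)$ gives $\mathcal{GO}(x,N(y))\le \mathcal{GO}(1,N(y))$; applying the hypothesis $\mathcal{GO}(1,t)\le t$ at $t=N(y)$ yields $\mathcal{GO}(1,N(y))\le N(y)$, so $\mathcal{GO}(x,N(y))\le N(y)$. Applying the antitonic $N$ reverses this to $I_{\mathcal{GO}}^{N}(x,y)=N(\mathcal{GO}(x,N(y)))\ge N(N(y))$, and the standing assumption $y\le N(N(y))$ closes the argument. Note that (i) uses neither continuity nor strictness of $N$, only $(\mathcal{GO}4)$, \ref{N1} and $y\le N(N(y))$.

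For (ii), the decisive observation, and what I expect to be the main obstacle, is that the two hypotheses on $N$ are in fact rigid: a strict negation satisfying $x\le N(N(x))$ for all $x$ is necessarily strong. I would establish this first, since it is exactly what makes the target term $\mathcal{GO}(1,y)$ (rather than the unwanted $\mathcal{GO}(1,N(y))$) accessible. Applying the strictly decreasing $N$ to $x\le N(N(x))$ gives $N(x)\ge N(N(N(x)))$ for every $x$; because $N$ is onto, every $t\in[0,1]$ equals $N(x)$ for some $x$, and rewriting the right-hand side turns this into $t\ge N(N(t))$ for all $t$. Combined with the assumed $t\le N(N(t))$, this forces $N(N(t))=t$, so $N$ is strong.

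Once $N$ is known to be strong, I would finish as follows. As $\mathcal{GO}$ is increasing in its first argument, $I_{\mathcal{GO}}^{N}(x,y)=N(\mathcal{GO}(x,N(y)))$ is smallest at $x=1$, so the hypothesis $y\le I_{\mathcal{GO}}^{N}(x,y)$ is tightest there and it suffices to use $y\le I_{\mathcal{GO}}^{N}(1,y)=N(\mathcal{GO}(1,N(y)))$. Replacing $y$ by $N(y)$ (legitimate because $N$ is a bijection) and using $N(N(y))=y$ gives $N(y)\le N(\mathcal{GO}(1,y))$; applying $N$ once more and invoking strongness again yields $\mathcal{GO}(1,y)\le y$, as required. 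I expect the only genuinely subtle step to be the collapse to strongness: a direct computation via Proposition \ref{prop-GO-from-GON-Imp} only delivers $\mathcal{GO}(1,y)\le N^{-1}(N^{-1}(y))$, which is useless until one recognizes that $N^{-1}\circ N^{-1}$ is the identity.
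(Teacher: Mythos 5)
Your proof is correct. Part (i) is essentially the paper's own argument: the same chain $\mathcal{GO}(x,N(y))\le\mathcal{GO}(1,N(y))\le N(y)$, followed by an application of the antitonic $N$ and the standing hypothesis $y\le N(N(y))$. In part (ii) you take a slightly different route. You first prove that a strict negation satisfying $x\le N(N(x))$ for all $x$ is necessarily strong (which is true, via surjectivity of $N$), and then finish by two applications of strongness. The paper avoids this detour: it applies the standing hypothesis $x\le N(N(x))$ directly to the point $x=\mathcal{GO}(1,N(y))$, obtaining $\mathcal{GO}(1,N(y))\le N(N(\mathcal{GO}(1,N(y))))\le N(y)$ from $y\le N(\mathcal{GO}(1,N(y)))$, and then substitutes $y\mapsto N^{-1}(y)$, which only uses the identity $N(N^{-1}(y))=y$ rather than full strongness. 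Your version is a bit longer but has the merit of making explicit a rigidity hidden in the hypotheses (they collapse to ``$N$ strong''), which clarifies why the statement is phrased the way it is; the paper's version is more economical and never names that fact. Both are sound, and your closing remark about $N^{-1}\circ N^{-1}$ being the identity is exactly the observation that reconciles the two.
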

\begin{proof}
	Indeed,
	\begin{enumerate}
		\item [(i)] By hypothesis, take $\mathcal{GO}(1,N(y)) \leq N(y)$. Then, applying $N$ on both sides, $N(N(y)) \leq N(\mathcal{GO}(1, N(y)))$. On the other hand,
		\begin{eqnarray*}
			x \leq 1 &\stackrel{(\mathcal{GO}4)}{\Rightarrow}& \mathcal{GO}(x, N(y)) \leq \mathcal{GO}(1, N(y))\\
			&\Rightarrow& N(\mathcal{GO}(1, N(y))) \leq N(\mathcal{GO}(x, N(y)))
		\end{eqnarray*}
		for all $x, y \in [0,1]$. So, it follows that $y \leq N(N(y)) \leq N(\mathcal{GO}(1, N(y))) \leq N(\mathcal{GO}(x, N(y)))$, and, therefore,  $y \leq I_{\mathcal{GO}}^{N}(x, y)$.
		
		\item [(ii)] Since $y \leq I_{\mathcal{GO}}^{N}(x, y)$, for all $x, y \in [0,1]$, so, in particular, $y \leq I_{\mathcal{GO}}^{N}(1, y)$, for all $y \in [0,1]$. Moreover, $y \leq N(\mathcal{GO}(1, N(y))) \stackrel{\ref{N1}}{\Rightarrow} N(N(\mathcal{GO}(1, N(y)))) \leq N(y)$, hence, by hypothesis,  
		\begin{eqnarray*}
			\mathcal{GO}(1, N(y)) \leq N(N(\mathcal{GO}(1, N(y)))) \leq N(y),
		\end{eqnarray*} 
		for all $y \in [0,1]$. So, $\mathcal{GO}(1,y) = \mathcal{GO}(1, N(N^{-1}(y))) \leq N(N^{-1}(y)) = y$, since $N$ is strict. Therefore, for all $y \in [0,1]$, $\mathcal{GO}(1,y) \leq y$.
		
	\end{enumerate}
	
\end{proof}

\begin{proposition} \label{Prop.(O,N)}
	Let $I_{\mathcal{GO}}^{N}$ be a $(\mathcal{GO},N)$-implication. Then:
	\begin{enumerate}
		\item [(i)] $I_{\mathcal{GO}}^{N}$ satisfies L-CP(N);
		\item [(ii)] If $N$ is a strict negation, then $I_{\mathcal{GO}}^{N}$ satisfies R-CP(N$^{-1}$).
		\item [(iii)] If $I_{\mathcal{GO}}^{N}$ satisfies R-CP(N) with a strict negation $N$ and $1$ is the neutral element of $\mathcal{GO}$, then $N$ is a strong negation;
		\item [(iv)] If $N$ is a strong negation, then $I_{\mathcal{GO}}^{N}$ satisfies CP(N).
		\item [(v)] If $I_{\mathcal{GO}}^{N}$ satisfies CP(N) with a strict negation $N$  and $1$ is the neutral element of $\mathcal{GO}$, then $N$ is a strong negation.
	\end{enumerate}
\end{proposition}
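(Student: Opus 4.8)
The plan is to prove all five items by a single, uniform strategy: unfold the definition $I_{\mathcal{GO}}^{N}(x,y)=N(\mathcal{GO}(x,N(y)))$ on both sides of the claimed contraposition identity, and then exploit three facts as needed, namely the commutativity ($\mathcal{GO}1$) of $\mathcal{GO}$, the boundary behaviour $N(0)=1$, $N(1)=0$, and whatever structure we are given on $N$ (invertibility when strict, or involutivity \ref{N5} when strong) together with the neutral element $1$ of $\mathcal{GO}$. Since $I_{\mathcal{GO}}^{N}\in\mathcal{FI}$ by Proposition \ref{nossaIFOverlap}, no work is needed to check that we are dealing with a genuine fuzzy implication; the entire content is algebraic manipulation of the defining expression.

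The direct implications (i), (ii) and (iv) are immediate. For (i), I would write $I_{\mathcal{GO}}^{N}(N(x),y)=N(\mathcal{GO}(N(x),N(y)))$ and $I_{\mathcal{GO}}^{N}(N(y),x)=N(\mathcal{GO}(N(y),N(x)))$, so L-CP(N) is just ($\mathcal{GO}1$). For (ii), strictness makes $N$ a bijection with $N(N^{-1}(t))=t$, whence $I_{\mathcal{GO}}^{N}(x,N^{-1}(y))=N(\mathcal{GO}(x,y))$ and $I_{\mathcal{GO}}^{N}(y,N^{-1}(x))=N(\mathcal{GO}(y,x))$, again equal by ($\mathcal{GO}1$); this yields R-CP($N^{-1}$). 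For (iv), if $N$ is strong then \ref{N5} gives $I_{\mathcal{GO}}^{N}(N(y),N(x))=N(\mathcal{GO}(N(y),x))$, which equals $N(\mathcal{GO}(x,N(y)))=I_{\mathcal{GO}}^{N}(x,y)$ by ($\mathcal{GO}1$), so CP(N) holds.

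The two converse statements (iii) and (v) are where the real idea lies, and this is the step I expect to be the main obstacle: turning a contraposition identity that holds for \emph{all} arguments into the pointwise equation $N(N(x))=x$. The trick is to specialise to a boundary value so that the neutral element collapses one side. For (iii), evaluating R-CP(N), i.e. $I_{\mathcal{GO}}^{N}(x,N(y))=I_{\mathcal{GO}}^{N}(y,N(x))$, at $y=1$ gives $I_{\mathcal{GO}}^{N}(x,0)=I_{\mathcal{GO}}^{N}(1,N(x))$. Using that $1$ is the neutral element of $\mathcal{GO}$, the left side is $N(\mathcal{GO}(x,1))=N(x)$ and the right side is $N(\mathcal{GO}(1,N(N(x))))=N(N(N(x)))$, so $N(x)=N(N(N(x)))$. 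For (v), the analogous substitution is $y=0$ in CP(N), i.e. $I_{\mathcal{GO}}^{N}(x,0)=I_{\mathcal{GO}}^{N}(N(0),N(x))=I_{\mathcal{GO}}^{N}(1,N(x))$, which produces exactly the same equation $N(x)=N(N(N(x)))$.

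To finish both converses I would invoke strictness: by \ref{N4} the negation $N$ is strictly decreasing, hence injective, so cancelling the outermost $N$ in $N(x)=N(N(N(x)))$ yields $x=N(N(x))$ for every $x\in[0,1]$, that is \ref{N5}, and $N$ is strong. The only subtlety to watch is that the reduction genuinely needs both the neutral element (to simplify $\mathcal{GO}(x,1)$ and $\mathcal{GO}(1,\cdot)$) and strictness (to justify the final cancellation); neither hypothesis can be dropped, which is why the statements of (iii) and (v) carry both assumptions.
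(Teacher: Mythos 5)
Your proposal is correct and follows essentially the same route as the paper: items (i), (ii) and (iv) by unfolding the definition and using ($\mathcal{GO}1$) together with strictness or \ref{N5}, and items (iii) and (v) by specializing the contraposition identity at a boundary argument, using the neutral element $1$ to reduce to $N(x)=N(N(N(x)))$ and cancelling by injectivity of the strict negation (the paper merely performs that cancellation one step earlier, obtaining $\mathcal{GO}(1,N(N(y)))=\mathcal{GO}(y,1)$ before applying the neutral element).
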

\begin{proof}
	\begin{enumerate}
		\item [(i)] For all $x,y \in [0,1]$, it holds that:
		\begin{eqnarray*}
			I_{\mathcal{GO}}^{N}(N(x), y) &=& N(\mathcal{GO}(N(x), N(y))) \\
			&\stackrel{(\mathcal{GO}1)}=& N(\mathcal{GO}(N(y), N(x)))\\
			&=& I_{\mathcal{GO}}^{N}(N(y), x).
		\end{eqnarray*}
		\item [(ii)] For all $x,y \in [0,1]$, one has that:
		\begin{eqnarray*}
			I_{\mathcal{GO}}^{N}(x, N^{-1}(y)) &=& N(\mathcal{GO}(x, N(N^{-1}(y))))\\
			& =& N(\mathcal{GO}(x, y)) \stackrel{(\mathcal{GO}1)}{=} N(\mathcal{GO}(y, x))\\
			& =& N(\mathcal{GO}(y, N(N^{-1}(x))))\\
			&=& I_{\mathcal{GO}}^{N}(y, N^{-1}(x)).
		\end{eqnarray*}
		\item [(iii)] Since $I_{\mathcal{GO}}^{N}$ satisfies $R-CP(N)$, then $I_{\mathcal{GO}}^{N}(1,N(y)) = I_{\mathcal{GO}}^{N}(y, N(1))$. Hence, since $N$ is a strict negation, 
		\begin{eqnarray*}
			\mathcal{GO}(1, N(N(y))) = \mathcal{GO}(y, N(N(1)))	
		\end{eqnarray*}
		for all $y \in [0,1]$, i.e., $\mathcal{GO}(1, N(N(y))) = \mathcal{GO}(y, 1)$ for all $y \in [0,1]$. So, since $1$ is the neutral element of $\mathcal{GO}$, $N(N(y)) = y$, for all $y \in [0,1]$.
		\item [(iv)] For all $x,y \in [0,1]$, since $N$ is strong, it follows that:
		\begin{eqnarray*}
			I_{\mathcal{GO}}^{N}(N(y), N(x)) \hspace{-1.0ex}&=& \hspace{-1.0ex} N(\mathcal{GO}(N(y), N(N(x))))\\
			&=& \hspace{-1.0ex}N(\mathcal{GO}(N(y), x)) \\
			&\stackrel{(\mathcal{GO}1)}{=}&\hspace{-1.0ex} N(\mathcal{GO}(x, N(y))) = I_{\mathcal{GO}}^{N}(x, y).
		\end{eqnarray*}
		 
		\item [(v)] Since $I_{\mathcal{GO}}^{N}$ satisfies CP(N) and $N$ is a strict negation, then $\mathcal{GO}(x, N(0)) = \mathcal{GO}(N(0), N(N(x)))$ for all $x \in [0,1]$, i.e., $\mathcal{GO}(x, 1) = \mathcal{GO}(1, N(N(x)))$ for all $x \in [0,1]$. So, since $1$ is the neutral element of $\mathcal{GO}$, $N(N(x))$ $= x$, for all $x \in [0,1]$.	
	\end{enumerate}
\end{proof}

\begin{proposition}
	Let $I_{\mathcal{GO}}^{N}$ be a $(\mathcal{GO},N)$-implication. If $N$ is a strong negation. Then:
	\begin{enumerate}
		\item [(i)] $I_{\mathcal{GO}}^{N}$ satisfies (NP) if and only if $1$ is the neutral element of $\mathcal{GO}$.
		\item [(ii)] $I_{\mathcal{GO}}^{N}$ satisfies (EP) if and only if $\mathcal{GO}$ is associative.
	\end{enumerate}
\end{proposition}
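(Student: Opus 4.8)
The plan is to argue directly from the defining formula $I_{\mathcal{GO}}^{N}(x,y) = N(\mathcal{GO}(x,N(y)))$, exploiting throughout that a strong negation is an involution ($N(N(t))=t$ for all $t$) and is therefore a bijection of $[0,1]$, together with the commutativity (\mbox{$\mathcal{GO}$1}) of the general overlap function. Both facts do the heavy lifting: the involution lets me collapse nested negations, and the bijectivity lets an expression such as $N(y)$ range over all of $[0,1]$.

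For (i) I would simply evaluate $I_{\mathcal{GO}}^{N}(1,y) = N(\mathcal{GO}(1,N(y)))$. If $1$ is the neutral element, then by commutativity $\mathcal{GO}(1,N(y)) = N(y)$, so $I_{\mathcal{GO}}^{N}(1,y) = N(N(y)) = y$, which is (NP). Conversely, assuming (NP), I apply $N$ to the identity $N(\mathcal{GO}(1,N(y))) = y$ and use strongness to get $\mathcal{GO}(1,N(y)) = N(y)$ for every $y$; since $N$ is surjective, $N(y)$ runs over all of $[0,1]$, hence $\mathcal{GO}(1,w) = w$ for every $w$, and commutativity then gives $\mathcal{GO}(w,1)=w$, i.e. $1$ is the neutral element.

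For (ii) the core computation is to unfold the iterated implications. Substituting and collapsing the double negation via strongness yields $I_{\mathcal{GO}}^{N}(x, I_{\mathcal{GO}}^{N}(y,z)) = N(\mathcal{GO}(x, \mathcal{GO}(y, N(z))))$ and, by symmetry in $x$ and $y$, $I_{\mathcal{GO}}^{N}(y, I_{\mathcal{GO}}^{N}(x,z)) = N(\mathcal{GO}(y, \mathcal{GO}(x, N(z))))$. Stripping the outer $N$ (injectivity) and writing $w = N(z)$ (surjectivity), (EP) becomes equivalent to the left-exchange identity
\[\mathcal{GO}(x, \mathcal{GO}(y, w)) = \mathcal{GO}(y, \mathcal{GO}(x, w)) \quad \text{for all } x,y,w \in [0,1].\]

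The remaining, and genuinely delicate, step is to show that this left-exchange identity is, in the presence of commutativity, equivalent to associativity of $\mathcal{GO}$; this is where I expect the main obstacle to lie, since (EP) does not translate into associativity directly but only into the weaker-looking exchange form. Writing $a*b$ for $\mathcal{GO}(a,b)$, one direction is immediate: associativity plus commutativity give $x*(y*w) = (x*y)*w = (y*x)*w = y*(x*w)$. For the converse I would compute $(x*y)*w = w*(x*y) = x*(w*y) = x*(y*w)$, where the outer equalities use commutativity and the middle one applies the left-exchange identity with the arguments suitably permuted. Thus commutativity upgrades the exchange identity to full associativity, and combining this equivalence with the reformulation of (EP) above closes the proof of (ii).
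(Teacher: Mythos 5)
Your proof is correct and follows essentially the same route as the paper's: evaluate the defining formula $I_{\mathcal{GO}}^{N}(x,y)=N(\mathcal{GO}(x,N(y)))$, collapse double negations via strongness, and translate (NP) and (EP) into identities for $\mathcal{GO}$. You are in fact slightly more careful than the paper in (ii): the paper derives the exchange identity $\mathcal{GO}(x,\mathcal{GO}(y,z))=\mathcal{GO}(y,\mathcal{GO}(x,z))$ and then simply asserts ``therefore $\mathcal{GO}$ is associative,'' whereas you supply the short commutativity computation $(x*y)*w = w*(x*y) = x*(w*y) = x*(y*w)$ that actually upgrades the exchange identity to associativity.
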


\begin{proof}
	Indeed,
	\begin{enumerate}
		\item [(i)]
		Consider $I_{\mathcal{GO}}^{N}(1,y) = y$, for all $y \in [0,1]$. Then, since $N$ is strong,
		\begin{center}
			$\mathcal{GO}(1, N(y)) =  N(y)$ \hfil(*)
		\end{center}
		for all $y \in [0,1]$. So, one has that
		\[\mathcal{GO}(1, x) \stackrel{(N5)}{=} \mathcal{GO}(1, N(N(x))) \stackrel{(*)}{=} N(N(x)) \stackrel{(N5)}{=} x,\] for all $x \in [0,1]$. Conversely, since $1$ is neutral element of $\mathcal{GO}$, then for all $y \in [0,1]$, 
		\begin{eqnarray*}
			I_{\mathcal{GO}}^{N}(1,y) = N(\mathcal{GO}(1, N(y))) = N(N(y)) \stackrel{(N5)}{=} y.
		\end{eqnarray*}
		
		\item [(ii)] Consider that $I_{\mathcal{GO}}^{N}$ satisfies (EP). Then, for all $x,y,z \in [0,1]$, since $N$ is a strong negation,
		\begin{eqnarray*}
			N\hspace{-1.0ex}&(&\hspace{-1.0ex} \mathcal{GO}( x,\mathcal{GO}(y,z))) = \\
			&=&  N(\mathcal{GO}(x,N(N(\mathcal{GO}(y,N(N(z)))))))\\
			&=&  I_{\mathcal{GO}}^{N}(x,I_{\mathcal{GO}}^{N}(y,N(z))) = I_{\mathcal{GO}}^{N}(y,I_{\mathcal{GO}}^{N}(x,N(z)))\\
			&=& N(\mathcal{GO}(y,\mathcal{GO}(x,z)))
		\end{eqnarray*} 
		and so, $\mathcal{GO}(x,\mathcal{GO}(y,z)) =\mathcal{GO}(y,\mathcal{GO}(x,z)$,
		for all $x,y,z \in [0,1]$.
		Therefore, $\mathcal{GO}$ is associative.
		
		Conversely, for all $x,y,z \in [0,1]$, since $N$ is strong and $\mathcal{GO}$ is associative then
		\begin{eqnarray*}
			I_{\mathcal{GO}}^{N}(x,I_{\mathcal{GO}}^{N}(y,z)) &=&
			N(\mathcal{GO}(x,\mathcal{GO}(y,N(z)))) \\
			&\stackrel{\mathcal{GO} \ \text{Assoc.}}{=} & N(\mathcal{GO}(\mathcal{GO}(x,y),N(z))) \\
			&\stackrel{(\mathcal{GO}1)}{=}& N(\mathcal{GO}(\mathcal{GO}(y,x),N(z))) \\
			& \stackrel{\mathcal{GO} \ \text{Assoc.}}{=} & N(\mathcal{GO}(y,\mathcal{GO}(x,N(z)))) \\
			& = & I_{\mathcal{GO}}^{N}(y,I_{\mathcal{GO}}^{N}(x,z)).
		\end{eqnarray*}
		Therefore, $I_{\mathcal{GO}}^{N}$ satisfies (EP).
	\end{enumerate}
\end{proof}

\begin{proposition} Let $\mathcal{GO}$ be a bivariate general overlap function satisfying  ($\mathcal{GO}$2a), and  $I_{\mathcal{GO}}^{N}$ be a $(\mathcal{GO},N)$-implication. If $N$ is a frontier fuzzy negation, then $I_{\mathcal{GO}}^{N}$ satisfies (EP1).
\end{proposition}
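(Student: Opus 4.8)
The plan is to reduce everything to a clean characterisation of when such an implication takes the value $1$, after which (EP1) becomes immediate and in fact symmetric. The starting observation is that, because $N$ is a frontier negation, the only preimage of $1$ under $N$ is $0$ and the only preimage of $0$ under $N$ is $1$: indeed $N(w)=1$ forces $N(w)\in\{0,1\}$, hence $w\in\{0,1\}$ by the frontier property \ref{N7}, and since $N(1)=0$ this gives $w=0$; symmetrically $N(w)=0$ forces $w=1$.

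First I would prove that, for all $a,b\in[0,1]$,
\[ I_{\mathcal{GO}}^{N}(a,b)=1 \iff a=0 \ \text{ or } \ b=1. \]
For the forward direction, from $N(\mathcal{GO}(a,N(b)))=1$ the inversion of $N$ above yields $\mathcal{GO}(a,N(b))=0$; then ($\mathcal{GO}$2a) gives $a\cdot N(b)=0$, so either $a=0$ or $N(b)=0$, the latter meaning $b=1$. For the converse, if $a=0$ or $b=1$ then $a\cdot N(b)=0$, whence $\mathcal{GO}(a,N(b))=0$ by the defining property ($\mathcal{GO}$2) and therefore $I_{\mathcal{GO}}^{N}(a,b)=N(0)=1$.

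Then I would simply unfold both nested expressions with this characterisation. Applying it with second argument $I_{\mathcal{GO}}^{N}(y,z)$ gives
\[ I_{\mathcal{GO}}^{N}\bigl(x,I_{\mathcal{GO}}^{N}(y,z)\bigr)=1 \iff x=0 \ \text{ or } \ I_{\mathcal{GO}}^{N}(y,z)=1, \]
and a second application rewrites $I_{\mathcal{GO}}^{N}(y,z)=1$ as $y=0$ or $z=1$; hence the premise is equivalent to the fully symmetric condition $x=0$ or $y=0$ or $z=1$. The identical two steps applied to $I_{\mathcal{GO}}^{N}\bigl(y,I_{\mathcal{GO}}^{N}(x,z)\bigr)=1$ produce exactly the same condition, so the two statements are actually equivalent, which in particular yields the one-directional implication demanded by (EP1).

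The only real content lies in the characterisation, and within it the sole point needing care is the correct use of the frontier property \ref{N7} to invert $N$ on the values $0$ and $1$; everything else is direct substitution. I do not anticipate a genuine obstacle, but I would double-check that ($\mathcal{GO}$2a) is invoked in the direction ``value $0$ implies product $0$'' while the converse rests on ($\mathcal{GO}$2), so that the equivalence in the characterisation is genuinely two-sided.
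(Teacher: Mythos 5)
Your proof is correct and follows essentially the same route as the paper: both arguments use the frontier property to invert $N$ at the values $0$ and $1$ and then invoke ($\mathcal{GO}$2a) to reduce the hypothesis to the disjunction $x=0$ or $y=0$ or $z=1$, from which the conclusion follows. The only difference is presentational — you extract an explicit two-sided characterisation of when $I_{\mathcal{GO}}^{N}$ equals $1$, which makes the symmetry (and hence (EP1), indeed as an equivalence) immediate, whereas the paper runs the same reasoning as an inline case analysis and verifies the conclusion in each case via ($\mathcal{GO}$2), \ref{I3} and \ref{I4}.
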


\begin{proof}
	Suppose that $I_{\mathcal{GO}}^{N}(x, I_{\mathcal{GO}}^{N}(y,z)) = 1$, for $x, y, z \in [0,1]$. This means that $N(\mathcal{GO}(x,N(N(\mathcal{GO}(y,N(z)))))) = 1$. In this case, since $N$ is a frontier negation, then:
	
	\noindent $\mathcal{GO}(x,N(N(\mathcal{GO}(y,N(z))))) =0$. 
	
	\noindent By  ($\mathcal{GO}$2a), this means that  $x=0$ or $N(N(\mathcal{GO}(y,N(z))))$ $= 0$. Then, one has the following cases:
	
	\noindent (1) For $x=0$, it follows that:
	\begin{eqnarray*}
		I_{\mathcal{GO}}^{N}(y, I_{\mathcal{GO}}^{N}(0,z)) = 
		I_{\mathcal{GO}}^{N}(y,1) = N(\mathcal{GO}(y,0)) \stackrel{(\mathcal{GO}2)}{=} 1.
	\end{eqnarray*}
		
	\noindent	(2) For $N(N(\mathcal{GO}(y,N(z)))) = 0$, since $N$ is a frontier negation, so 
	$\mathcal{GO}(y,N(z))$ $ = 0$.		
	So, by ($\mathcal{GO}$2a), $y=0$ or $z=1$. If $y=0$, then $I_{\mathcal{GO}}^{N}(0, I_{\mathcal{GO}}^{N}(x,z)) =1$. 
	On the other hand, if $z=1$, then
	$I_{\mathcal{GO}}^{N}(y, I_{\mathcal{GO}}^{N}(x,1)) = I_{\mathcal{GO}}^{N}(y,1)=1$.
	
	Thus, in any case, it holds that $I_{\mathcal{GO}}^{N}(y, I_{\mathcal{GO}}^{N}(x,z)) = 1$.		
\end{proof}

\begin{proposition}\label{prop-idp}
	Let $I_{\mathcal{GO}}^{N}$ be a $(\mathcal{GO},N)$-implication with a strict fuzzy negation $N$. If $I_{\mathcal{GO}}^{N}$ satisfies $(IB)$ and $\mathcal{GO}$ has 1 as neutral element, then $N$ is strong and  $\mathcal{GO}$ is idempotent.
\end{proposition}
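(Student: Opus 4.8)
The plan is to expand the condition (IB) directly using the defining formula $I_{\mathcal{GO}}^{N}(x,y) = N(\mathcal{GO}(x, N(y)))$, strip off the outer negation using strictness of $N$, and then exploit the neutral element by specializing the two free variables one at a time.

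First I would write $I_{\mathcal{GO}}^{N}(x, I_{\mathcal{GO}}^{N}(x,y)) = I_{\mathcal{GO}}^{N}(x,y)$ explicitly: the left-hand side becomes $N(\mathcal{GO}(x, N(N(\mathcal{GO}(x, N(y))))))$ while the right-hand side is $N(\mathcal{GO}(x, N(y)))$. Since $N$ is strict it is a continuous, strictly decreasing bijection, hence injective, so the outermost $N$ cancels on both sides. Writing $u = N(y)$ --- which ranges over all of $[0,1]$ as $y$ does, again because $N$ is a bijection --- this collapses to the single identity
\[
\mathcal{GO}(x, N(N(\mathcal{GO}(x, u)))) = \mathcal{GO}(x, u), \qquad x, u \in [0,1].
\]
Everything then follows from specializing the two arguments of this identity.

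Next, to obtain that $N$ is strong, I would set $x = 1$. Because $1$ is the neutral element of $\mathcal{GO}$, the map $\mathcal{GO}(1, \cdot)$ is the identity, so the left-hand side reduces to $N(N(u))$ and the right-hand side to $u$; this yields $N(N(u)) = u$ for all $u \in [0,1]$, i.e. \ref{N5} holds and $N$ is strong. Feeding this back into the displayed identity removes the inner double negation, leaving $\mathcal{GO}(x, \mathcal{GO}(x, u)) = \mathcal{GO}(x, u)$ for all $x, u \in [0,1]$. Finally, setting $u = 1$ and once more using that $1$ is neutral gives $\mathcal{GO}(x, x) = x$ for all $x$, which is precisely idempotency.

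I expect no genuine obstacle here beyond organizing the substitutions in the correct order: both conclusions drop out of the one identity above by first specializing the first argument to $1$ (which delivers strongness) and then the second argument to $1$ (which delivers idempotency). Strictness of $N$ is used only twice --- to cancel the outer negation and to guarantee that $u$ sweeps the whole interval --- and strongness must be established \emph{before} simplifying the identity to its idempotency-yielding form, so the sequencing of the two specializations is the one point to state carefully.
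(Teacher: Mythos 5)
Your proof is correct and follows essentially the same route as the paper's: both arguments first specialize $x=1$ in (IB), use the neutral element and the injectivity/bijectivity of the strict negation to conclude $N(N(u))=u$, and then feed strongness back into (IB) and set the second argument to $1$ to obtain $\mathcal{GO}(x,x)=x$. The only difference is organizational (you cancel the outer $N$ and substitute $u=N(y)$ up front, where the paper carries the negations along), so nothing further is needed.
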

\begin{proof}
	Indeed, since $I_{\mathcal{GO}}^{N}$ satisfies $(IB)$, we have for $x=1$, $I_{\mathcal{GO}}^{N}(1,I_{\mathcal{GO}}^{N}(1,y)) = I_{\mathcal{GO}}^{N}(1,y)$, $\forall y \in [0,1]$. So, 
	\begin{eqnarray*}
		N(\mathcal{GO}(1,N(N(\mathcal{GO}(1,N(y)))))) = N(\mathcal{GO}(1,N(y)))
	\end{eqnarray*}
	and therefore,
	$N(N(N(N(y)))) = N(N(y))$, for all $y \in [0,1]$, since 1 is neutral element of $\mathcal{GO}$. However, $N$ being a strict negation, then
	$N(N(y)) = y$, for all $y \in [0,1]$ and, then, $N$ is strong. Moreover, since
	\begin{eqnarray*} 
		\lefteqn{I_{\mathcal{GO}}^{N}(x,I_{\mathcal{GO}}^{N}(x,N(y))) = I_{\mathcal{GO}}^{N}(x,N(y))},
	\end{eqnarray*}
	we have that $N(\mathcal{GO} (x, \mathcal{GO}(x, (y))))= N (\mathcal{GO}(x, y))$, since $N$ is strong. So, $\mathcal{GO}(x,\mathcal{GO}(x,y)) = \mathcal{GO}(x, y)$.
	In particular, for $y=1$, 
	$\mathcal{GO}(x,x) = x$, for all $x \in [0,1]$, since $1$ is the neutral element of $\mathcal{GO}$. Therefore, the general overlap function $\mathcal{GO}$ is idempotent.		
\end{proof}

\begin{corollary}
	Let $I_{\mathcal{GO}}^{N}$ be a $(\mathcal{GO},N)$-implication with a strict fuzzy negation $N$. If $I_{\mathcal{GO}}^{N}$ satisfies (IB) and $1$ is the neutral element of the bivariate general overlap function $\mathcal{GO}$, then $\mathcal{GO}$ is the minimum t-norm.
\end{corollary}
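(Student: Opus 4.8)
The plan is to chain together two results already established in the excerpt, since the hypotheses here are exactly those of Proposition~\ref{prop-idp}. First I would invoke Proposition~\ref{prop-idp}: because $I_{\mathcal{GO}}^{N}$ is a $(\mathcal{GO},N)$-implication with a strict fuzzy negation $N$, satisfies (IB), and $\mathcal{GO}$ has $1$ as neutral element, that proposition yields immediately that $N$ is strong and, more importantly for us, that $\mathcal{GO}$ is idempotent. So the single nontrivial input (idempotency of $\mathcal{GO}$) is obtained at no extra cost.

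Next I would feed this into Proposition~\ref{Ominimum}. We now have a bivariate general overlap function $\mathcal{GO}$ for which $1$ is the neutral element (by hypothesis) and which is idempotent (from the previous step); Proposition~\ref{Ominimum} then asserts precisely that such a $\mathcal{GO}$ must be the minimum, i.e. $\mathcal{GO}(x,y)=\min\{x,y\}$ for all $x,y\in[0,1]$.

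Finally I would observe that the minimum is in fact a t-norm: $\min$ is commutative and associative, and satisfies $\min\{x,1\}=x$, so it verifies (T1)--(T3) of Definition~\ref{def_t-norm}. Hence $\mathcal{GO}$ is not merely ``the minimum'' among general overlap functions but the minimum t-norm, which is the desired conclusion.

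I do not expect any genuine obstacle here, as the corollary is a straightforward composition of Propositions~\ref{prop-idp} and~\ref{Ominimum}; the only point worth stating explicitly, rather than leaving implicit, is the last remark that $\min$ satisfies the t-norm axioms, so that the phrase ``minimum t-norm'' in the statement is justified and not just ``the minimum general overlap function.''
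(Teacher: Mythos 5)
Your proposal is correct and follows exactly the paper's own argument: the paper proves this corollary by citing Propositions~\ref{prop-idp} and~\ref{Ominimum} in precisely the order you describe. Your closing remark that $\min$ satisfies the t-norm axioms is a harmless extra detail the paper leaves implicit.
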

\begin{proof} Straightforward from Propositions \ref{prop-idp} and \ref{Ominimum}.	
\end{proof}


\begin{remark}\label{rem-Crisp-GON-imp}
	Observe that, trivially, $I_{\mathcal{GO}}^{N}$ is crisp if and only if $N$ is crisp. In fact, for each $\alpha\in (0,1)$, if 1 is a neutral element of $\mathcal{GO}$ then 	 $I_{\mathcal{GO}}^{N^\alpha}=I_{\underline{\alpha}}^{\alpha}$ and $I_{\mathcal{GO}}^{N_\alpha}=I_{\alpha}^{\overline{\alpha}}$.
\end{remark}

\begin{proposition}
	Let $I_{\mathcal{GO}}^{N}$ be a crisp $(\mathcal{GO},N)$-implication, and let 1 be a neutral element of $\mathcal{GO}$, then:
	\begin{enumerate}
		\item [(i)] $I_{\mathcal{GO}}^{N}$ satisfies (EP) but it does not satisfy (NP).
		\item [(ii)] $I_{\mathcal{GO}}^{N}$ satisfies (LOP) but it does not satisfy (ROP);
		\item [(iii)] $I_{\mathcal{GO}}^{N}$ satisfies (IP);
		\item [(iv)] $I_{\mathcal{GO}}^{N}$ satisfies (IB);
		\item [(v)] $I_{\mathcal{GO}}^{N}$ satisfies (CP) with respect to $N$;
		\item [(vi)] $I_{\mathcal{GO}}^{N}$ satisfies (R-CP) with respect to $N$.
	\end{enumerate}
\end{proposition}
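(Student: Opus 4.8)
The plan is to reduce everything to a single clean description of $I_{\mathcal{GO}}^{N}$ that exploits both hypotheses at once. Since $I_{\mathcal{GO}}^{N}$ is crisp, Remark~\ref{rem-Crisp-GON-imp} gives that $N$ is a crisp negation, so $N(x),N(y)\in\{0,1\}$ for all $x,y$. I would then split on the value of $N(y)$ in $I_{\mathcal{GO}}^{N}(x,y)=N(\mathcal{GO}(x,N(y)))$: if $N(y)=0$ then $\mathcal{GO}(x,0)=0$ by $(\mathcal{GO}2)$, giving $I_{\mathcal{GO}}^{N}(x,y)=N(0)=1$; if $N(y)=1$ then $\mathcal{GO}(x,1)=x$ because $1$ is the neutral element, giving $I_{\mathcal{GO}}^{N}(x,y)=N(x)$. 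Combining these, the whole behaviour is captured by the single equivalence $I_{\mathcal{GO}}^{N}(x,y)=0 \iff N(x)=0 \text{ and } N(y)=1$, and $I_{\mathcal{GO}}^{N}(x,y)=1$ otherwise. Every item then follows from this characterization, so the bulk of the work is setting it up carefully.

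The order/algebraic items are quick from the equivalence. For (IP), $I_{\mathcal{GO}}^{N}(x,x)=0$ would force $N(x)=0$ and $N(x)=1$ simultaneously, which is impossible, so $I_{\mathcal{GO}}^{N}(x,x)=1$. For (LOP), if $x\leq y$ then antitonicity \ref{N1} gives $N(x)\geq N(y)$, so $N(y)=1$ already forces $N(x)=1\neq 0$; hence the zero-case cannot occur and $I_{\mathcal{GO}}^{N}(x,y)=1$. For (EP), I would compute that $I_{\mathcal{GO}}^{N}(x,I_{\mathcal{GO}}^{N}(y,z))=0$ holds exactly when $N(x)=0$ and $N(I_{\mathcal{GO}}^{N}(y,z))=1$; since $I_{\mathcal{GO}}^{N}(y,z)\in\{0,1\}$ and $N(0)=1,N(1)=0$, this reduces to $N(x)=0$, $N(y)=0$, $N(z)=1$, a condition visibly symmetric in $x$ and $y$, which yields (EP). For (IB), I would split on $I_{\mathcal{GO}}^{N}(x,y)\in\{0,1\}$: when it is $1$ use \ref{I4}, and when it is $0$ note $I_{\mathcal{GO}}^{N}(x,0)=N(x)=0$, so $I_{\mathcal{GO}}^{N}(x,I_{\mathcal{GO}}^{N}(x,y))$ matches $I_{\mathcal{GO}}^{N}(x,y)$ in both cases.

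The delicate items, and the main obstacle, are (CP) and (R-CP), because the general Proposition~\ref{Prop.(O,N)} supplies (CP) only for strong $N$ and (R-CP) only for strict $N$, whereas a crisp $N$ is neither. The resolution is the observation that a crisp negation is an involution on its own image: since $N(0)=1$ and $N(1)=0$ by \ref{N2}, and $N(a)\in\{0,1\}$ always, one has $N(N(a))=1\iff N(a)=0$ and $N(N(a))=0\iff N(a)=1$. Feeding this into the characterization, $I_{\mathcal{GO}}^{N}(N(y),N(x))=0\iff N(N(y))=0\text{ and }N(N(x))=1\iff N(y)=1\text{ and }N(x)=0$, which is exactly the condition for $I_{\mathcal{GO}}^{N}(x,y)=0$, giving (CP); an analogous substitution collapses both sides of (R-CP) to the common condition $N(x)=0$ and $N(y)=0$.

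Finally, the two negative claims are handled by explicit witnesses. Property (NP) fails because $I_{\mathcal{GO}}^{N}$ is crisp while (NP) would demand $I_{\mathcal{GO}}^{N}(1,y)=y$; picking any $y\in(0,1)$ gives $I_{\mathcal{GO}}^{N}(1,y)\in\{0,1\}\neq y$. Property (ROP) fails because it would require $I_{\mathcal{GO}}^{N}(x,y)=0$ (equivalently $N(x)=0$ and $N(y)=1$) for every $x>y$; since $N$ is crisp with threshold in $(0,1)$, one of the level sets $\{N=0\}$ or $\{N=1\}$ contains a nondegenerate subinterval, so I can choose $x>y$ lying in the same level set, forcing $I_{\mathcal{GO}}^{N}(x,y)=1$ while $x>y$. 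I expect the crisp-involution argument for (v)--(vi) to be the only genuinely non-obvious step; the rest is a routine unwinding of the two-case formula.
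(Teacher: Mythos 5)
Your proof is correct, and the engine is the same as the paper's: because $N$ is crisp, $\mathcal{GO}(x,N(y))$ is either $\mathcal{GO}(x,0)=0$ or $\mathcal{GO}(x,1)=x$, so everything reduces to a two-case formula. What you do differently is package this once and for all as the zero-set characterization $I_{\mathcal{GO}}^{N}(x,y)=0\iff N(x)=0 \text{ and } N(y)=1$, and then read every item off that equivalence (using that two crisp functions coincide iff their zero-sets coincide). This buys you a fully self-contained argument: the paper proves (i) by citing an external result on crisp implications and proves (ii) by first invoking the classification of crisp negations as $N_\alpha$ or $N^\alpha$ and the explicit forms $I_{\underline{\alpha}}^{\alpha}$, $I_{\alpha}^{\overline{\alpha}}$, then doing casework on $\alpha$; your route avoids both detours, and your (EP) argument via symmetry of the condition ``$N(x)=0$, $N(y)=0$, $N(z)=1$'' is cleaner than an appeal to the cited proposition. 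Your key observation for (v)--(vi), that a crisp $N$ satisfies $N(N(N(a)))=N(a)$, is exactly the identity the paper also uses in its case analysis for (CP). One small imprecision: in the (ROP) argument you say the crisp negation has ``threshold in $(0,1)$,'' but the threshold may be $0$ (for $N_\bot=N_0$) or $1$ (for $N_\top=N^1$); this is harmless, since in every case at least one of the level sets $\{N=0\}$, $\{N=1\}$ contains a nondegenerate interval, which is all your witness construction needs.
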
	

\begin{proof}
	Indeed,
	\begin{enumerate}
		\item [(i)] Directly from \cite[Prop. 6]{PBSS18b}, considering Remark \ref{rem-Crisp-GON-imp}.
		\item [(ii)] Since $N$ is crisp and 1 is a neutral element of $\mathcal{GO}$, it follows that:
		\begin{itemize}
			\item[(LOP)] For all $x,y \in [0,1]$ such that $x \leq y$, we have two situations: \begin{description} 
				
				\item [(1)]  If there exists $\alpha \in \left( 0,1 \right) $ such that $N = N_\alpha$, so, by Remark \ref{rem-Crisp-GON-imp} and \textbf{(C4)}, $I_{\mathcal{GO}}^{N}(x,y)=I^{\overline{\alpha}}_{\alpha}(x,y)$, then: 
				\begin{eqnarray} \label{Nalpha1}
				I_{\mathcal{GO}}^{N}(x,y)=\begin{cases}
				0, & \text{ if } x > \alpha \text{ and } y\leq \alpha. \\
				1, & \text{ if }  y > \alpha \text{ or } x \leq \alpha.
				\end{cases} 	
				\end{eqnarray}
				For $y\leq \alpha$, as $x \leq y$, it holds that $x \leq \alpha$. Hence one concludes that $I_{\mathcal{GO}}^{N}(x,y)=1$. For $y>\alpha$, it is immediate that $I_{\mathcal{GO}}^{N}(x,y)=1$.
				
				\item [(2)] If there exists $\alpha \in \left( 0,1 \right) $ such that $N = N^\alpha$, so, by Remark \ref{rem-Crisp-GON-imp} and  \textbf{(C3)}, $I_{\mathcal{GO}}^{N}(x,y)=I_{\underline{\alpha}}^{\alpha}(x,y)$, then:
				\begin{eqnarray} \label{Nalpha2}
				I_{\mathcal{GO}}^{N}(x,y)=\begin{cases}
				0, & \text{ if } x \geq \alpha \text{ and } y < \alpha.\\
				1, & \text{ if }  x < \alpha \text{ or } y \geq \alpha.
				\end{cases}	
				\end{eqnarray}
				For $y < \alpha$, as $x \leq y$, it holds that $x < \alpha$. So one concludes that $I_{\mathcal{GO}}^{N}(x,y)=1$. For $y\geq \alpha$, it is immediate that $I_{\mathcal{GO}}^{N}(x,y)=1$.
			\end{description}
			Therefore, it holds that $I_{\mathcal{GO}}^{N}$ satisfies $(LOP)$.

			\item[(ROP)] We also consider two situations: 
			\begin{description}
				
				\item [(1)]  If $N = N_\alpha$, for some $\alpha \in \left( 0,1 \right) $, then take $x,y \in [0,1]$ such that $ x > y > \alpha$. So, by Equation (\ref{Nalpha1}), $I_{\mathcal{GO}}^{N}(x,y) = 1$.

				\item [(2)]  If $N = N^\alpha$, for some $\alpha \in \left( 0,1 \right) $, then take $x,y \in [0,1]$ such that $ y < x < \alpha$. So, by Equation ~(\ref{Nalpha2}), $I_{\mathcal{GO}}^{N}(x,y) = 1$.
				
			\end{description}  In both situations, there exists $x > y$, but $I_{\mathcal{GO}}^{N}(x,y) = 1$, therefore $I_{\mathcal{GO}}^{N}$ does not satisfy $(ROP)$.
			
		\end{itemize}

		\item [(iii)] Given $x \in [0,1]$, since $N$ is crisp, $N(x) = 0$ or $N(x) = 1$. If $N(x) = 0$, then $I_{\mathcal{GO}}^{N}(x, x) = N(\mathcal{GO}(x, N(x))) = N(\mathcal{GO}(x, 0)) \stackrel{(\mathcal{GO}2)}{=} 1$. On the other hand, if $N(x) = 1$, then $I_{\mathcal{GO}}^{N}(x, x) = N(\mathcal{GO}(x, N(x))) =  N(\mathcal{GO}(x, 1)) = N(x) = 1$, since $1$ is the neutral element of $\mathcal{GO}$.
		
		\item [(iv)] Given $y \in [0,1]$, as $N$ is crisp, $N(y) = 0$ or $N(y) = 1$.
		
		\textbf{(1)} $N(y) = 0$: for all $x \in [0,1]$,	 
		\begin{eqnarray*}
			I_{\mathcal{GO}}^{N}(x, I_{\mathcal{GO}}^{N}(x,y)) \hspace{-2.0ex}&=&\hspace{-2.0ex} N(\mathcal{GO}(x, N(N(\mathcal{GO}(x, N(y)))))) \\
			&=& \hspace{-1.0ex} N(\mathcal{GO}(x, N(N(\mathcal{GO}(x, 0))))) \\
			&\stackrel{(\mathcal{GO}2)}{=}& \hspace{-1.0ex} N(\mathcal{GO}(x, N(N(0))))\\
			&=& \hspace{-1.0ex}N(\mathcal{GO}(x, 0)) \stackrel{(\mathcal{GO}2)}{=} 1 
		\end{eqnarray*}
		and 
		\begin{eqnarray*}
			I_{\mathcal{GO}}^{N}(x,y) = N(\mathcal{GO}(x, N(y))) = N(\mathcal{GO}(x, 0)) \stackrel{(\mathcal{GO}2)}{=} 1.
		\end{eqnarray*}
		
		\textbf{(2)} $N(y) = 1$: for all $x \in [0,1]$, since $1$ is the neutral element of $\mathcal{GO}$,
		\begin{eqnarray*}
			I_{\mathcal{GO}}^{N}(x, I_{\mathcal{GO}}^{N}(x,y)) &=& N(\mathcal{GO}(x, N(N(\mathcal{GO}(x, N(y))))))\\
			&=& N(\mathcal{GO}(x, N(N(\mathcal{GO}(x, 1)))))\\
			&=& N(\mathcal{GO}(x, N(N(x))))
		\end{eqnarray*}
		and $I_{\mathcal{GO}}^{N}(x,y) = N(\mathcal{GO}(x, N(y))) = N(\mathcal{GO}(x, 1)) = N(x)$. So, if $N(x) = 0$, then 
		\begin{eqnarray*}
			I_{\mathcal{GO}}^{N}(x, I_{\mathcal{GO}}^{N}(x,y))= N(\mathcal{GO}(x, 1)) = N(x) = 0
		\end{eqnarray*}
		and $I_{\mathcal{GO}}^{N}(x,y) = N(x) = 0$. Now, if $N(x) = 1$, then, by $(\mathcal{GO}2)$, $I_{\mathcal{GO}}^{N}(x, I_{\mathcal{GO}}^{N}(x,y)) = N(\mathcal{GO}(x, 0)) = 1$ and $I_{\mathcal{GO}}^{N}(x,y) = N(x) = 1$. Therefore, in any case, 
		\begin{eqnarray*}
			I_{\mathcal{GO}}^{N}(x, I_{\mathcal{GO}}^{N}(x,y)) = I_{\mathcal{GO}}^{N}(x,y).
		\end{eqnarray*}
		
		\item [(v)] Given $y \in [0,1]$, as $N$ is crisp, $N(y) = 0$ or $N(y) = 1$.
		
		\textbf{(1)} $N(y) = 0$: $I_{\mathcal{GO}}^{N}(x, y) = 
		N(\mathcal{GO}(x, 0)) \stackrel{(\mathcal{GO}2)}{=} 1$ and $I_{\mathcal{GO}}^{N}(N(y),N(x)) =  N(\mathcal{GO}(0, N(N(x)))) \stackrel{(\mathcal{GO}2)}{=} 1$, for all $x \in [0,1]$.
		
		\textbf{(2)} $N(y) = 1$: since $1$ is the neutral element of $\mathcal{GO}$, $I_{\mathcal{GO}}^{N}(x,y) = N(\mathcal{GO}(x, N(y))) = N(\mathcal{GO}(x, 1)) = N(x)$ and, we also have that
		\begin{eqnarray*}
			I_{\mathcal{GO}}^{N}(N(y), N(x))&=& N(\mathcal{GO}(N(y), N(N(x))))\\
			&=& N(\mathcal{GO}(1, N(N(x))))\\
			&=& N(N(N(x))),
		\end{eqnarray*}
		for all $x \in [0,1]$. Since $N$ is crisp, $N(N(N(x))) = N(x)$ for all $x \in [0,1]$. Therefore,  $I_{\mathcal{GO}}^{N}(N(y),N(x))= I_{\mathcal{GO}}^{N}(x,y)$.
		%
		
		\item [(vi)] Given $y \in [0,1]$, as $N$ is crisp, $N(y) = 0$ or $N(y) = 1$.
		
		\textbf{(1)} $N(y) = 0$: since $1$ is the neutral element of $\mathcal{GO}$, for all $x \in [0,1]$, 
		\begin{eqnarray*}
			I_{\mathcal{GO}}^{N}(x, N(y)) &=& N(\mathcal{GO}(x, N(0)))\\
			&=& N(\mathcal{GO}(x, 1)) = N(x)	
		\end{eqnarray*}
		and  $I_{\mathcal{GO}}^{N}(y,N(x)) = N(\mathcal{GO}(y, N(N(x))))$. If $N(x)$ $=0$ then, $I_{\mathcal{GO}}^{N}(x, N(y)) = 0 = N(y)= N(\mathcal{GO}(y,1))= I_{\mathcal{GO}}^{N}(y,N(x))$ and if  $N(x)=1$ then, $I_{\mathcal{GO}}^{N}(x, N(y)) = 1 = N(0)= N(\mathcal{GO}(y,0))= I_{\mathcal{GO}}^{N}(y,N(x))$.

		\textbf{(2)} $N(y) = 1$: since $1$ is the neutral element of $\mathcal{GO}$, $I_{\mathcal{GO}}^{N}(x,N(y)) = N(\mathcal{GO}(x, N(1))) \hspace{-0.5ex}=\hspace{-0.5ex} N(\mathcal{GO}(x, 0)) \hspace{-1.0ex} \stackrel{(\mathcal{GO}2)}{=} 1$ and $I_{\mathcal{GO}}^{N}(y, N(x))$ $ = N(\mathcal{GO}(y, N(N(x))))$, for all $x \in [0,1]$. So, if $N(x) = 0$, then 
		\begin{eqnarray*}
			I_{\mathcal{GO}}^{N}(y, N(x)) &=& N(\mathcal{GO}(y, N(0))) = N(\mathcal{GO}(y,1))\\
			&=& N(y) = 1.
		\end{eqnarray*}
		However, if $N(x) = 1$, then, by $(\mathcal{GO}2)$
		\begin{eqnarray*}
			I_{\mathcal{GO}}^{N}(y, N(x))\hspace{-0.3ex} = \hspace{-0.3ex} N(\mathcal{GO}(y, N(1))) \hspace{-0.3ex} =\hspace{-0.3ex} N(\mathcal{GO}(y, 0)) \hspace{-0.3ex} =\hspace{-0.3ex} 1.
		\end{eqnarray*}
		Therefore, in any case, $I_{\mathcal{GO}}^{N}(x, N(y)) = I_{\mathcal{GO}}^{N}(y, N(x))$.		   
	\end{enumerate}
\end{proof}	

	\subsection{Aggregating $(\mathcal{GO},N)$-Implications}
	
	In \cite{RBB13}, it was performed a study on $\mathcal{I}_A$ fuzzy implications obtained by the composition of an aggregation function $A$ and a family $\mathcal{I}$ of fuzzy implications. Here we verify under which conditions an $\mathcal{I}_A$-operator is a $(\mathcal{GO},N)$-implication, whenever $\mathcal{I}$ is a family of $(\mathcal{GO},N)$-implication functions.
	
	\begin{definition} \cite{RBB13} \label{def_AF-operator}
		Let $A:[0,1]^{n} \rightarrow [0,1]$ be an aggregation function and $\mathcal{F} = \{F_{i}: [0,1]^{k} \rightarrow [0,1] \mid i \in \{1, 2, \ldots, n\} \}$ be a family of $k$-ary functions. An $\mathbf{(A,\mathcal{F})}$\textbf{-operator} on $[0,1]$, denoted by $\mathcal{F}_A : [0,1]^{k} \rightarrow [0,1]$, is obtained as the composition $\mathcal{F}_A(x_1,\ldots,x_k)$, given by:
		\begin{eqnarray}\label{eq-F_A}
		A(F_1(x_1,\hspace{-0.3ex} \ldots\hspace{-0.3ex},x_k), F_2(x_1,\hspace{-0.3ex}\ldots \hspace{-0.3ex},x_k),\hspace{-0.3ex} \ldots\hspace{-0.3ex}, F_n(x_1,\hspace{-0.3ex} \ldots \hspace{-0.3ex},x_k)).
		\end{eqnarray}
	\end{definition}
	
	In \cite{RBB13}, it has been shown that $\mathcal{F}_A$ preserves some properties of $F_i$ for $i \in \{1, 2, \ldots, n\}$. For example, if $F_i$ are fuzzy implications then $\mathcal{F}_A$ is also a fuzzy implication.
	
	\begin{lemma}\label{lem-a-cont}
		Let $A: [0,1]^{n} \rightarrow [0,1]$ be an aggregation function and $\mathcal{GO}^* = \{{\mathcal{GO}}_i: [0,1]^{k} \rightarrow [0,1] \mid i \in \{1, 2, \ldots, n\} \}$ be a family of general overlap functions. Then $\mathcal{GO}_{A}^{*}$ is a general overlap function whenever $A$ is continuous.
	\end{lemma}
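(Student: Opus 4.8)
The plan is to unfold the definition of the $(A,\mathcal{GO}^*)$-operator from Definition \ref{def_AF-operator}, writing $\mathcal{GO}_A^*(\vv{x}) = A(\mathcal{GO}_1(\vv{x}), \ldots, \mathcal{GO}_n(\vv{x}))$ for $\vv{x} = (x_1,\ldots,x_k)$, and then to verify the five defining conditions ($\mathcal{GO}1$)--($\mathcal{GO}5$) of Definition \ref{def-gen-ov} one at a time. The first three are boundary- and symmetry-type conditions and follow directly from the corresponding properties of the components together with the boundary behaviour (A1) of the aggregation function $A$. Concretely: for ($\mathcal{GO}1$), since every $\mathcal{GO}_i$ is invariant under permutations of its arguments, each input of $A$ is unchanged when $(x_1,\ldots,x_k)$ is permuted, hence so is $\mathcal{GO}_A^*$. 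For ($\mathcal{GO}2$), if $\prod_{i} x_i = 0$ then each $\mathcal{GO}_i(\vv{x}) = 0$ by its own ($\mathcal{GO}2$), so $\mathcal{GO}_A^*(\vv{x}) = A(0,\ldots,0) = 0$ by (A1); dually, for ($\mathcal{GO}3$), if $\prod_{i} x_i = 1$ then $\vv{x} = (1,\ldots,1)$, every $\mathcal{GO}_i(\vv{x}) = 1$, and $A(1,\ldots,1) = 1$.

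For ($\mathcal{GO}4$) I would invoke monotonicity of the composition: fixing all but one coordinate and increasing it, each inner $\mathcal{GO}_i$ does not decrease by its ($\mathcal{GO}4$), and then the coordinatewise monotonicity (A2) of $A$ yields that $\mathcal{GO}_A^*$ does not decrease either. Finally, ($\mathcal{GO}5$) is the only place where the extra hypothesis is needed: $\mathcal{GO}_A^*$ is the composition of the continuous maps $\mathcal{GO}_1,\ldots,\mathcal{GO}_n$ (each continuous by its ($\mathcal{GO}5$)) with $A$, so continuity of $A$ makes the whole composition continuous.

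The argument is essentially a bookkeeping verification, so the main obstacle is not technical difficulty but rather pinning down precisely where each hypothesis is used: the boundary condition (A1) drives ($\mathcal{GO}2$)--($\mathcal{GO}3$), monotonicity (A2) drives ($\mathcal{GO}4$), and continuity of $A$ is indispensable and irreplaceable for ($\mathcal{GO}5$) — an arbitrary aggregation function need not be continuous, which is exactly why the statement carries the hypothesis ``whenever $A$ is continuous''. I would close by remarking that all five conditions hold, hence $\mathcal{GO}_A^*$ is a general overlap function.
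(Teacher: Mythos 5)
Your proposal is correct and follows essentially the same route as the paper's proof: a direct condition-by-condition verification of ($\mathcal{GO}$1)--($\mathcal{GO}$5), using (A1) for the boundary conditions, (A2) together with monotonicity of the $\mathcal{GO}_i$ for ($\mathcal{GO}$4), and continuity of $A$ composed with the continuous $\mathcal{GO}_i$ for ($\mathcal{GO}$5). No gaps to report.
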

	
	\begin{proof}
		We will verify that $\mathcal{GO}_{A}^{*}$ satisfies the conditions that define a general overlap function:
		
		\begin{quote}	
			\begin{itemize}
				\item[($\mathcal{GO}$1)] Indeed, for all $x_1, \ldots, x_k \in [0,1]$, since $\mathcal{GO}_{i}$ is commutative for all $i \in \{1, \ldots, n\}$,
				\begin{eqnarray*}
					\mathcal{GO}_{A}^{*}\hspace{-1.0ex}&(&\hspace{-1.0ex}x_1, \ldots, x_r, \ldots, x_s, \ldots, x_k) =\\
					&=& A(\mathcal{GO}_{1}(x_1, \ldots, x_r, \ldots, x_s, \ldots, x_k), \ldots,\\
					&& \hspace{6.0ex} \mathcal{GO}_{n}(x_1, \ldots, x_r, \ldots, x_s, \ldots, x_k))\\
					&=& A(\mathcal{GO}_{1}(x_1, \ldots, x_s, \ldots, x_r, \ldots, x_k), \ldots,\\
					&& \hspace{6.0ex} \mathcal{GO}_{n}(x_1, \ldots, x_s, \ldots, x_r, \ldots, x_k))\\
					&=&\mathcal{GO}_{A}^{*}(x_1, \ldots, x_s, \ldots, x_r, \ldots, x_k),
				\end{eqnarray*}
				for any $r,s \in \{1, \ldots, k \}$.
				\item[($\mathcal{GO}$2)] If $\prod_{i=1}^k x_i = 0$, then, by ($\mathcal{GO}$2), $\mathcal{GO}_{i}(x_1, \ldots, x_k) = 0$ for all $i \in \{1, \ldots, n\}$, so
				\begin{eqnarray*}
					\mathcal{GO}_{A}^{*} \hspace{-1.3ex}&(& \hspace{-1.3ex} x_1, \ldots, x_k) =\\
					&=& \hspace{-0.7ex} A(\mathcal{GO}_{1}(x_1, \ldots, x_k), \ldots, \mathcal{GO}_{n}(x_1, \ldots, x_k))\\
					&=& \hspace{-0.7ex} A(0, \ldots, 0) \stackrel{(A1)}{=} 0.
				\end{eqnarray*}
				
				\item[($\mathcal{GO}$3)] If $\prod_{i=1}^k x_i = 1$, then, by ($\mathcal{GO}$3), $\mathcal{GO}_{i}(x_1, \ldots, x_k) = 1$ for all $i \in \{1, \ldots, n\}$, so
				\begin{eqnarray*}
					\mathcal{GO}_{A}^{*}\hspace{-1.3ex}&(& \hspace{-1.3ex}x_1, \ldots, x_k) =\\
					&=& \hspace{-0.7ex}  A(\mathcal{GO}_{1}(x_1, \ldots, x_k), \ldots, \mathcal{GO}_{n}(x_1, \ldots, x_k))\\
					&=& \hspace{-0.7ex} A(1, \ldots, 1)\stackrel{(A1)}{=} 1.
				\end{eqnarray*}
				
				\item[($\mathcal{GO}$4)] The result follows straightforward, since $A$ and $\mathcal{GO}_i$ are increasing, for all $i \in \{1, 2, \ldots, n\}$.
				
				\item[($\mathcal{GO}$5)] Since $A$ and $\mathcal{GO}_i$ are continuous, for all $i \in \{1, 2, \ldots, n\}$, the result follows straightforward.
			\end{itemize}	
		\end{quote}
		Therefore, $\mathcal{GO}_{A}^{*}$ is a general overlap function.
	\end{proof}
	
	\begin{proposition} \label{prop-GON-imp-agg}
		Let $A: [0,1]^{n} \rightarrow [0,1]$ be a continuous aggregation function and let $\mathcal{I} = \{I_{\mathcal{GO}_{i}}^{N_{i}}: [0,1]^{2} \rightarrow [0,1] \mid i \in \{1, \ldots, n\} \}$ be a family of $(\mathcal{GO},N)$-implications. Then,  $\mathcal{I}_A$ is a $(\mathcal{GO},N)$-implication whenever $N_{i} = N$ for $i \in \{1, 2, \ldots , n\}$ and $N$ is a strong negation.
		%
	\end{proposition}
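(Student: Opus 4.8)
The plan is to exhibit an explicit bivariate general overlap function $\mathcal{GO}^{*}$ for which $\mathcal{I}_A(x,y) = N(\mathcal{GO}^{*}(x,N(y)))$, thereby realizing $\mathcal{I}_A$ as a $(\mathcal{GO},N)$-implication generated by $\mathcal{GO}^{*}$ and $N$. First I would unfold the definitions: by Definition \ref{def_AF-operator} and Equation (\ref{NossaIMP_FOverlap}), using the hypothesis $N_i = N$ for every $i$, we have for all $x,y \in [0,1]$
\begin{eqnarray*}
	\mathcal{I}_A(x,y) &=& A(I_{\mathcal{GO}_1}^{N}(x,y), \ldots, I_{\mathcal{GO}_n}^{N}(x,y))\\
	&=& A(N(\mathcal{GO}_1(x,N(y))), \ldots, N(\mathcal{GO}_n(x,N(y)))).
\end{eqnarray*}

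The key observation is that the outer pattern $N(A(N(\cdot),\ldots,N(\cdot)))$ is precisely the $N$-dual $A_N$ of $A$ from Definition \ref{def_Ndual}. Since $N$ is strong it is a continuous involution, so applying $N$ to the displayed identity and collapsing $N(N(t))=t$ gives
\begin{eqnarray*}
	N(\mathcal{I}_A(x,y)) = A_N(\mathcal{GO}_1(x,N(y)), \ldots, \mathcal{GO}_n(x,N(y))).
\end{eqnarray*}
I would then set $\mathcal{GO}^{*} = \{\mathcal{GO}_1, \ldots, \mathcal{GO}_n\}$ and take the $(A_N,\mathcal{GO}^{*})$-operator $\mathcal{GO}^{*}_{A_N}$, so that $\mathcal{GO}^{*}_{A_N}(x,v) = A_N(\mathcal{GO}_1(x,v), \ldots, \mathcal{GO}_n(x,v))$; specializing $v=N(y)$ recovers the right-hand side above.

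It then remains to check that $\mathcal{GO}^{*}_{A_N}$ is a genuine general overlap function. By Proposition \ref{AN_aggregation}, $A_N$ is an aggregation function, and it is continuous as a composition of the continuous maps $A$ and $N$. Hence Lemma \ref{lem-a-cont}, applied to the continuous aggregation function $A_N$ and the family $\mathcal{GO}^{*}$, yields that $\mathcal{GO}^{*}_{A_N}$ is a general overlap function. Applying $N$ once more to the identity above (again using strongness) gives $\mathcal{I}_A(x,y) = N(\mathcal{GO}^{*}_{A_N}(x,N(y)))$, which is exactly Equation (\ref{NossaIMP_FOverlap}) for the generator $\mathcal{GO}^{*}_{A_N}$ and negation $N$; therefore $\mathcal{I}_A$ is a $(\mathcal{GO},N)$-implication.

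The main obstacle is conceptual rather than computational: one must recognize that aggregating the implications $I_{\mathcal{GO}_i}^{N}$ through $A$ corresponds, after conjugation by $N$, to aggregating the underlying general overlap functions $\mathcal{GO}_i$ through the $N$-dual $A_N$. This is precisely where the hypotheses enter in an essential way. The strongness of $N$ supplies the involution needed to move $N$ in and out and to erase $N(N(\cdot))$; the common value $N_i = N$ is what allows a single negation to be factored out of all $n$ arguments of $A$; and the continuity of $A$ is exactly the assumption required by Lemma \ref{lem-a-cont}. Once this dual reformulation is in place, no independent verification of ($\mathcal{GO}$1)--($\mathcal{GO}$5) is needed, since everything reduces to Proposition \ref{AN_aggregation} and Lemma \ref{lem-a-cont}.
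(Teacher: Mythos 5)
Your proposal is correct and follows essentially the same route as the paper's own proof: unfold $\mathcal{I}_A$ using $N_i=N$, recognize the $N$-dual pattern to rewrite it as $N(\mathcal{GO}^{*}_{A_N}(x,N(y)))$ via strongness of $N$, and then invoke Proposition \ref{AN_aggregation} together with Lemma \ref{lem-a-cont} to conclude that $\mathcal{GO}^{*}_{A_N}$ is a general overlap function. No substantive difference from the paper's argument.
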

	
	\begin{proof}
		Consider the family of $(\mathcal{GO},N)$-implications, $\mathcal{I} = \{I_{\mathcal{GO}_{i}}^{N_{i}}: [0,1]^{2} \rightarrow [0,1] \mid i \in \{1, 2, \ldots, n\} \}$. Then, since $N_{i} = N$ and $N$ is a strong negation, for all $0 \leq i \leq n$, 	$\mathcal{I}_{A}(x,y)=$
		\begin{eqnarray*}
		 &\stackrel{\text{Eq.} (\ref{eq-F_A})}{=}& \hspace{-2.3ex} A(I_{\mathcal{GO}_{1}}^{N_{1}}(x,y), \ldots, I_{\mathcal{GO}_{n}}^{N_{n}}(x,y))\\
			& \stackrel{\text{Eq.} (\ref{NossaIMP_FOverlap})}{=}& \hspace{-2.7ex} A(N_{1}(\mathcal{GO}_{1}(x,N_{1}(y))), \ldots, N_{n}(\mathcal{GO}_{n}(x,N_{n}(y))))\\
			&=& \hspace{-2.7ex}A(N(\mathcal{GO}_{1}(x,N(y))), \ldots, N(\mathcal{GO}_{n}(x,N(y))))\\
			& \stackrel{\text{Eq.} (\ref{eq-f_N}) \ \ref{N5}}{=}& \hspace{-0.5ex}  N(A_{N}(\mathcal{GO}_{1}(x,N(y)), \ldots, \mathcal{GO}_{n}(x,N(y))))\\
			& \stackrel{\text{Eq.} (\ref{eq-F_A})}{=}&\hspace{-2ex} N(\mathcal{GO}_{A_{N}}^{*}(x,N(y))) \\
			& \stackrel{\text{Eq.} (\ref{NossaIMP_FOverlap})}{=}&\hspace{-2.5ex} I_{\mathcal{GO}_{A_{N}}^{*}}^N(x,y).
		\end{eqnarray*}
		By Proposition \ref{AN_aggregation}, $A_{N}$ is an aggregation function. Besides, by the continuity of $A$ and $N$, we have that $A_{N}$ is continuous. So, by Lemma \ref{lem-a-cont}, $\mathcal{GO}_{A_{N}}^{*}$ is a general overlap function. Therefore, since $\mathcal{I}_{A} = I_{\mathcal{GO}_{A_{N}}^{*}}^N$, then $\mathcal{I}_{A}$ is a $(\mathcal{GO},N)$-implication function.
	\end{proof}

	\begin{corollary}
		Let $A: [0,1]^{n} \rightarrow [0,1]$ be a continuous aggregation function and let $\mathcal{I} = \{I_{\mathcal{GO}_{i}}^{N_{i}}: [0,1]^{2} \rightarrow [0,1] \mid i \in \{1, 2, \ldots, n\} \}$, for $i \in \{1, 2, \ldots , n\}$, be a family of $(\mathcal{GO},N)$-implications. If $N$ is a strong negation, then for $\mathcal{I}_{A}$ with $N_{i} = N$ for $i \in \{1, 2, \ldots , n\}$, it holds that:
		
		\begin{enumerate}
			\item [(i)] 
			$\mathcal{I}_{A}$
			satisfies L-CP(N);
			\item [(ii)] If $N$ is also strict, then
			$\mathcal{I}_{A}$ satisfies R-CP(N$^{-1}$).
			\item [(iii)] $\mathcal{I}_{A}$ satisfies CP(N).
		\end{enumerate}
	\end{corollary}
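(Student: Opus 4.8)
The plan is to reduce everything to a single structural observation: under the stated hypotheses, $\mathcal{I}_A$ is itself a $(\mathcal{GO},N)$-implication, after which the three contrapositive properties can be read off directly from the already-established Proposition~\ref{Prop.(O,N)}.

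First I would invoke Proposition~\ref{prop-GON-imp-agg}. Since $N_i = N$ for every $i \in \{1,\ldots,n\}$, the negation $N$ is strong, and $A$ is continuous, that proposition gives $\mathcal{I}_A = I_{\mathcal{GO}_{A_N}^*}^N$, where $\mathcal{GO}_{A_N}^*$ is a bivariate general overlap function. Hence $\mathcal{I}_A$ is a $(\mathcal{GO},N)$-implication generated by the general overlap function $\mathcal{GO}_{A_N}^*$ together with the negation $N$.

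With this identification in hand, each item follows directly from Proposition~\ref{Prop.(O,N)} applied to $I_{\mathcal{GO}_{A_N}^*}^N$. For (i), Proposition~\ref{Prop.(O,N)}(i) states that every $(\mathcal{GO},N)$-implication satisfies L-CP($N$) with no extra assumption, so $\mathcal{I}_A$ satisfies L-CP($N$). For (ii), when $N$ is additionally strict, Proposition~\ref{Prop.(O,N)}(ii) yields R-CP($N^{-1}$) for any $(\mathcal{GO},N)$-implication, giving the claim for $\mathcal{I}_A$. For (iii), since $N$ is strong by hypothesis, Proposition~\ref{Prop.(O,N)}(iv) guarantees that $\mathcal{I}_A$ satisfies CP($N$).

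The argument requires essentially no computation; the only point to verify carefully is that the hypotheses of the corollary match exactly those needed to apply the two earlier results. In particular, the strongness of $N$ is used both to identify $\mathcal{I}_A$ as a $(\mathcal{GO},N)$-implication via Proposition~\ref{prop-GON-imp-agg} and to obtain (iii) via Proposition~\ref{Prop.(O,N)}(iv), while the strictness of $N$ is invoked only for (ii). Thus I expect no genuine obstacle here: the corollary is a direct consequence of the machinery already developed, and the only thing to get right is the bookkeeping of which hypothesis feeds which conclusion.
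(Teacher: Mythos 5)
Your proposal is correct and follows exactly the paper's own route: the paper's proof is simply ``Straightforward from Propositions \ref{Prop.(O,N)} and \ref{prop-GON-imp-agg},'' and you have spelled out precisely that reduction, identifying $\mathcal{I}_A$ as the $(\mathcal{GO},N)$-implication $I_{\mathcal{GO}_{A_N}^*}^N$ and then reading off items (i)--(iii) from Proposition \ref{Prop.(O,N)}(i), (ii) and (iv). The bookkeeping of which hypothesis feeds which conclusion is accurate.
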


	\begin{proof}
		Straightforward from Propositions \ref{Prop.(O,N)} and \ref{prop-GON-imp-agg}.
	\end{proof}

\section{Intersections between Families of Fuzzy Implications}

In this section we present results regarding the intersections that exist among the families of fuzzy implications $(\mathcal{GO},N)$, $(G,N)$, $QL$, $R_{O}$ and $D$-implications derived from (general) overlap and grouping functions $O$ and $G$, respectively, and fuzzy negations $N$. We will represent these families by $\mathbb{I}_{\mathcal{G}\mathbb{O}}^{\mathbb{N}}, \mathbb{I}_{\mathbb{G,N}}, \mathbb{I}_{\mathbb{O,G,N}}$,  $\mathbb{I}_{\mathbb{O}}$ and $\mathbb{I}_{\mathbb{D}}$, respectively. 

\subsection{Intersections between $(\mathcal{GO},N)$ and $(G,N)$-implications}

\begin{proposition}
	Let $N$ and $N'$ be fuzzy negations, $\mathcal{GO}$ be a bivariate general overlap function and $G$ be a grouping function such that $I_{\mathcal{GO}}^{N} = I_{G, N'}$.
	\begin{itemize}
		\item [(i)] If $N$ is strict and $N'$ is frontier, then $\mathcal{GO}$ is an overlap function.
		\item [(ii)] If  $1$ is the neutral element of $\mathcal{GO}$, then:
		\begin{enumerate}
			\item [(a)] If $N$ is a strong negation, then $N = N'$;
			\item [(b)] If $N$ is continuous and $N = N'$, then $N$ is strong;
			\item [(c)] $N$ is strong if and only if $0$ is the neutral element of $G$.
		\end{enumerate}
		\item [(iii)] If  $0$ is the neutral element of $G$, then:
		\begin{enumerate}
			\item [(a)] $N'$ is strong if and only if $N' = N $;
			\item [(b)] $N'$ is strong if and only if $1$ is the neutral element of $\mathcal{GO}$.
		\end{enumerate}
	\end{itemize}
\end{proposition}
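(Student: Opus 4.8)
The hypothesis $I_{\mathcal{GO}}^{N} = I_{G,N'}$ is, by Eqs.~(\ref{NossaIMP_FOverlap}) and (\ref{eq-GN-impl}), the single functional equation $N(\mathcal{GO}(x,N(y))) = G(N'(x),y)$, valid for all $x,y \in [0,1]$. My plan is to extract information by feeding this identity the boundary arguments $x=1$ and $y=0$, since these slices reproduce the natural negations of the two implications and interact cleanly with the neutral-element hypotheses and with the boundary conditions ($G2$), ($G3$), ($\mathcal{GO}2$), ($\mathcal{GO}3$). Throughout I will use that a strong negation is strict and that an antitone bijection of $[0,1]$ is automatically continuous.

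For (i), since $N$ is strict it is a bijection, so as $y$ runs over $[0,1]$ the value $N(y)$ attains every $b \in [0,1]$. Thus, to verify ($\mathcal{GO}2a$) I would take any pair with $\mathcal{GO}(a,b)=0$, write $b=N(y)$, and read the functional equation at $(a,y)$ as $1 = N(0) = G(N'(a),N^{-1}(b))$; by ($G3$) this forces $N'(a)=1$ or $N^{-1}(b)=1$, and the frontier property of $N'$ together with $N^{-1}(0)=1$ then yields $a=0$ or $b=0$. Condition ($\mathcal{GO}3a$) follows symmetrically from $\mathcal{GO}(a,b)=1$ via ($G2$) and $N^{-1}(1)=0$. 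Since a bivariate general overlap function satisfying ($\mathcal{GO}2a$) and ($\mathcal{GO}3a$) is exactly an overlap function (as already noted in the proof of Prop.~\ref{prop-dual}), this settles (i).

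For (ii), where $1$ is the neutral element of $\mathcal{GO}$, I would evaluate the functional equation at $y=0$ and at $x=1$ to obtain the two key slices $N(x)=G(N'(x),0)$ and $N(N(y))=G(0,y)$; the first is just the equality of natural negations guaranteed by Prop.~\ref{prop-neutral-e}(i). Statement (c) is then immediate: by commutativity of $G$, the map $y \mapsto G(0,y)$ is the identity exactly when $0$ is neutral for $G$, so $N \circ N = \mathrm{id}$ iff $0$ is neutral for $G$. For (a) I would combine (c) with the first slice: strongness gives $0$ neutral for $G$, whence $N(x)=G(0,N'(x))=N'(x)$. For (b) I would note that continuity of $N$ makes its range all of $[0,1]$, so $N(x)=G(0,N(x))$ forces $G(0,u)=u$ for every $u$, i.e.\ $0$ neutral for $G$, and (c) then delivers strongness.

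For (iii), where $0$ is the neutral element of $G$, the substitutions $x=1$ and $y=0$ give (A) $N(\mathcal{GO}(1,N(y)))=y$ and (B) $N(\mathcal{GO}(x,1))=N'(x)$. The main obstacle is that $N$ is not assumed strict here, so I must first \emph{deduce} it. Writing $\psi(z)=\mathcal{GO}(1,z)$, identity (A) reads $N\circ\psi\circ N=\mathrm{id}$; the copy of $N$ on the right forces injectivity and the copy on the left forces surjectivity, so $N$ is a bijection, and being antitone it is strict and continuous. Once $N$ is invertible, (A) rearranges to $\psi=N^{-1}\circ N^{-1}$, that is $\mathcal{GO}(1,z)=N^{-1}(N^{-1}(z))$, and then (B), via commutativity of $\mathcal{GO}$, collapses to $N'=N^{-1}$. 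With these two identities in hand, (a) and (b) become purely compositional: $N'=N^{-1}$ is strong iff $N^{-1}=N$ iff $N'=N$, giving (a); and $N^{-1}\circ N^{-1}=\mathrm{id}$ holds iff $\mathcal{GO}(1,z)=z$ for all $z$, i.e.\ iff $1$ is the neutral element of $\mathcal{GO}$, giving (b). I expect the bijectivity deduction from (A) to be the only genuinely delicate point; the remaining steps are bookkeeping with the neutral-element definitions and the boundary conditions.
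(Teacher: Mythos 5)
Your proof is correct. The core strategy is the same as the paper's -- read the single functional equation $N(\mathcal{GO}(x,N(y)))=G(N'(x),y)$ at the boundary arguments $x=1$ and $y=0$ and combine with the neutral-element and boundary conditions -- and your part (i) and part (ii)(c) coincide with the paper's almost verbatim. Where you diverge is in how the auxiliary identities are obtained: the paper repeatedly imports named properties of the two classes (the (NP) and R-CP$(N')$ properties of $(G,N')$-implications from the earlier literature, and L-CP$(N)$ from Proposition~\ref{Prop.(O,N)}), whereas you extract the same identities ($N=N'$ on natural negations, $N'\circ N=\mathrm{id}$) directly from the raw equation. The most substantive difference is in (iii): the paper derives $N'\circ N=\mathrm{id}$ and argues each subitem separately (its converse direction of (iii)(b) in fact silently reuses (ii)(c) rather than the cited (ii)(a)), while you first prove that $N$ must be a bijection from $N\circ\psi\circ N=\mathrm{id}$ with $\psi(z)=\mathcal{GO}(1,z)$, hence strict, and then obtain the structural identities $\mathcal{GO}(1,\cdot)=N^{-1}\circ N^{-1}$ and $N'=N^{-1}$, from which (iii)(a) and (iii)(b) follow by pure composition. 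Your route costs one genuinely delicate step (the bijectivity deduction, which you justify correctly: injectivity from the right factor, surjectivity from the left, and continuity of a monotone bijection of $[0,1]$) but buys a self-contained argument that exposes more structure -- in particular, it shows that under hypothesis (iii) the negation $N$ is automatically strict and $N'$ is its inverse, a fact the paper's proof does not make explicit.
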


\begin{proof}
	\begin{itemize}
		\item [(i)] Indeed, if $\mathcal{GO}(x,y) = 0$, then
		\begin{eqnarray*}
			N(\mathcal{GO}\hspace{-2.0ex}&(&\hspace{-2.0ex}x,y)) = 1 \Rightarrow \\
			&\Rightarrow&  I_{G, N}(x,N^{-1}(y)) = I_{\mathcal{GO}}^{N} (x, N^{-1}(y))= 1\\
			&\Rightarrow& G(N(x), N^{-1}(y)) = 1\\
			&\stackrel{\ref{G3}}{\Rightarrow}& N(x) =1 \text{ or } N^{-1}(y) = 1\\
			&\stackrel{\ref{G3}}{\Rightarrow}& x = 0 \text{ or } y = 0.
		\end{eqnarray*}
		And, if $\mathcal{GO}(x,y) = 1$, then
		\begin{eqnarray*}
			N(\mathcal{GO}\hspace{-2.0ex}&(&\hspace{-2.0ex}x,y)) = 0 \Rightarrow \\
			&\Rightarrow& I_{G, N}(x,N^{-1}(y)) = I_{\mathcal{GO}}^{N} (x, N^{-1}(y))= 0\\
			&\Rightarrow& G(N(x), N^{-1}(y)) = 0\\
			&\stackrel{\ref{G2}}{\Rightarrow}& N(x) = 0 \text{ and } N^{-1}(y) = 0\\
			&\stackrel{\ref{G2}}{\Rightarrow}& x = 1 \text{ and } y = 1.
		\end{eqnarray*}
		So, $\mathcal{GO}$ satisfies \ref{O2} and \ref{O3}. We conclude that $\mathcal{GO}$ is a overlap function. 
		\item [(ii)] Indeed,
		\begin{enumerate}
			\item [(a)] by Proposition 3.4$(xxi)$ in \cite{DBS14} we have that $I_{G, N'}$ satisfies $R-CP(N')$, so
			\begin{eqnarray*}
				N(y)\hspace{-1.5ex} &=& \hspace{-1.5ex} I_{\mathcal{GO}}^{N}(y,0)= I_{G, N'}(y,0)\\
				& \stackrel{\text{R-CP(N')}}{=}& \hspace{-1.0ex} I_{G, N'}(1,N'(y)) = I_{\mathcal{GO}}^{N}(1,N'(y))\\
				&=& \hspace{-1.5ex} N(\mathcal{GO}(1, N(N'(y)))) \hspace{-0.5ex} \stackrel{(N5)}{=}\hspace{-0.5ex} N'(y),
			\end{eqnarray*}
			for all $y \in [0,1]$. Therefore, $N=N'$.
			
			\item [(b)] Since $I_{G, N'}$ satisfies $R-CP(N')$ and $I_{\mathcal{GO}}^{N} = I_{G, N'}$, $I_{\mathcal{GO}}^{N}(x,N'(y)) = I_{\mathcal{GO}}^{N}(y,N'(x))$. So, for $x=1$, $I_{\mathcal{GO}}^{N}(1,N'(y)) = I_{\mathcal{GO}}^{N}(y,N'(1))$, i.e., 
			\begin{eqnarray*}
				N(\mathcal{GO}(1, N(N'(y)))) = N(\mathcal{GO}(y, N(N'(1)))).
			\end{eqnarray*}
			Since $1$ is the neutral element of $\mathcal{GO}$ and $N=N'$, for all $y \in [0,1]$, $N(N(N(y))) = N(y)$. Now, since $N$ is continuous, for every $x \in [0,1]$, there is $y \in [0,1]$ such that $x = N(y)$. So, $N(N(x)) = x$, for all $x \in [0,1]$.
			
			\item [(c)] For all $y \in [0,1]$, $N(N(y)) = N(\mathcal{GO}(1, N(y))) = I_{\mathcal{GO}}^{N}(1,y) = I_{G, N'}(1,y) = G(N'(1), y)$ $= G(0,y)$. So the result holds.
		\end{enumerate}
		\item [(iii)] Indeed,
		\begin{enumerate}
			\item [(a)] by Proposition 3.4(ii) in \cite{DBS14} we have that $I_{G, N'}$ satisfies $(NP)$, so
			\begin{eqnarray*}
				y &\stackrel{\text{(NP)}}{=}& I_{G, N'}(1,y) = I_{\mathcal{GO}}^{N}(N(0),y)\\
				& \stackrel{\text{Prop. \ref{Prop.(O,N)}(i)}}{=}& I_{\mathcal{GO}}^{N}(N(y),0)
				= I_{G, N'}(N(y),0)\\
				&=&  N'(N(y)),
			\end{eqnarray*}
			for all $y \in [0,1]$. Therefore, the results follows.
			\item [(b)] Consider $N'$ as a strong negation, then by the previous item, $N'=N$. So,
			\begin{eqnarray*}
				x &=& N'(N'(x)) = N'(G(N'(x),0)) \\
				&=& N'(I_{G, N'}(x,0)) = N'(I_{\mathcal{GO}}^{N}(x,0))\\
				&=& N'(N(\mathcal{GO}(x, N(0)))) \stackrel{\text{N'=N}}{=} \mathcal{GO}(x,1),	
			\end{eqnarray*}
			for all $x \in [0,1]$. Therefore, $1$ is the neutral element of $\mathcal{GO}$. Conversely, $N(x) = N(\mathcal{GO}(x, N(0))) = I_{\mathcal{GO}}^{N}(x,0)= I_{G, N'}(x,0) =N'(x)$,
			and therefore by $(a)$ of item $(ii)$, $N'$ is a strong negation.
		\end{enumerate}
	\end{itemize}
\end{proof}

The next propositions show that strict $(\mathcal{GO},N)$-impli-\ cation functions generated by general overlap functions satisfying ($\mathcal{GO}$2a) and ($\mathcal{GO}$3a) are strict $(G,N)$-implication functions and vice-versa.

\begin{proposition} \label{prop-eq-gn-impl}  Let $N$ be a strict fuzzy negation, $\mathcal{GO}$ be a general overlap function satisfying ($\mathcal{GO}$2a) and ($\mathcal{GO}$3a), and  $G$ be the grouping function defined  in Eq. (\ref{Group_of_GOver}). Then, $I_{\mathcal{GO}}^{N}=I_{G,N^{-1}}$.
\end{proposition}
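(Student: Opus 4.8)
The plan is to prove the identity by a direct computation, unwinding the two definitions and exploiting that a strict negation is a bijection. First I would record the ingredients that make the statement meaningful: since $N$ is strict it is a continuous, strictly decreasing bijection of $[0,1]$, so its inverse $N^{-1}$ exists, is again a strict fuzzy negation, and satisfies $N(N^{-1}(x))=x$ for every $x\in[0,1]$. Moreover, because $\mathcal{GO}$ satisfies ($\mathcal{GO}$2a) and ($\mathcal{GO}$3a) and $N$ is strict, Proposition \ref{prop-dual} guarantees that the function $G(x,y)=N(\mathcal{GO}(N(x),N(y)))$ of Eq. (\ref{Group_of_GOver}) is genuinely a grouping function, so the right-hand side $I_{G,N^{-1}}$ is a well-defined $(G,N^{-1})$-implication and the claimed equality is between two bona fide fuzzy implication functions.

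Next I would simply evaluate the $(G,N^{-1})$-implication at an arbitrary pair $(x,y)$. By Eq. (\ref{eq-GN-impl}) we have $I_{G,N^{-1}}(x,y)=G(N^{-1}(x),y)$, and substituting the expression for $G$ from Eq. (\ref{Group_of_GOver}) gives
\begin{eqnarray*}
I_{G,N^{-1}}(x,y) &=& N(\mathcal{GO}(N(N^{-1}(x)),\, N(y)))\\
&=& N(\mathcal{GO}(x,\, N(y))),
\end{eqnarray*}
where the last step uses the identity $N(N^{-1}(x))=x$. By Eq. (\ref{NossaIMP_FOverlap}) the final expression is exactly $I_{\mathcal{GO}}^{N}(x,y)$, and since $x,y$ were arbitrary the two implication functions coincide.

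There is essentially no hard step here: once $N^{-1}$ is available and $G$ is known to be a grouping function, the whole argument reduces to a one-line substitution. The only point that requires a little care — and the only place where the hypotheses ($\mathcal{GO}$2a), ($\mathcal{GO}$3a) and the strictness of $N$ are actually used — is justifying that $G$ in Eq. (\ref{Group_of_GOver}) is a legitimate grouping function so that $I_{G,N^{-1}}$ even makes sense; this I would cite directly from Proposition \ref{prop-dual} rather than reprove. I would also note in passing that the converse inclusion is symmetric, since Proposition \ref{prop-dual} recovers a general overlap function satisfying ($\mathcal{GO}$2a) and ($\mathcal{GO}$3a) from any grouping function via the same duality.
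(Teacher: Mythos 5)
Your proof is correct and is essentially the same as the paper's: the paper runs the identical one-line substitution in the opposite direction, starting from $I_{\mathcal{GO}}^{N}(x,y)=N(\mathcal{GO}(x,N(y)))$, inserting $x=N(N^{-1}(x))$, and recognizing the result as $G(N^{-1}(x),y)=I_{G,N^{-1}}(x,y)$. Your additional remark that Proposition \ref{prop-dual} is what makes $G$ a legitimate grouping function is a sensible (and implicitly assumed) clarification, not a divergence in method.
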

\begin{proof} For all $x,y \in [0,1]$, since $N$ is strict, it follows that:
	\begin{eqnarray*}
		I_{\mathcal{GO}}^{N}(x,y) &=&  N (\mathcal{GO}(x, N(y)))  \\ &=& N (\mathcal{GO}(N(N^{-1}(x)), N(y)))\\
		&\stackrel{\text{Eq.}(\ref{Group_of_GOver})}{=}& G(N^{-1}(x), y)   \stackrel{\text{Eq.}(\ref{eq-GN-impl})}=  I_{G,N^{-1}} (x,y).
	\end{eqnarray*}
	
\end{proof}

\begin{proposition} \label{prop-eq-gn-impl2}  Let $N$ be a strict negation, $G$ be a grouping function and  $\mathcal{GO}$ be the general overlap function  defined  in Equation (\ref{Over_of_Group}). Then, $I_{G,N}=I_{\mathcal{GO}}^{N^{-1}}$.
\end{proposition}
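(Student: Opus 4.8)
The plan is to prove the identity by a direct computation, exactly dual to the argument used in Proposition~\ref{prop-eq-gn-impl}. First I would fix $x,y\in[0,1]$ and unfold the right-hand side using the definition of a $(\mathcal{GO},N)$-implication (Eq.~(\ref{NossaIMP_FOverlap})) with the negation $N^{-1}$ in place of $N$, obtaining
\[
I_{\mathcal{GO}}^{N^{-1}}(x,y)=N^{-1}\big(\mathcal{GO}(x,N^{-1}(y))\big).
\]

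Next I would substitute the expression for $\mathcal{GO}$ prescribed by Eq.~(\ref{Over_of_Group}), namely $\mathcal{GO}(u,v)=N(G(N(u),N(v)))$, evaluated at $u=x$ and $v=N^{-1}(y)$. This yields $\mathcal{GO}(x,N^{-1}(y))=N\big(G(N(x),N(N^{-1}(y)))\big)$. The key step is then to cancel the composed negations: since $N$ is strict it is a continuous bijection on $[0,1]$, so $N\circ N^{-1}=N^{-1}\circ N=\mathrm{id}$. Applying $N(N^{-1}(y))=y$ on the inside, and then $N^{-1}(N(\,\cdot\,))=\,\cdot\,$ on the outside, collapses the whole expression to $G(N(x),y)$, which by Eq.~(\ref{eq-GN-impl}) is precisely $I_{G,N}(x,y)$.

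I expect no genuine obstacle here: the statement is the exact converse/dual of Proposition~\ref{prop-eq-gn-impl}, and the only point requiring care is the legitimacy of the inverse cancellations, which is guaranteed by the strictness of $N$ (the same hypothesis under which $N^{-1}$ has already been used, e.g.\ in Proposition~\ref{prop-GO-from-GON-Imp}). One may also note that, by the reciprocal part of Proposition~\ref{prop-dual}, the $\mathcal{GO}$ built from $G$ via Eq.~(\ref{Over_of_Group}) is indeed a general overlap function (satisfying moreover ($\mathcal{GO}$2a) and ($\mathcal{GO}$3a)), so $I_{\mathcal{GO}}^{N^{-1}}$ is a well-defined $(\mathcal{GO},N)$-implication and the asserted equality is an identity between two legitimate members of the respective families.
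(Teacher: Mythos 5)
Your proposal is correct and follows essentially the same route as the paper: both proofs unfold the definitions, substitute $\mathcal{GO}(u,v)=N(G(N(u),N(v)))$ from Eq.~(\ref{Over_of_Group}), and cancel $N\circ N^{-1}$ using strictness of $N$ to reduce everything to $G(N(x),y)$. The only cosmetic difference is that you start from $I_{\mathcal{GO}}^{N^{-1}}$ and simplify, while the paper starts from $I_{G,N}$ and inserts the cancelling pair; your extra remark on well-definedness via Proposition~\ref{prop-dual} is a harmless addition.
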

\begin{proof} For all $x,y \in [0,1]$, since $N$ is strict, it follows that:
	\begin{eqnarray*}
		I_{G,N} (x,y) &\stackrel{\text{Eq.}(\ref{eq-GN-impl})}{=}&  N^{-1}(N(G(N(x), N(N^{-1}(y)))))\\
		&\stackrel{\text{Eq.}(\ref{Over_of_Group})}{=}& N^{-1} (\mathcal{GO}(x, N^{-1}(y)) \stackrel{\text{Eq.}(\ref{NossaIMP_FOverlap})}=  I_{\mathcal{GO}}^{N^{-1}}(x,y).
	\end{eqnarray*}
\end{proof}


\begin{corollary} \label{dualidade(O,N)(G,N)}
	Let $I$ be a fuzzy implication. Then, $I$ is a strict $(\mathcal{GO},N)$-implication with $\mathcal{GO}$ satisfying conditions ($\mathcal{GO}$2a) and ($\mathcal{GO}$3a) if and only if $I$ is a strict $(G,N)$-implication.
\end{corollary}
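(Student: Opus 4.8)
The plan is to prove both implications of the equivalence by directly invoking Propositions \ref{prop-eq-gn-impl} and \ref{prop-eq-gn-impl2}, together with the elementary fact that the inverse of a strict fuzzy negation is again a strict fuzzy negation. Since the substantive content has already been placed in the two preceding propositions and in Proposition \ref{prop-dual}, the corollary amounts to packaging these membership statements into a single characterization, and the only genuine bookkeeping concerns the negations.

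First I would handle the forward direction. Assume $I$ is a strict $(\mathcal{GO},N)$-implication, i.e.\ $I = I_{\mathcal{GO}}^{N}$ for some strict negation $N$ and some bivariate general overlap function $\mathcal{GO}$ satisfying ($\mathcal{GO}$2a) and ($\mathcal{GO}$3a). Letting $G$ be the function produced from $\mathcal{GO}$ and $N$ via Eq.~(\ref{Group_of_GOver}) — which is indeed a grouping function by Proposition \ref{prop-dual} — Proposition \ref{prop-eq-gn-impl} gives $I = I_{\mathcal{GO}}^{N} = I_{G,N^{-1}}$. As $N$ is strict, its inverse $N^{-1}$ exists and is itself a strict negation, so this display exhibits $I$ as a $(G,N)$-implication built from a strict negation, i.e.\ as a strict $(G,N)$-implication.

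For the converse, assume $I$ is a strict $(G,N)$-implication, so $I = I_{G,N}$ for some grouping function $G$ and some strict negation $N$. Define $\mathcal{GO}$ from $G$ and $N$ by Eq.~(\ref{Over_of_Group}); by the reciprocal part of Proposition \ref{prop-dual}, this $\mathcal{GO}$ is a general overlap function satisfying ($\mathcal{GO}$2a) and ($\mathcal{GO}$3a). Proposition \ref{prop-eq-gn-impl2} then yields $I = I_{G,N} = I_{\mathcal{GO}}^{N^{-1}}$, and since $N^{-1}$ is again strict, this displays $I$ as a strict $(\mathcal{GO},N)$-implication whose underlying general overlap function satisfies the two required boundary conditions. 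The only point requiring attention in both directions is that the negation of the target implication is the inverse $N^{-1}$ rather than $N$ itself, so one must note that strictness is preserved under inversion; beyond this there is no real obstacle, and the corollary simply records the equality of the strict parts of the families $\mathbb{I}_{\mathcal{G}\mathbb{O}}^{\mathbb{N}}$ and $\mathbb{I}_{\mathbb{G,N}}$.
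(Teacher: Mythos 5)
Your proposal is correct and follows exactly the route the paper takes: the paper's proof of this corollary is simply ``Straightforward from Propositions \ref{prop-eq-gn-impl} and \ref{prop-eq-gn-impl2},'' and you have merely filled in the details (invoking Proposition \ref{prop-dual} for the construction of $G$ and $\mathcal{GO}$, and observing that the inverse of a strict negation is strict). No gaps.
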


\begin{proof} Straightforward from Propositions \ref{prop-eq-gn-impl} and \ref{prop-eq-gn-impl2}.
\end{proof}

By Corollary \ref{dualidade(O,N)(G,N)} we have that the intersection of $(\mathcal{GO},N)$ and $(G,N)$-implications is non-empty: $\mathbb{I}_{\mathcal{G}\mathbb{O}}^{\mathbb{N}} \cap \mathbb{I}_{\mathbb{G,N}} \neq \emptyset$.
In addition, we also conclude that $\mathbb{I}_{\mathcal{G}\mathbb{O}}^{\mathbb{N^{*}}} = \mathbb{I}_{\mathbb{G,N^{*}}} \subseteq \mathbb{I}_{\mathcal{G}\mathbb{O}}^{\mathbb{N}} \cap \mathbb{I}_{\mathbb{G,N}}$, where $\mathbb{I}_{\mathcal{G}\mathbb{O}}^{\mathbb{N^{*}}}$ is the family of all strict $(\mathcal{GO},N)$-implications and, analogously,  $\mathbb{I}_{\mathbb{G,N^{*}}}$ is the family of all strict $(G,N)$-implications.  


\begin{proposition}\label{prop-GON-GN-RanI}
	Let $I \in \mathcal{FI}$  such that $Ran(I)\neq [0,1]$. If $I$ is a $(\mathcal{GO},N)$-implication then $I$ is not a $(G,N)$-implication.
\end{proposition}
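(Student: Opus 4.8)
The plan is to show that every $(G,N)$-implication is surjective onto $[0,1]$, so that a fuzzy implication whose range is a proper subset of $[0,1]$ cannot belong to the family $\mathbb{I}_{\mathbb{G,N}}$. Under this reading the hypothesis that $I$ is a $(\mathcal{GO},N)$-implication is not really used in the deduction itself; it only guarantees that fuzzy implications $I$ with $Ran(I)\neq[0,1]$ genuinely occur in the family $\mathbb{I}_{\mathcal{G}\mathbb{O}}^{\mathbb{N}}$, making the statement non-vacuous.

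First I would fix an arbitrary $(G,N)$-implication $I_{G,N}(x,y)=G(N(x),y)$ and examine the one-dimensional slice obtained by setting $x=1$. Since $N(1)=0$ by \ref{N2}, this slice is $I_{G,N}(1,y)=G(0,y)$. By \ref{G2} we have $G(0,0)=0$ and by \ref{G3} we have $G(0,1)=1$, while \ref{G5} ensures that the map $y\mapsto G(0,y)$ is continuous on $[0,1]$.

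Next I would invoke the intermediate value theorem on this continuous map: for every $c\in[0,1]$ there exists $y\in[0,1]$ with $G(0,y)=c$. Hence $[0,1]\subseteq Ran(I_{G,N})$, and since the codomain is $[0,1]$ we conclude $Ran(I_{G,N})=[0,1]$ for every grouping function $G$ and every fuzzy negation $N$. In other words, full range is a necessary condition for membership in $\mathbb{I}_{\mathbb{G,N}}$.

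Finally I would argue by contraposition. If $I$ were simultaneously a $(\mathcal{GO},N)$-implication and a $(G,N)$-implication, the previous step would force $Ran(I)=[0,1]$, contradicting the standing assumption $Ran(I)\neq[0,1]$; therefore $I$ is not a $(G,N)$-implication. I do not expect a serious obstacle here: the only point that needs care is that surjectivity must be extracted from a single continuous slice (the line $x=1$) rather than from the full bivariate function, since the monotonicity axioms \ref{G4} alone would not yield it. The continuity axiom \ref{G5} is thus the essential ingredient that the family $\mathbb{I}_{\mathbb{G,N}}$ always enjoys but which a range-deficient $(\mathcal{GO},N)$-implication must violate.
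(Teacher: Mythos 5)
Your proof is correct and follows essentially the same route as the paper: show that every $(G,N)$-implication is surjective onto $[0,1]$ by applying the intermediate value theorem to a continuous boundary slice of $G$, and conclude by contraposition. In fact your version is the cleaner one — you correctly use the slice $I(1,y)=G(N(1),y)=G(0,y)$, which runs continuously from $G(0,0)=0$ to $G(0,1)=1$, whereas the paper's printed proof writes $I(0,x)=G(N(0),x)=G(1,x)$, which by (G3) is identically $1$ and so cannot yield surjectivity; the paper evidently intended the $x=1$ slice that you use.
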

\begin{proof} Suppose that $I$ is a $(G,N)$-implication. Then, there is a grouping $G$ and a fuzzy negation $N$ such that $I(x,y)=G(N(x),y)$ for each $x,y\in [0,1]$. However, since $G$ is continuous and $G(N(0),0)=0$, $G(N(0),1)=1$, then for any $y\in [0,1]$ there exists $x\in [0,1]$ such that $I(0,x) = G(1,x)=y$. Therefore, $Ran(I)=[0,1]$.
\end{proof}

\begin{corollary}
	Each crisp $(\mathcal{GO},N)$-implication is not a $(G,N)$-implication.
\end{corollary}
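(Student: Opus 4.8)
The plan is to reduce the statement directly to Proposition \ref{prop-GON-GN-RanI}, whose hypothesis is a condition on the range of the implication. The single key observation is that a crisp implication cannot be surjective onto the whole unit interval, so its range is automatically different from $[0,1]$.

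First I would unwind the definitions. By assumption $I$ is a crisp $(\mathcal{GO},N)$-implication, so in particular it is a $(\mathcal{GO},N)$-implication, and by Proposition \ref{nossaIFOverlap} we have $I \in \mathcal{FI}$. Being crisp means $I(x,y) \in \{0,1\}$ for all $x,y \in [0,1]$, hence $Ran(I) \subseteq \{0,1\}$. Since $\{0,1\}$ is a proper subset of $[0,1]$ (for instance $\tfrac{1}{2} \notin \{0,1\}$), this yields $Ran(I) \neq [0,1]$.

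With these facts in hand, all the hypotheses of Proposition \ref{prop-GON-GN-RanI} are met: $I$ is a fuzzy implication with $Ran(I)\neq[0,1]$ that is a $(\mathcal{GO},N)$-implication. Applying that proposition immediately gives the conclusion that $I$ is not a $(G,N)$-implication, which is exactly the claim. I do not anticipate any genuine obstacle here, since the corollary is an immediate consequence of Proposition \ref{prop-GON-GN-RanI} together with the elementary remark that a crisp implication has range contained in $\{0,1\}$; the only point requiring a line of justification is the strict inclusion $\{0,1\}\subsetneq[0,1]$, which is trivial.
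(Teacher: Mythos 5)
Your proof is correct and matches the paper's intended argument: the corollary is stated immediately after Proposition \ref{prop-GON-GN-RanI} precisely because a crisp implication has range contained in $\{0,1\}\subsetneq[0,1]$, so that proposition applies directly. The paper leaves this justification implicit; you have simply written it out.
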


Let $\mathbb{I}_{\mathcal{G}\mathbb{O}}^{\mathbb{N}}=\{I\in \mathbb{I}_{\mathcal{G}\mathbb{O}}^{\mathbb{N}} | Ran(I)\neq [0,1]\}$.
Proposition \ref{prop-GON-GN-RanI} proves that $\mathbb{I}_{\mathcal{G}\mathbb{O}}^{\mathbb{N}} \cap \mathbb{I}_{\mathbb{G,N}}=\emptyset$.

%

Thus, there are $(\mathcal{GO},N)$-implications that are not $(G,N)$-implications and therefore,
the class of $(\mathcal{GO}, N)$-implications 
is not contained in the class of $(G,N)$-implications. But the converse also holds as shown in the next proposition.

\begin{proposition}
	There are $(G,N)$-implications that are not $(\mathcal{GO},N)$-implications.
\end{proposition}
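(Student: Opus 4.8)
The plan is to produce an explicit $(G,N)$-implication built from a non-strict negation and then to show, by contradiction, that it admits no representation of the form $N'(\mathcal{GO}(x,N'(y)))$ for any bivariate general overlap function $\mathcal{GO}$ and any fuzzy negation $N'$.

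First I would take $G=\max$, which is clearly a grouping function, together with the greatest fuzzy negation $N_\top$, and compute the resulting $(G,N)$-implication explicitly as
\[
I(x,y)=\max(N_\top(x),y)=\begin{cases} 1, & x<1,\\ y, & x=1.\end{cases}
\]
A direct check shows $I\in\mathcal{FI}$. The decisive feature is that $I(1,y)=y$ for every $y$, i.e. $I$ satisfies (NP); in particular $Ran(I)=[0,1]$, so the range obstruction of Proposition~\ref{prop-GON-GN-RanI} does not apply and a different mechanism must be found.

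Next I would assume for contradiction that $I=I_{\mathcal{GO}}^{N'}$ for some general overlap function $\mathcal{GO}$ and some negation $N'$, and extract rigidity of $N'$ from the identity $N'(\mathcal{GO}(1,N'(y)))=I(1,y)=y$. Since the right-hand side runs over all of $[0,1]$, the map $y\mapsto N'(\mathcal{GO}(1,N'(y)))$ equals the identity. This forces $N'$ to be surjective, hence continuous (an antitone surjection of $[0,1]$ onto itself can have no jumps), and injective (if $N'(y_1)=N'(y_2)$, applying the identity map gives $y_1=y_2$); therefore $N'$ is strict.

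Finally, with $N'$ strict I would evaluate the natural negation: $N_I(x)=I(x,0)=N'(\mathcal{GO}(x,1))$. Since by construction $N_I=N_\top$, strictness of $N'$ gives $\mathcal{GO}(x,1)=(N')^{-1}(1)=0$ for every $x<1$, whereas $(\mathcal{GO}3)$ yields $\mathcal{GO}(1,1)=1$. This is a jump at $x=1$, contradicting the continuity $(\mathcal{GO}5)$ of $\mathcal{GO}$, and so $I$ is not a $(\mathcal{GO},N)$-implication. The main obstacle, and the crux of the argument, is that $N'$ is not prescribed in advance and must be ruled out for every possible choice: the key observation is that the left neutrality $I(1,y)=y$ rigidly forces any admissible $N'$ to be strict, after which the non-strict natural negation $N_\top$ becomes incompatible with the continuity of any general overlap function along the segment $\{(x,1):x\in[0,1]\}$.
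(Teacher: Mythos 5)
Your proof is correct, and it starts from exactly the same counterexample as the paper: the $(G,N)$-implication $I_{\max,N_\top}$, which equals $y$ when $x=1$ and $1$ when $x<1$. Where you diverge is in how the contradiction is extracted from a hypothetical representation $I=N'(\mathcal{GO}(x,N'(y)))$. The paper stays entirely order-theoretic: from the two identities $N'(\mathcal{GO}(1,N'(y)))=y$ and $N'(\mathcal{GO}(1,x))=1$ for $x<1$ it runs a two-case analysis on whether $N'(y)=1$ or $N'(y)<1$ for a fixed $y\in(0,1)$, obtaining $y=0$ in the first case and $y=1$ in the second; it never needs to determine the structure of $N'$ and never invokes continuity of $\mathcal{GO}$. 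You instead first prove a structural rigidity statement --- that the left-neutrality identity forces $N'$ to be a surjective, injective, hence strict negation --- and then play the strictness of $N'$ against the natural negation $N_I=N_\top$ to conclude $\mathcal{GO}(x,1)=0$ for all $x<1$ while $\mathcal{GO}(1,1)=1$, contradicting $(\mathcal{GO}5)$. Each step (surjectivity of $N'$ from the identity being onto, continuity of an antitone surjection of $[0,1]$, injectivity by composing with the identity map, and $(N')^{-1}(1)=0$) is sound. The paper's route is shorter and works even for a discontinuous candidate conjunction, since it uses only $(\mathcal{GO}3)$ and monotonicity; your route is slightly longer but isolates a reusable fact --- that any $(\mathcal{GO},N)$-representation of an implication satisfying (NP) must employ a strict negation --- which is of independent interest and connects naturally to the paper's Proposition~\ref{prop-neutral-e}.
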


\begin{proof}
	Take the $(G,N)$-implication $I_{G, N}$, where  $G(x,y) = \max( x,y)$ and $N = N_{\top}$. Thus,
	\begin{eqnarray*}
		I_{G, N}(x,y) &=& \max(N_{\top}(x), y) =  \begin{cases}
			\max(0,y), & \text{ if }  x = 1 \\
			1, & \text{ if }  x < 1
		\end{cases} \\
		&=& \begin{cases}
		y, & \text{ if }  x = 1 \\
		1, & \text{ if }  x < 1
	\end{cases}.
\end{eqnarray*}
Suppose there exists a general overlap function $\mathcal{GO}$ and a fuzzy negation $N$ such that $$I_{\mathcal{GO}}^{N}(x,y) = \begin{cases}
y, & \text{ if }  x = 1 \\
1, & \text{ if }  x < 1.
\end{cases}$$ Thus, for $x = 1$, $I_{\mathcal{GO}}^{N}(1,y) = y$, for all $y \in [0,1]$,
\begin{eqnarray}\label{cond.1}
N(\mathcal{GO}(1,N(y))) = y
\end{eqnarray}
And, for $x < 1$, $I_{\mathcal{GO}}^{N}(x,y) = 1$, for all $y \in [0,1]$. So, in particular, for $y = 0$, since $\mathcal{GO}$ is commutative,
\begin{eqnarray}\label{cond.2}
N(\mathcal{GO}(1,x)) = 1
\end{eqnarray}
for all $x < 1$.  Now, given $y \in (0,1)$ we have that $N(y) = 1$ or $N(y) < 1$. If $N(y) = 1$ then, by $(\ref{cond.1})$ and $(\mathcal{GO}3)$, $y = N(\mathcal{GO}(1,N(y))) = N(\mathcal{GO}(1, 1)) = N(1) = 0$, which is a contradiction, since $y \in (0,1)$. And, if $N(y) < 1$ then $y \stackrel{\text{Eq.}(\ref{cond.1})}{=} N(\mathcal{GO}(1,N(y))) \stackrel{\text{Eq.}(\ref{cond.2})}{=} 1,$ which is a contradiction, since $y \in (0,1)$. In both cases we have a contradiction, so $I_{\max, N_{\top}}$ is not an $(\mathcal{GO},N)$ -implication.
\end{proof}

The last results ensure that $\mathbb{I}_{\mathcal{G}\mathbb{O}}^{\mathbb{N}} \nsubseteq \mathbb{I}_{\mathbb{G,N}}$ and $\mathbb{I}_{\mathbb{G,N}} \nsubseteq \mathbb{I}_{\mathcal{G}\mathbb{O}}^{\mathbb{N}}$. 

\subsection{Intersections between $(\mathcal{GO},N)$ and $QL$-implications}

Recall that a $QL$-operator built from a tuple $( O ,G ,N )$, where $O$ is an overlap function, $G$ is a grouping function and $N$ is a fuzzy negation, is an implication function if and only if $N=N_\top$, as seen in  \cite{Dimuro2017-ql}. Then, we conclude that

\begin{proposition} \label{rel.QL_ON}
	There are no fuzzy implications that are simultaneously $QL$ implication functions and $(\mathcal{GO},N)$-impli- \ cation functions.
\end{proposition}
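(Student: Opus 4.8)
The plan is to assume, for contradiction, that some fuzzy implication $I$ lies in both classes, and then to pin down the negation forced by the $(\mathcal{GO},N)$-representation via the natural negation $N_I$. First I would record the shape of a $QL$-implication: since the recalled result forces $N=N_\top$ for a $QL$-operator to be an implication, we have $I(x,y)=1$ for every $x<1$ and all $y$. Its natural negation is therefore $N_I(x)=I(x,0)$, which equals $G(0,O(1,0))=G(0,0)=0$ when $x=1$ (using \ref{O2} and \ref{G2}) and equals $1$ when $x<1$; that is, $N_I=N_\top$.

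Next, writing $I=I_{\mathcal{GO}}^{N}$, I would compute $N_I(x)=I_{\mathcal{GO}}^{N}(x,0)=N(\mathcal{GO}(x,N(0)))=N(\mathcal{GO}(x,1))$ and set $g(x)=\mathcal{GO}(x,1)$. By ($\mathcal{GO}2$), ($\mathcal{GO}3$), ($\mathcal{GO}4$) and ($\mathcal{GO}5$), the map $g$ is continuous and increasing with $g(0)=0$ and $g(1)=1$; hence, by the intermediate value theorem, every $v\in[0,1)$ is of the form $g(x_0)$ for some $x_0<1$ (note $x_0\neq 1$ since $g(1)=1\neq v$). Equating the two expressions for $N_I$ gives $N(g(x_0))=N_\top(x_0)=1$, so $N(v)=1$ for all $v\in[0,1)$; together with $N(1)=0$ this forces $N=N_\top$.

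Finally I would exploit that $N_\top$ is crisp. By Remark \ref{rem-Crisp-GON-imp}, $I_{\mathcal{GO}}^{N}$ is crisp exactly when $N$ is crisp, so $I=I_{\mathcal{GO}}^{N_\top}$ takes values only in $\{0,1\}$. This clashes with the $QL$-side: the map $y\mapsto I(1,y)=G(0,O(1,y))$ is continuous (a composition of the continuous functions $O(1,\cdot)$ and $G(0,\cdot)$, by \ref{O5} and \ref{G5}), with value $G(0,0)=0$ at $y=0$ and $G(0,1)=1$ at $y=1$; by the intermediate value theorem it attains values in $(0,1)$, so $I$ is not crisp. This contradiction shows that no fuzzy implication can belong to both families.

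The main obstacle is the step forcing $N=N_\top$: a careless reading would only establish $N=1$ on the finitely sampled range values that happen to appear, whereas what is needed is $N\equiv 1$ on the entire interval $[0,1)$. This is secured precisely by the continuity and monotonicity of $g(x)=\mathcal{GO}(x,1)$ together with the intermediate value theorem, which guarantee that $g$ hits every $v\in[0,1)$ at some argument strictly below $1$. Once $N$ is identified as the crisp negation $N_\top$, the crisp-versus-continuous clash between the two representations closes the argument.
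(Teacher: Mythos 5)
Your proof is correct, but it follows a genuinely different route from the paper's. The paper settles the claim in two lines: by Proposition \ref{Prop.(O,N)}(i) every $(\mathcal{GO},N)$-implication satisfies L-CP$(N)$ (an immediate consequence of commutativity ($\mathcal{GO}1$)), while Theorem 3.1(v) of \cite{Dimuro2017-ql} states that no $QL$-implication satisfies the law of left contraposition for any negation, so the two families cannot meet. You instead compare the natural negations coming from the two representations: the $QL$-form gives $N_I=N_{\top}$, and the continuity, monotonicity and boundary values of $x\mapsto\mathcal{GO}(x,1)$ make it surjective onto $[0,1]$, which lets you pull $N_I=N_{\top}$ back to the generating negation and conclude $N=N_{\top}$; then crispness of $I_{\mathcal{GO}}^{N_{\top}}$ (Remark \ref{rem-Crisp-GON-imp}) collides with the intermediate value theorem applied to the continuous map $y\mapsto G(0,O(1,y))$, which must attain values in $(0,1)$. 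Every step checks out, including the point you flag yourself: the IVT argument is exactly what upgrades ``$N=1$ on some sampled values'' to $N\equiv 1$ on all of $[0,1)$. The trade-off is clear: the paper's proof is essentially a one-liner but leans on an external theorem about the failure of left contraposition for $QL$-implications, whereas yours is longer but self-contained, needing only the definitions, the recalled fact that a $QL$-operator is an implication only when $N=N_{\top}$, and elementary continuity.
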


\begin{proof}
	Indeed, by Proposition \ref{Prop.(O,N)}(i) any $(\mathcal{GO},N)$-implication function $I_{\mathcal{GO}}^{N}$ satisfies $L-CP(N)$, moreover by Theorem 3.1(v) in \cite{Dimuro2017-ql}, any $QL$-implication $I_{O,G,N_{\top}}$ does not satisfy $L-CP$ for any negation $N$.	
\end{proof}

\begin{corollary}
	There is no fuzzy implication function which is simultaneously a $QL$-implication function and a strict $(G,N)$-implication function.
\end{corollary}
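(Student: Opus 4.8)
The plan is to realize every strict $(G,N)$-implication as a member of the $(\mathcal{GO},N)$ family and then invoke the disjointness already secured in Proposition~\ref{rel.QL_ON}. So I would argue by contradiction: suppose some fuzzy implication $I$ were simultaneously a $QL$-implication function and a strict $(G,N)$-implication function.

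First I would apply Corollary~\ref{dualidade(O,N)(G,N)} in the direction ``strict $(G,N)$ $\Rightarrow$ strict $(\mathcal{GO},N)$''. Since $I$ is assumed to be a strict $(G,N)$-implication, that corollary guarantees the existence of a bivariate general overlap function $\mathcal{GO}$ satisfying ($\mathcal{GO}$2a) and ($\mathcal{GO}$3a), together with a strict negation, such that $I$ is the associated strict $(\mathcal{GO},N)$-implication. In particular, $I$ is a $(\mathcal{GO},N)$-implication, i.e. $I \in \mathbb{I}_{\mathcal{G}\mathbb{O}}^{\mathbb{N}}$. Next I would invoke Proposition~\ref{rel.QL_ON}, which states that no fuzzy implication can be at once a $QL$-implication and a $(\mathcal{GO},N)$-implication. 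Since $I$ was taken to be a $QL$-implication and has just been exhibited as a $(\mathcal{GO},N)$-implication, this contradicts Proposition~\ref{rel.QL_ON}, and the corollary follows.

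The only point meriting attention — and it is a mild one — is that one must use precisely the implication supplied by Corollary~\ref{dualidade(O,N)(G,N)} (ultimately Proposition~\ref{prop-eq-gn-impl2}) that carries a strict $(G,N)$-implication \emph{into} the $(\mathcal{GO},N)$ class, so that strictness of the $(G,N)$-implication is exactly what delivers membership in $\mathbb{I}_{\mathcal{G}\mathbb{O}}^{\mathbb{N}}$; the converse direction is not needed here. Beyond this bookkeeping there is no genuine obstacle: the statement is an immediate composition of the two cited results, and no further computation is required.
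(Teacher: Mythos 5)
Your argument is correct and is exactly the paper's proof: the authors also derive this corollary directly from Corollary~\ref{dualidade(O,N)(G,N)} (using the direction that a strict $(G,N)$-implication is a strict $(\mathcal{GO},N)$-implication) combined with Proposition~\ref{rel.QL_ON}. Your additional remark about which direction of the equivalence is needed is accurate but does not change the substance.
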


\begin{proof}Straightforward from Corollary \ref{dualidade(O,N)(G,N)} and Proposition \ref{rel.QL_ON}.
\end{proof}

Therefore, one can conclude that the intersection of $QL$- and $(\mathcal{GO},N)$-implications is empty, i.e.
$\mathbb{I}_{\mathcal{G}\mathbb{O}}^{\mathbb{N}} \cap \mathbb{I}_{\mathbb{O,G,N}} = \emptyset$.
As a consequence, the intersection of  $QL$-implication functions and $(G,N)$-implication functions with $N$ being a strict negation, is also empty:
$\mathbb{I}_{\mathbb{O,G,N}} \cap \mathbb{I}_{\mathbb{G,N^{*}}} = \emptyset$.

\subsection{Intersections between $(\mathcal{GO},N)$ and $R_{O}$-implications}

\begin{proposition} \label{rel.RO_ON}
	There are no fuzzy implication functions  that are simultaneously $R_O$ and $(\mathcal{GO},N)$-implications.
\end{proposition}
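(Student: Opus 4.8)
The plan is to argue by contradiction, using the \emph{natural negation} $N_I$ as a common invariant of the two families. Suppose some $I \in \mathcal{FI}$ were simultaneously an $R_O$-implication $I_O$, built from an overlap function $O$, and a $(\mathcal{GO},N)$-implication $I_{\mathcal{GO}}^{N}$, built from a general overlap function $\mathcal{GO}$ and a negation $N$. First I would compute $N_{I_O}$. From $I_O(x,0)=\max\{z\in[0,1]\mid O(x,z)\le 0\}=\max\{z\mid O(x,z)=0\}$ together with \ref{O2}, one has $O(x,z)=0$ iff $x=0$ or $z=0$; hence $N_{I_O}(x)=0$ for $x>0$ and $N_{I_O}(0)=1$, that is, $N_{I_O}=N_{\bot}$, a \emph{crisp} negation.

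Next I would show that $I_O$ is \emph{not} crisp, which is the decisive quantitative input. Fixing $y\in(0,1)$ and considering $I_O(1,y)=\max\{z\mid O(1,z)\le y\}$, I note that $O(1,\cdot)$ is continuous (\ref{O5}) and increasing (\ref{O4}) with $O(1,0)=0$ (by \ref{O2}) and $O(1,1)=1$ (by \ref{O3}). Hence $\{z\mid O(1,z)\le y\}$ is a closed interval $[0,z^{\ast}]$ with $0<z^{\ast}<1$, so $I_O(1,y)\in(0,1)$ and $I$ takes values outside $\{0,1\}$.

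On the $(\mathcal{GO},N)$ side, the natural negation is $N_{I_{\mathcal{GO}}^{N}}(x)=I_{\mathcal{GO}}^{N}(x,0)=N(\mathcal{GO}(x,N(0)))=N(\mathcal{GO}(x,1))$. Equating the two descriptions of $N_I$ forces $N(\mathcal{GO}(x,1))=N_{\bot}(x)\in\{0,1\}$ for every $x\in[0,1]$. Here I would exploit that $x\mapsto\mathcal{GO}(x,1)$ is continuous (by ($\mathcal{GO}$5)) with $\mathcal{GO}(0,1)=0$ (by ($\mathcal{GO}$2)) and $\mathcal{GO}(1,1)=1$ (by ($\mathcal{GO}$3)), so by the intermediate value theorem it maps $[0,1]$ \emph{onto} $[0,1]$. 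Consequently, for every $t\in[0,1]$ there is $x$ with $\mathcal{GO}(x,1)=t$, whence $N(t)=N(\mathcal{GO}(x,1))\in\{0,1\}$; that is, $N$ is crisp. By Remark \ref{rem-Crisp-GON-imp} this would make $I=I_{\mathcal{GO}}^{N}$ crisp, contradicting the non-crispness established above.

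The main obstacle is not any single computation but recognising the right invariant: both constructions are governed by $N_I$, and the residual form rigidly pins it to the crisp negation $N_{\bot}$ while keeping $I$ itself non-crisp — an incompatibility a genuine $(\mathcal{GO},N)$-implication cannot absorb, since a crisp induced negation would force the entire implication to be crisp. The only places needing care are justifying that $\mathcal{GO}(\cdot,1)$ is surjective and that $I_O(1,\cdot)$ lands strictly inside $(0,1)$; both reduce to the intermediate value theorem combined with the boundary conditions \ref{O2}, \ref{O3} and ($\mathcal{GO}$2), ($\mathcal{GO}$3).
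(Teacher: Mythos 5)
Your proof is correct, but it follows a genuinely different route from the paper's. The paper disposes of the statement in one line: by Proposition \ref{Prop.(O,N)}(i) every $(\mathcal{GO},N)$-implication satisfies L-CP($N$), while by an external result (Theorem 4.2 of the $R_O$-implication paper) no $R_O$-implication satisfies L-CP for any negation, so the two families cannot meet. You instead use the natural negation $N_I$ as the common invariant: you compute $N_{I_O}=N_{\bot}$ directly from \ref{O2}, observe via the intermediate value theorem that $I_O(1,\cdot)$ takes values in $(0,1)$ so $I_O$ is not crisp, then show that equality of the induced negations together with the surjectivity of $x\mapsto\mathcal{GO}(x,1)$ (again by continuity, ($\mathcal{GO}$2) and ($\mathcal{GO}$3)) forces $N$ to be crisp, whence $I_{\mathcal{GO}}^{N}$ would be crisp --- a contradiction. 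Each step checks out: the set $\{z\mid O(1,z)\le y\}$ is indeed a closed initial interval $[0,z^{\ast}]$ with $0<z^{\ast}<1$ by \ref{O2}--\ref{O5}, and the direction of Remark \ref{rem-Crisp-GON-imp} you invoke ($N$ crisp implies $I_{\mathcal{GO}}^{N}$ crisp) is the trivial one, since the range of $I_{\mathcal{GO}}^{N}$ is contained in the range of $N$. What your argument buys is self-containment: it does not rely on the imported fact about L-CP failing for residual implications, only on the boundary and continuity axioms of the two constructions. What the paper's argument buys is brevity and reuse of an already-established structural property. Both are valid proofs of the proposition.
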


\begin{proof}
	Indeed, by Proposition \ref{Prop.(O,N)}(i), any $(\mathcal{GO},N)$-implication $I_{\mathcal{GO}}^{N}$ satisfies $L-CP(N)$, however  in \cite[Theorem 4.2]{Dimuro2015} we see that every $R_O$-implication, $I_{O}$, does not satisfy $L-CP$ for any negation $N$.	
\end{proof}

Therefore, one can conclude that $(\mathcal{GO},N)$- and $R_{O}$-impli- \ cations do not intercept, i.e. $\mathbb{I}_{\mathcal{G}\mathbb{O}}^{\mathbb{N}} \cap \mathbb{I}_{\mathbb{O}} = \emptyset$.

\begin{figure}
	\begin{center}
		\caption{Intersections between families of fuzzy implication functions.}
		\label{fig-intersections}
		\begin{minipage}{1.3\textwidth}
			{\scriptsize	\begin{itemize}[label=$\ast$,leftmargin=1cm]
					\item $\mathbb{I}_{\mathcal{G}\mathbb{O}}^{\mathbb{N}}$ is a $(\mathcal{GO},N)$-implication, \qquad 	$\ast$ $\mathbb{I}_{\mathbb{O}}$ is an $R_{O}$-implication, 
					\item  $\mathbb{I}_{\mathbb{G,N}}$ is a $(G,N)$-implication, 	\qquad \;  $\ast$ $\mathbb{I}_{\mathbb{D}}$ is a $D$-implication.
					\item $\mathbb{I}_{\mathbb{O,G,N}}$ is a $QL$-implication,  		
				\end{itemize} }
			\end{minipage}
			\begin{tikzpicture}
			\draw \secondcircle node [above=1.3cm,label={[above=0.3cm]}] {$\mathbb{I}_{\mathbb{G},\mathbb{N}}$};
			
			\draw(0:4cm) node[ellipse,minimum height=2.3cm,minimum width=2.3cm,label={[above=1.8cm]270:$\mathbb{I}_{\mathbb{O},\mathbb{G},\mathbb{N}}$},draw] {};
			
			\draw \fourcircle node [] {$\mathbb{I}_{\mathbb{D}}$};

			\draw (1cm,0) node[ellipse,minimum height=2.5cm,minimum width=2.5cm,label={[above=1.8cm]270:$\mathbb{I}_{\mathcal{G}\mathbb{O}}^{\mathbb{N}}$},draw] {};
			
			\draw \thirdcircle node [] {$\mathbb{I}_{\mathbb{O}}$};
			
			%
			%
			\end{tikzpicture}
		\end{center}
	\end{figure}
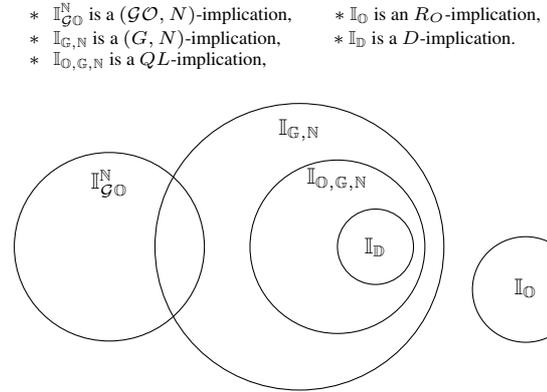

	\subsection{Intersections between $(\mathcal{GO},N)$ and $D$-implications}
	
	From the results given in \cite[Theorem 4.1]{d-impl} we know that every $D$-implication function is a $QL$-operation considering the greatest fuzzy negation.  Still, from \cite[Theorem 4.2]{d-impl} we know that every $D$-implication is a $(G,N)$-implication considering the greatest fuzzy negation. Therefore, it is straightforward that there are no intersections between $(\mathcal{GO},N)$ implication functions and $D$-implication / $(G,N)$-implication functions. Moreover, from \cite[Theorem 4.3]{d-impl} one can say that there is no intersection between $(\mathcal{GO},N)$ implication functions and $D$-implications.
	
	In Figure \ref{fig-intersections}, we illustrate the main results presented in this section. Note that the intersections between the families of $(G,N)$, $QL$, $R_O$ and $D$-implications had already been presented in \cite{Dimuro2015,DBS14,Dimuro2017-ql,dimuro2019law,d-impl}.

\section{Final Remarks}

In propositional logics, it is only necessary to consider the negation ($\neg$) and other  logical connective as primitive, either the implication ($\rightarrow$), the disjunction ($\vee$) or the conjunction ($\wedge$), since the other connectives can be defined in a standard form using only two primitive connectives, \cite{Mendelson15}. In particular, when the primitive connectives are the negation and the disjunction, the standard definition of the implication is given by Equation (\ref{eq-taut3a}) and when the primitive connectives are the negation and the conjunction, the standard definition of the implication is given by Equation (\ref{eq-taut3}). The first one, in fuzzy logics, had motived the introduction of several classes of fuzzy implication functions, such as the  $(S,N)$, $(G,N)$ and $(A,N)$ implications, where the disjunction is given, respectively, by a t-conorm $S$, a grouping function $G$ or a disjunctive aggregation function $A$ (e.g. see \cite{BJ07,DBS14,Pradera16}).  The second one, yielded implication functions defined by means of t-norms, e.g. see \cite{Bedregal07,PBSS18}. In this work we introduced a class of implication function based on this second standard definition  of the implication, where the conjunction is given by  generalized overlap functions.

The main contributions of this work are the investigation of properties satisfied by such implication functions and a study of  the intersections between them and other classes of implication functions derived from overlap and grouping functions provided. The summary of these intersections is illustrated in Figure \ref{fig-intersections}. Actually, we complete this study by also considering the class of $(T,N)$-implication functions, denoted by $\mathbb{I}_{\mathbb{T}}^{\mathbb{N}}$, which is also based on the standard definition of the implication given by Equation (\ref{eq-taut3}), but using a t-norm instead of a  general overlap function. Since each continuous t-norm is a general overlap function but the converse does not hold, then trivially we have that: $\mathbb{I}_{\mathcal{G}\mathbb{O}}^{\mathbb{N}}\cap \mathbb{I}_{\mathbb{T}}^{\mathbb{N}}\neq \emptyset$, $\mathbb{I}_{\mathcal{G}\mathbb{O}}^{\mathbb{N}}- \mathbb{I}_{\mathbb{T}}^{\mathbb{N}}\neq \emptyset$ and $\mathbb{I}_{\mathbb{T}}^{\mathbb{N}}- \mathbb{I}_{\mathcal{G}\mathbb{O}}^{\mathbb{N}}\neq \emptyset$. In addition, Table \ref{tab-comparison} shows some of the properties satisfied by the  
$(\mathcal{GO},N)$-implication functions and $(T,N)$-implication functions whenever we take into account: any fuzzy negation $N$, strong fuzzy negations (represented by $\mathbb{N}^*$), non-strong fuzzy negations (represented by $\mathbb{N}^+$) or crisp negations (represented by $\mathbb{N}_c$). For each property, yes/no means that the property is/is not held for each implication of that class. Additional restrictions may appear as footnotes like: no\footnote{If $N$ is strict.}, yes\footnote{1 is the neutral element of $\mathcal{GO}$.} or yes\footnote{$\mathcal{GO}$ is associative.}. Empty table cells mean that some implication functions of the class satisfy the property  whereas others do not. We can notice that indeed $\mathcal{GO}$-implication functions are more general since more properties are verified.

\begin{table}[h!]
	\caption{Some properties of fuzzy implication functions.}
	\label{tab-comparison}
	\begin{center}
		\begin{tabular}{|c|c|c|c|c|c|c|c|c|}
			\hline 
			Property  & $\mathbb{I}_{\mathbb{T}}^{\mathbb{N}^*}=\mathbb{I}_{\mathbb{S,N}^*}$ & $\mathbb{I}_{\mathbb{T}}^{\mathbb{N}^{+}}$ & $\mathbb{I}_{\mathbb{T}}^{\mathbb{N}_c}$  & $\mathbb{I}_{\mathcal{G}\mathbb{O}}^{\mathbb{N}^*}$ & $\mathbb{I}_{\mathcal{G}\mathbb{O}}^{\mathbb{N}_c}$  
			\\  
			\hline 
			EP  & yes	& no$^1$ & yes & yes$^3$ &yes$^2$  \\ 
			\hline 
			NP 	& yes & no & no &  yes$^2$ & no$^2$  \\ 
			\hline 
			ROP 	& &  & no & & no$^2$ \\ 
			\hline 
			LOP	 & &  & yes &  &yes$^2$  \\ 
			\hline 
			CP(N)  & yes	& no$^1$ & yes &  yes& yes$^2$   \\ 
			\hline 
			L-CP(N)  & yes & yes& yes &  yes & yes  \\ 
			\hline 
			R-CP(N) & yes	& no$^1$ & yes &  yes &yes$^2$  \\ 
			\hline 
		\end{tabular} 
	\end{center}
\end{table}

Our future works include studying the use of $\mathcal{GO}$ operators on other classes of implication functions such as the ones given in \cite{Vania-Fuzz18} and  the construction of  other classes of fuzzy subsethood measures like it was made in \cite{Dimuro2017-ql,PBSS18}, which can be used to generate fuzzy entropies, similarity measures and penalty functions, as seen in \cite{Helida2019}, and applied in many ways.

\section*{Acknowledgments}
This study was funded by the Brazilian funding agencies: CNPq (307781 / 2016-0, 305882 / 2016-3, 301618 / 2019-4), and FAPERGS (19 / 2551-0001660-3, 19 / 2551-0001279-9); and the Spanish Ministry of Science and Technology (TIN2016- 77356-P (AEI/FEDER, UE)).

\bibliography{mybiblio}   

\begin{thebibliography}{56}
\providecommand{\natexlab}[1]{#1}
\providecommand{\url}[1]{{#1}}
\providecommand{\urlprefix}{URL }
\expandafter\ifx\csname urlstyle\endcsname\relax
  \providecommand{\doi}[1]{DOI~\discretionary{}{}{}#1}\else
  \providecommand{\doi}{DOI~\discretionary{}{}{}\begingroup
  \urlstyle{rm}\Url}\fi
\providecommand{\eprint}[2][]{\url{#2}}

\bibitem[{Baczy{\'{n}}ski(2013)}]{Bac13}
Baczy{\'{n}}ski M (2013) On the applications of fuzzy implication functions.
  In: Balas VE, Fodor J, V{\'a}rkonyi-K{\'o}czy AR, Dombi J, Jain LC (eds) Soft
  Computing Applications, Springer Berlin Heidelberg, Berlin, Heidelberg, pp
  9--10

\bibitem[{Baczy\'nski and Jayaram(2007)}]{BJ07}
Baczy\'nski M, Jayaram B (2007) On the characterizations of
  {(S,N)}-implications. Fuzzy Sets and Systems 158(15):1713--1727

\bibitem[{Baczy\'nski and Jayaram(2008)}]{BJ08}
Baczy\'nski M, Jayaram B (2008) Fuzzy Implications, Studies in Fuzziness and
  Soft Computing, vol 231. Springer

\bibitem[{Baczy\'nski et~al.(2013)Baczy\'nski, Beliakov, Bustince, and
  Pradera}]{BBBP13}
Baczy\'nski M, Beliakov G, Bustince H, Pradera A (2013) Advances in Fuzzy
  Implications Functions, Studies in Fuzziness and Soft Computing, vol 300.
  Springer-Verlag Berlin Heidelberg

\bibitem[{Baczy\'nski et~al.(2015)Baczy\'nski, Jayaram, Massanet, and
  Torrens}]{BJ2015}
Baczy\'nski M, Jayaram B, Massanet S, Torrens J (2015) Fuzzy implications:
  Past, present, and future. In: Kacprzyk J, Pedrycz W (eds) Springer Handbook
  of Computational Intelligence, Springer Berlin Heidelberg, Berlin,
  Heidelberg, pp 183--202

\bibitem[{Bedregal(2007)}]{Bedregal07}
Bedregal BC (2007) A normal form which preserves tautologies and contradictions
  in a class of fuzzy logics. Journal of Algorithms 62(3):135 -- 147

\bibitem[{Bedregal(2010)}]{Bedregal08}
Bedregal BC (2010) On interval fuzzy negations. Fuzzy Sets and Systems
  161(17):2290--2313

\bibitem[{Bedregal et~al.(2013)Bedregal, Dimuro, Bustince, and
  Barrenechea}]{BDBB13}
Bedregal BC, Dimuro GP, Bustince H, Barrenechea E (2013) New results on overlap
  and grouping functions. Information Sciences 249:148--170

\bibitem[{Beliakov et~al.(2007)Beliakov, Pradera, and
  Calvo}]{Beliakov:Aggregations}
Beliakov G, Pradera A, Calvo T (2007) Aggregation Functions: A Guide for
  Practitioners. Springer, Berlin

\bibitem[{Bloch(2009)}]{Bloch09}
Bloch I (2009) Duality vs. adjunction for fuzzy mathematical morphology and
  general form of fuzzy erosions and dilations. Fuzzy Sets and Systems
  160(13):1858 -- 1867

\bibitem[{Bustince et~al.(2010)Bustince, Fernandez, Mesiar, Montero, and
  Orduna}]{Bus10a}
Bustince H, Fernandez J, Mesiar R, Montero J, Orduna R (2010) Overlap
  functions. Nonlinear Analysis: Theory, Methods \& Applications
  72(3-4):1488--1499

\bibitem[{Bustince et~al.(2012)Bustince, Pagola, Mesiar, Hullermeier, and
  Herrera}]{BPMHF12}
Bustince H, Pagola M, Mesiar R, Hullermeier E, Herrera F (2012) Grouping,
  overlap, and generalized bientropic functions for fuzzy modeling of pairwise
  comparisons. IEEE Transactions on Fuzzy Systems 20(3):405--415

\bibitem[{Bustince et~al.(2013)Bustince, Fernandez, Sanz, Baczy\'nski, and
  Mesiar}]{BFSBM13}
Bustince H, Fernandez J, Sanz J, Baczy\'nski M, Mesiar R (2013) Construction of
  strong equality index from implication operators. Fuzzy Sets and Systems
  211:15--33

\bibitem[{Carbonell and Torrens(2010)}]{carbonell10}
Carbonell M, Torrens J (2010) Continuous {R}-implications generated from
  representable aggregation functions. Fuzzy Sets and Systems 161(17):2276 --
  2289, theme: Aggregations, Connectives and Non-Additive Measures

\bibitem[{Cruz et~al.(2018)Cruz, Bedregal, and Santiago}]{CBS18}
Cruz A, Bedregal B, Santiago R (2018) On the characterizations of fuzzy
  implications satisfying {I(x,I(y,z))=I(I(x,y),I(x,z))}. International Journal
  of Approximate Reasoning 93:261 -- 276

\bibitem[{De~Miguel et~al.(2019)De~Miguel, G\'omez, Rodr\'iguez, Montero,
  Bustince, Dimuro, and Sanz}]{DeMiguel2019}
De~Miguel L, G\'omez D, Rodr\'iguez JT, Montero J, Bustince H, Dimuro GP, Sanz
  JA (2019) General overlap functions. Fuzzy Sets and Systems 372:81--96

\bibitem[{Dimuro and Bedregal(2014)}]{Dimuro201439}
Dimuro GP, Bedregal B (2014) Archimedean overlap functions: The ordinal sum and
  the cancellation, idempotency and limiting properties. Fuzzy Sets and Systems
  252:39 -- 54

\bibitem[{Dimuro and Bedregal(2015)}]{Dimuro2015}
Dimuro GP, Bedregal B (2015) On residual implications derived from overlap
  functions. Information Sciences 312:78 -- 88

\bibitem[{Dimuro et~al.(2014{\natexlab{a}})Dimuro, Bedregal, Bustince, Mesiar,
  and Asiain}]{DimuroIPMU}
Dimuro GP, Bedregal B, Bustince H, Mesiar R, Asiain MJ (2014{\natexlab{a}}) On
  additive generators of grouping functions. In: Laurent A, Strauss O,
  Bouchon-Meunier B, Yager RR (eds) Information Processing and Management of
  Uncertainty in Knowledge-Based Systems, Communications in Computer and
  Information Science, vol 444, Springer International Publishing, pp 252--261

\bibitem[{Dimuro et~al.(2014{\natexlab{b}})Dimuro, Bedregal, and
  Santiago}]{DBS14}
Dimuro GP, Bedregal B, Santiago RHN (2014{\natexlab{b}}) On
  {$(G,N)$}-implications derived from grouping functions. Information Sciences
  279:1 -- 17

\bibitem[{Dimuro et~al.(2016)Dimuro, Bedregal, Bustince, Asi\'ain, and
  Mesiar}]{additive-generators-FSS}
Dimuro GP, Bedregal B, Bustince H, Asi\'ain MJ, Mesiar R (2016) On additive
  generators of overlap functions. Fuzzy Sets and Systems 287:76 -- 96, theme:
  Aggregation Operations

\bibitem[{Dimuro et~al.(2017)Dimuro, Bedregal, Bustince, Jurio, Baczy{\' n}ski,
  and Mi{\' s}}]{Dimuro2017-ql}
Dimuro GP, Bedregal B, Bustince H, Jurio A, Baczy{\' n}ski M, Mi{\' s} K (2017)
  {QL}-operations and {QL}-implication functions constructed from tuples
  {$(O,G,N)$} and the generation of fuzzy subsethood and entropy measures.
  International Journal of Approximate Reasoning 82:170 -- 192

\bibitem[{Dimuro et~al.(2019{\natexlab{a}})Dimuro, Bedregal, Fernandez,
  Sesma-Sara, Pintor, and Bustince}]{dimuro2019law}
Dimuro GP, Bedregal B, Fernandez J, Sesma-Sara M, Pintor JM, Bustince H
  (2019{\natexlab{a}}) The law of {O}-conditionality for fuzzy implications
  constructed from overlap and grouping functions. International Journal of
  Approximate Reasoning 105:27--48

\bibitem[{Dimuro et~al.(2019{\natexlab{b}})Dimuro, Santos, Bedregal, Borges,
  Palmeira, Fernandez, and Bustince}]{d-impl}
Dimuro GP, Santos H, Bedregal B, Borges EN, Palmeira E, Fernandez J, Bustince H
  (2019{\natexlab{b}}) On {D}-implications derived by grouping functions. In:
  {FUZZ-IEEE} 2019, {IEEE} International Conference on Fuzzy Systems,
  Proceedings, IEEE, Los Alamitos, pp 61--66

\bibitem[{Dolati et~al.(2013)Dolati, S\'{a}nchez, and \'{U}beda
  Flores}]{Dolati13}
Dolati A, S\'{a}nchez JF, \'{U}beda Flores M (2013) A copula-based family of
  fuzzy implication operators. Fuzzy Sets and Systems 211:55 -- 61, theme:
  Aggregation operators

\bibitem[{Dubois and Prade(1984)}]{DP84}
Dubois D, Prade H (1984) A theorem on implication functions defined from
  triangular norms. Stochastica 8(3):267--279

\bibitem[{Fodor and Roubens(1994)}]{fodor1994}
Fodor J, Roubens M (1994) Fuzzy preference modelling and multicriteria decision
  support. Kluwer, New York

\bibitem[{G\'omez et~al.(2016)G\'omez, Rodr\'iguez, Montero, Bustince, and
  Barrenechea}]{Gomez201657}
G\'omez D, Rodr\'iguez JT, Montero J, Bustince H, Barrenechea E (2016)
  n-{D}imensional overlap functions. Fuzzy Sets and Systems 287:57 -- 75,
  theme: Aggregation Operations

\bibitem[{Jayaram(2008)}]{jay08}
Jayaram B (2008) On the law of importation $(x \wedge y) \rightarrow z \equiv
  (x \rightarrow (y \rightarrow z))$ in fuzzy logic. IEEE Transactions on Fuzzy
  Systems 16(1):130--144

\bibitem[{Jurio et~al.(2013)Jurio, Bustince, Pagola, Pradera, and
  Yager}]{Jurio201369}
Jurio A, Bustince H, Pagola M, Pradera A, Yager R (2013) Some properties of
  overlap and grouping functions and their application to image thresholding.
  Fuzzy Sets and Syst 229:69--90

\bibitem[{Klement et~al.(2000)Klement, Mesiar, and Pap}]{KMP00}
Klement E, Mesiar R, Pap E (2000) Triangular Norms, Trends in Logic -- Studia
  Logica Library, vol~8. Kluwer Academic Publishers, Dordrecht

\bibitem[{Klir and Yuan(1995)}]{Klir95}
Klir G, Yuan B (1995) Fuzzy Sets and Fuzzy Logic: Theory and Applications.
  Prentice Hall PTR

\bibitem[{Liu(2011)}]{Liu2011783}
Liu HW (2011) Two classes of pseudo-triangular norms and fuzzy implications.
  Computers \& Mathematics with Applications 61(4):783 -- 789

\bibitem[{Liu(2012)}]{LIU12}
Liu HW (2012) Semi-uninorms and implications on a complete lattice. Fuzzy Sets
  and Systems 191:72 -- 82, theme: Aggregation

\bibitem[{Mas et~al.(2007{\natexlab{a}})Mas, Monserrat, and Torrens}]{Mas07}
Mas M, Monserrat M, Torrens J (2007{\natexlab{a}}) Two types of implications
  derived from uninorms. Fuzzy Sets and Systems 158(3):2612--2626

\bibitem[{Mas et~al.(2007{\natexlab{b}})Mas, Monserrat, Torrens, and
  Trillas}]{Mas07a}
Mas M, Monserrat M, Torrens J, Trillas E (2007{\natexlab{b}}) A survey on fuzzy
  implication functions. IEEE Transactions on Fuzzy Systems 15(6):1107--1121

\bibitem[{Mendelson(2015)}]{Mendelson15}
Mendelson E (2015) Introduction to Mathematical Logic, 6th edn. Discrete
  Mathematics and Its Applications, Chapman and Hall/CRC, Boca Raton, FL, USA

\bibitem[{Pinheiro et~al.(2017)Pinheiro, Bedregal, Santiago, and
  Santos}]{PBSS17}
Pinheiro J, Bedregal B, Santiago RHN, Santos H (2017) {(T,N)}-implications. In:
  2017 IEEE International Conference on Fuzzy Systems (FUZZ-IEEE), Naples, pp
  1--6

\bibitem[{Pinheiro et~al.(2018{\natexlab{a}})Pinheiro, Bedregal, Santiago,
  Santos, and Dimuro}]{Vania-Nafips18}
Pinheiro J, Bedregal B, Santiago R, Santos H, Dimuro GP (2018{\natexlab{a}})
  {(T,N)}-implications and some functional equations. In: Barreto GA, Coelho R
  (eds) Fuzzy Information Processing, Springer International Publishing, Cham,
  pp 302--313

\bibitem[{Pinheiro et~al.(2018{\natexlab{b}})Pinheiro, Bedregal, Santiago, and
  Santos}]{PBSS18}
Pinheiro J, Bedregal B, Santiago RH, Santos H (2018{\natexlab{b}}) A study of
  {(T,N)}-implications and its use to construct a new class of fuzzy subsethood
  measure. International Journal of Approximate Reasoning 97:1 -- 16

\bibitem[{Pinheiro et~al.(2018{\natexlab{c}})Pinheiro, Bedregal, Santiago, and
  Santos}]{Vania-Fuzz18}
Pinheiro J, Bedregal B, Santiago RHN, Santos H (2018{\natexlab{c}})
  {$(N',T,N)$}-implications. In: 2018 IEEE International Conference on Fuzzy
  Systems (FUZZ-IEEE), Rio de Janeiro, pp 1--6

\bibitem[{Pinheiro et~al.(2018{\natexlab{d}})Pinheiro, Bedregal, Santiago, and
  Santos}]{PBSS18b}
Pinheiro J, Bedregal BRC, Santiago RHN, Santos HS (2018{\natexlab{d}}) Crisp
  fuzzy implications. In: Fuzzy Information Processing - 37th Conf. of the
  North American Fuzzy Information Processing Society, {NAFIPS} 2018,
  Fortaleza, Brazil, 2018, Proceedings, pp 348--360

\bibitem[{Pradera et~al.(2016)Pradera, Beliakov, Bustince, and {De
  Baets}}]{Pradera16}
Pradera A, Beliakov G, Bustince H, {De Baets} B (2016) A review of the
  relationships between implication, negation and aggregation functions from
  the point of view of material implication. Information Sciences 329:357 --
  380, special issue on Discovery Science

\bibitem[{Qiao and Hu(2018{\natexlab{a}})}]{QIAO2018107}
Qiao J, Hu BQ (2018{\natexlab{a}}) The distributive laws of fuzzy implications
  over overlap and grouping functions. Information Sciences 438:107 -- 126

\bibitem[{Qiao and Hu(2018{\natexlab{b}})}]{QIAO20181}
Qiao J, Hu BQ (2018{\natexlab{b}}) On multiplicative generators of overlap and
  grouping functions. Fuzzy Sets and Systems 332:1 -- 24, theme: Aggregation
  and Operators

\bibitem[{{Qiao} and {Hu}(2018)}]{8118195}
{Qiao} J, {Hu} BQ (2018) On the distributive laws of fuzzy implication
  functions over additively generated overlap and grouping functions. IEEE
  Transactions on Fuzzy Systems 26(4):2421--2433

\bibitem[{Qiao and Hu(2018)}]{QIAO20181a}
Qiao J, Hu BQ (2018) On the migrativity of uninorms and nullnorms over overlap
  and grouping functions. Fuzzy Sets and Systems 346:1 -- 54

\bibitem[{Qiao and Hu(2019{\natexlab{a}})}]{QIAO2018}
Qiao J, Hu BQ (2019{\natexlab{a}}) On generalized migrativity property for
  overlap functions. Fuzzy Sets and Systems 357:91--116

\bibitem[{Qiao and Hu(2019{\natexlab{b}})}]{QIAO201958}
Qiao J, Hu BQ (2019{\natexlab{b}}) On homogeneous, quasi-homogeneous and
  pseudo-homogeneous overlap and grouping functions. Fuzzy Sets and Systems
  357:58 -- 90, theme: Aggregation Functions

\bibitem[{Reiser et~al.(2013)Reiser, Bedregal, and Baczy\'nski}]{RBB13}
Reiser RHS, Bedregal BRC, Baczy\'nski M (2013) Aggregating fuzzy implications.
  Information Sciences 253:126--146

\bibitem[{Santos et~al.(2019)Santos, Couso, Bedregal, Tak{\'{a}}c,
  Min{\'{a}}rov{\'{a}}, Asiain, Barrenechea, and Bustince}]{Helida2019}
Santos HS, Couso I, Bedregal BRC, Tak{\'{a}}c Z, Min{\'{a}}rov{\'{a}} M, Asiain
  A, Barrenechea E, Bustince H (2019) Similarity measures, penalty functions,
  and fuzzy entropy from new fuzzy subsethood measures. International Journal
  of Intelligent Systems 34(6):1281--1302

\bibitem[{Trillas(1979)}]{Tri79}
Trillas E (1979) On negation functions in the theory of fuzzy sets. Stochastica
  3(1):47--59

\bibitem[{\v{S}t\v{e}pni\v{c}ka and De~Baets(2013)}]{SdB13}
\v{S}t\v{e}pni\v{c}ka M, De~Baets B (2013) Implication-based models of monotone
  fuzzy rule bases. Fuzzy Sets and Systems 232:134 -- 155, fuzzy Set Theory and
  Applications Selected papers from FSTA 2012

\bibitem[{Wang(2006)}]{WANG06}
Wang Z (2006) Generating pseudo-t-norms and implication operators. Fuzzy Sets
  and Systems 157(3):398 -- 410

\bibitem[{Xie et~al.(2012)Xie, Liu, Qin, and Zeng}]{Xie2012209}
Xie A, Liu H, Qin F, Zeng Z (2012) Solutions to the functional equation
  {$I(x,y)=I(x,I(x,y))$} for three types of fuzzy implications derived from
  uninorms. Information Sciences 186(1):209 -- 221

\bibitem[{Yager(2004)}]{Yager04}
Yager RR (2004) On some new classes of implication operators and their role in
  approximate reasoning. Information Sciences 167(1):193 -- 216

\end{thebibliography}
\bibliographystyle{plain}      

%
%
%

\end{document}